\documentclass[12pt,reqno]{amsart}
\usepackage{amsfonts}
\usepackage[a4paper,inner=1in,outer=1in,vmargin=1in,marginparwidth=1in]{geometry}
\usepackage{amsmath,amsthm,amsfonts,amssymb}
\usepackage{bbm}
\usepackage{cite}
\usepackage{color}
\usepackage{enumerate}

\makeatletter
\@namedef{subjclassname@2020}{\textup{2020} Mathematics Subject Classification}
\makeatother
\def\ad{\text{ad}}
\def\C{\mathbb{C}}
\def\N{\mathbb{N}}
\def\Z{\mathbb{Z}}

\newcommand{\op}{\oplus}
\newcommand{\cd}{\cdot}
\newcommand{\mf}{\mathfrak}
\newcommand{\dis}{\displaystyle}
\newcommand{\bgop}{\bigoplus}
\newcommand{\ot}{\otimes}
\newcommand{\al}{\alpha}
\newcommand{\mcal}{\mathcal}

\newcommand{\ga}{\gamma}

\newcommand{\la}{\lambda}
\newcommand{\La}{\Lambda}
\newcommand{\de}{\delta}

\newcommand{\mbf}{\mathbf}
\newcommand{\ch}{\rm{ch}}
\newtheorem{theorem}{Theorem}[section]
\newtheorem{definition}[theorem]{Definition}
\newtheorem{lemma}[theorem]{Lemma}
\newtheorem{remark}[theorem]{Remark}

\newtheorem{proposition}[theorem]{Proposition}

\newtheorem{example}[theorem]{Example}

\numberwithin{equation}{section}

\begin{document}
	\title{Category $\mcal  O$ for polynomial toroidal algebras and its subalgebras}

	\author[]{Priyanshu Chakraborty}
	\address{Priyanshu Chakraborty: Department of Mathematical Sciences, IISc, Bangalore.}
	\email{priyanshuc437@gmail.com, priyanshuc@iisc.ac.in}
	\subjclass[2020]{17B68, 17B67}
	
	\keywords{Toroidal Lie algebra, Witt algebra, Category O }
	\date{}
	
	\maketitle
	\begin{abstract}
		In this paper we study Category $\mcal O$ for the polynomial toroidal Lie algebras and its $S,H$ type subalgebras. We classify irreducible objects of category $\mcal O$ as unique irreducble quotient of standard modules. Surprisingly, costandard objects of category $\mcal O$ arrises from Shen-Larsson type modules. We determine necessary sufficient conditions for irreducibility of Shen-Larsson modules. Finally appeling structure of Shen-Larsson modules and Soergel Tilting module theory of \cite{Soe}, we compute charcter formulas for irreducible modules and indecomposable Tilting modules  of category $\mcal O$.
	\end{abstract}
	\subsection{General notations}
	\begin{itemize}

		\item Throughout this paper we will work over the base field $\C$ of complex numbers.
		\item {  Let $\Z$,  $\mathbb N $ and $\Z_+$ be denote the set of integers, the set of non-negative integers and the set of positive integers respectively}. For $r \in \N$, $\C^r = \{(a_1, \ldots , a_r) : a_i \in \C, 1 \leq i \leq r\}$ and
		$\mathbb{R}^r, \Z^r, \N^{r}$ and $\Z_{+}^{r}$ are defined similarly.
		
		\item For any Lie algebra $\mathfrak{L}$, $U(\mathfrak{L})$ will denote universal enveloping algebra of $\mathfrak{L}$.

		\item Elements of $\C[t_1,\cdots, t_n]$ will be denoted as $t^r=t_1^{r_1} \cdots t_n^{r_n}$, where $r=(r_1,r_2,\cdots,r_n) \in \N^n$ and $t^r=0$ if $r \notin \N^n$.

	\end{itemize}
	
	\section{Introduction} 
	For a simple Lie algebra $\mf g$ an important class of modules for $\mf g$ was introduced by Bernstein-Gelfand-Gelfand in the 1970s, known as modules of BGG category $\mcal O$. This category contains many intersting modules, including all finite dimensional simple modules. Modules of this category satisfies certain finiteness conditions, and this
	category has many intersting homological properties. One of the main results in the theory of
	category $\mcal O$ was the proof of the Kazhdan–Luzstig conjecture, which gives the composition
	multiplicities of the simple modules occuring in Verma modules in terms of values of certain
	polynomials, called the Kazhdan–Luzstig polynomials. Later, category $\mcal O$
	was defined more general class of Lie algebras, namely those with a triangular
	decomposition, see \cite{RN}. Broadly, category $\mcal O$ is an interesting topic to consider for any Lie algebra and was studied by several authors for various Lie algebras, for instance see \cite{BNW,MC,MS,CT,DSY20}. In this paper we consider category $\mcal O$ for polynomial toroidal Lie algebras.\\
	Let $A_n= \C[t_1, \cdots,t_n]$ be denote the polynomial algebra in n commuting variables. Let $W_n$ be denote the Lie algebra of vector fileds on $\C^n$. It is well known that $W_n$ can be viewed as the Lie algebra of derivations over $A_n$ and this Lie algebra is called as Witt algebra. The Lie algebra $W_n$ posses three important class of subalgebras, namely subalgebra consisting of divergence zero vector fileds ($S_n$), subalgebra consisting of Hamiltonian type vector fields ($H_n$) and the subalgebra consisting of contact type vector fields ($K_n$). Represenations of $W,S, H$ type Lie algebras have been well studied in literature, for instance see \cite{Rud,Rud2,Shen1,Shen2,Shen3,DSY20}. \\
	Let $A_n^{\pm}=\C[t_1^\pm, \cdots,t_n^\pm]$ and $\rm{Der}(A_n^\pm)$ be the derivation algebra over $A_n^\pm$.  Representation for the Lie algebra $\rm{Der}(A_n^{\pm})$ and its $S,H$ type subalgebras have been well studied in literature, for instance see \cite{BT,GLZ,R04,TH,RC,FT,RP} and references therein. It has been found that there is a very close connections between modules of $W_n$ and $\rm{Der}(A_n^{\pm})$, but its still unknown how they are actually related. \\
	Let $\mf g$ be a finite dimensional simple Lie algebra. It is well known that full toroidal Lie algebras are obtained by considering semi direct product of universal central extension of $\mf g \ot A_n^{\pm}$ and $\rm{Der}(A_n^{\pm})$. Further $S,H$ type subalgebras of full toroidal Lie algebras can be described by taking $S,H$ type subalgebras of $\rm{Der}(A_n^{\pm})$ insteed of $\rm{Der}(A_n^{\pm})$.{ For more details on full toroidal Lie algebras and its $S,H$ type subalgebras see the references \cite{CLT1,CLT2,RH,RSB,TB,RC,PR,RT}.  }  It has been found that $S,H$ type subalgebras of full toroidal Lie algebras forms extended affine Lie algebras, see the references{\cite{SP1,PR,RC}} and references therein. But there are no such studies of representations for
	 the Lie algebras analogus to the full toroidal Lie algebras when one consider $A_n$ insteed to $A_n^{\pm}$. In this paper we concentrate on that problem.\\     
Let us consider the Lie algebra $\mf g \ot A_n$ and its trivial central extension $\mcal Z$ whose basis consists of vectors $\{t^rK_i: r \in \N^n, 1\leq i \leq  n  \} $. Then there is a natural action of $W_n$ on $\mf g \ot A_n \op \mcal Z$, see Section 2. Let $S_n,H_n$ be denote the subalgebras of $W_n$ as described above, for $H_n$ we consider $n=2m$. Choose $X_n$ to be a Lie algebra of the set $\{W_n,S_n,H_n\}$, then $X_n$ is a Lie subalgebra of $W_n$ and $X_n$ is $\Z$ garded.  Hence the action of $W_n$ induces an action of $X_n$ on $\mf g \ot A_n \op \mcal Z$. We consider the corresponding induced Lie algebra as $\mf L(\mf g,X_n)=(\mf g \ot A_n \op \mcal Z) \rtimes X_n$. It is a $\Z$ graded Lie algebra induced by the gradation of the polynomial Lie algebra. We study Category $\mcal O$ for the Lie algebra $\mf L(\mf g,X_n)$. \\
The paper is organized as follows. In section 2, we give the construction of the Lie algebra $\mf L(\mf g,X_n)$. We observe that $\mf L(\mf g,X_n)$ is $\Z$ graded and its zeroth graded component { $\mf L(\mf g,X_n)_0$ is $\mf g \op (X_n)_0 \op {\rm span}\{K_1, \dots, K_n\} $, where $(X_n)_0$ is the zeroth graded component of $X_n$. Further we will see that $(X_n)_0$ is a reductive Lie algebra.} In section 3, we define category $\mcal O$ for $\mf L(\mf g,X_n)$. We construct the standard module for $\mf L(\mf g,X_n)$, which is a induced module of the finite dimensional irreducible over $\mf L(\mf g,X_n)_{-1} \op \mf L(\mf g,X_n)_0 $ and denote the standard modules by $\Delta_X(\la,\mu,\mbf c)$, where $\la , \mu$ are dominant intergral weight for $\mf g$, $(X_n)_0$ respectively and $\mbf c \in \C^n$. In Proposition 3.5, we prove that irreducible objects of category $\mcal O$ are unique irreducible quotient of standard modules. In subsection 3.3, we construct Shen-Larsson type modules for $\mf L(\mf g,X_n)$. In subsection 3.4, we define costandard module $\nabla_X(\la,\mu,\mbf c)$ of category $\mcal O$, for some  dominant intergral weight $\la , \mu$ of $\mf g$ and $(X_n)_0$ respectively and $\mbf c \in \C^n$. In proposition 3.9, we prove one of the crucial result of this paper,  costandard module are isomorphic with Shen-Larsson modules. In subsection 3.6, we define excepctional Shen-Larsson modules for $\mf L(\mf g,X_n)$ and prove that excepctional modules are reducible. \\
In section 4, we consider a category of modules following \cite{Skr} called category of $(A_n, \mf L(\mf g, X_n))$ modules and use the structure of modules for this category to prove that Shen-Larsson modules are irreducible except for excecpctional modules, Theorem 4.8. In section 5, we compute the character formulas for irreducible modules of category $\mcal O$ using the stuructre costandard modules, Theorem 5.2.  In subsection 5.2, we define Tilting modules for category $\mcal O$, and prove existance of unique indecomposable tilting module for category $\mcal O$ by the use of Soergel's construction, \cite{Soe}. Finally we determine the character formulas for indecomposable tilting modules, Theorem 5.11.

	\section{Polynomial Toroidal Lie algebras and its $S,H$ type subalgebras  }
	Let $\mf g$ be a finite dimensional simple Lie algebra and $A_n=\C[t_1,\cdots, t_n]$ be the polynomial algebra over $n$ commuting variables for $n \geq 2$. Then $A_n$ is a $\Z$ graded commutative algebra with gradation induces from the degree of the polynomial. For $g=t^r$, define ${\rm deg}(g)=r_1+\cdots+r_n$. Let $W_n$ be the Lie algebra of derivations over $A_n$. It is well known that $W_n$ has a basis consisting of vectors $\{t^rd_i : r \in \N^n, \, 1 \leq i \leq n\}$, where $d_i=\frac{\partial}{\partial t_i}$ for $1 \leq i \leq n$. Clearly, $\Z$ gardation of $A_n$ induces a $\Z$ gradation on $W_n$, i.e $W_n=  \dis\bgop_{i \in \Z}{(W_n)_i}$, where ${(W_n)_i}= {\rm span}\{  gd_j: {\rm deg}(g)=i+1, g \in A_n,1 \leq j \leq n\}.$\\
	Let $S_n$ be the special type subalgebra of $W_n$ consisting of vector fields $\{\dis{\sum_{i}}g_id_i \in W_n: \dis{\sum_{i}}d_i(g_i)=0  \}.$ Then it can be seen that a spanning set for $S_n$ is given by $$S_n={\rm span}\{d_{ij}(t^r): r \in \N^n\setminus \{0\}, 1 \leq i <j \leq n    \},$$ where $d_{ij}(t^r)=d_j(t^r)d_i-d_i(t^r)d_j$.\\
	Let $H_n$ be the Hamiltonian type subalgebra of $W_n$ consisting of vector fileds which are annhilated by the 2-forms, $\dis{\sum_{i=1}^{m}}dt_i \wedge dt_{m+i} $, for $n=2m$. A spanning set for $H_n$ is given by 
	$$H_n ={\rm span}\{d_{H}(t^r): r \in \N^n\setminus \{0\}    \},$$ where
	$d_H(t^r)= \dis{\sum_{j=1}^{m}} d_j(t^r)d_{m+j} - \dis{\sum_{j=m+1}^{2m}} d_j(t^r)d_{j-m}.$ Through out the paper we fix $n=2m$ for the Lie algebra $H_n$.\\
	Let $X_n$ be denote a Lie algebra of the set $\{ W_n, S_n, H_n\}$.  Then $X_n$ is a subalgebra of $W_n$ and $\Z$ graded, induced from the gradation of $W_n$. More precisely, $(X_n)_i= \dis\bgop_{i \in \Z}{(W_n)_i} \cap X_n $ for all $i \in \Z$. In particular, we get 	$$ (X_n)_0 \cong \begin{cases}
		\mf{gl}_n, \,\, X_n =W_n,\\
		\mf{sl}_n, \,\, X_n =S_n,\\
		\mf{sp}_n, \,\, X_n =H_n.
	\end{cases}  $$ 
A clear descripction of $(X_n)_0$ given by the following.  The triangular decomposition of $(X_n)_0$ is given by $ (X_n)_0= \mf n^-_X \op \mf h_X \op \mf n^+_X$ where,
\[
\mathfrak{n}^-_X=
\begin{cases}
	\operatorname{span}\{t_id_j\mid 1\le j<i\le n\} , \text{if }X_n=W_n,S_n,\\
	\operatorname{span}\{t_id_j-t_{j+m}d_{i+m},\,t_{s+m}d_t+t_{k+m}d_s\mid
	1\le j<i\le m,\,1\le s\le k\le m\}, \text{if }X_n=H_n.
\end{cases}
\] 
\[
\mathfrak{h}_X=
\begin{cases}
	\operatorname{span}\{t_i\partial_i\mid 1\le i\le n\}, \text{if } X_n=W_n,\\
	\operatorname{span}\{t_i\partial_i-t_j\partial_j\mid 1\le i<j\le n\}, \text{if } X_n=S_n,\\
	\operatorname{span}\{t_id_i-t_{i+m}d_{i+m}\mid 1\le i\le m\}, \text{if }X_n=H_n.
\end{cases}
\]
\[
\mathfrak{n}^+_X=
\begin{cases}
	\operatorname{span}\{t_id_j\mid 1\le i<j\le n\} , \text{if }X_n=W_n,S_n,\\
	\operatorname{span}\{t_id_j-t_{j+m}d_{i+m},\,t_{s+m}d_t+t_{k+m}d_s\mid
	1\le i<j\le m,\,1\le k\le s\le m\}, \text{if }X_n=H_n.
\end{cases}
\] 
Now it is easy to see that the isomorphism of $(X_n)_0$ is obtained by sending $t_id_j \to E_{ij}$, for $1 \leq i,j \leq n$.\\
Now we are ready to define the polynomial toroidal Lie algebra and its $S,H$ type subalgebras.
Let us consider the Lie algebra $\mf g \ot A_n$ and its trivial central extension $\mcal Z = {\rm span}\{ t^rK_i : r \in \N^n, 1 \leq i \leq n\}$.
Now	consider the Lie algebra $\mf L(\mf g,W_n)=(\mf g \ot A_n \op \mcal Z)\rtimes W_n$, bracket operations on $\mf L(\mf g,W_n)$ is given by
	\[   [x\ot t^r,y\ot t^s]  =[x,y]\ot t^{r+s},    	\]
	\[    [t^rK_i,x\ot t^s]=[t^rK_i, t^sK_j]=0,\]
	\[	[t^rd_i, t^sK_j]=s_it^{r+s-e_i}K_j,\]
	\[	[t^rd_i, x\ot t^s]=s_ix\ot t^{r+s-e_i},\]
	\[  [t^rd_i,t^sd_j]=s_it^{r+s-e_i}-r_jt^{r+s-e_j}, \]
	for all $r,s \in \N^n,x,y \in \mf g, \, 1\leq i,j \leq n. $ It can be easily seen that degree gradation of $A_n$ induces a gradation on $\mf L(\mf g,W_n)$, which makes it a $\Z$-graded Lie algebra, i.e $\mf L(\mf g,W_n) =\dis\bgop_{i \in \Z}{\mf L(\mf g,W_n)_i}$, where ${\mf L(\mf g,W_n)_i}= {\rm span}\{ \mf g \ot f, \,fK_j,\, \, gd_j: deg(f)=i, \, deg(g)=i+1, f,g \in A_n,1 \leq j \leq n\}.$ We call $\mf L(\mf g, W_n)$ as polynomial toroidal Lie algebra. \\ 
	  Note that $X_n$ is a subalgebra of $W_n$, hence we have an induced action of $X_n$ on $\mf g \ot A_n \op \mcal Z$. We denote the induced Lie algebra as 
	$$ \mf L(\mf g,X_n)=(\mf g \ot A_n \op \mcal Z)\rtimes X_n . $$
	It is clear that $\Z$-gradation of $\mf L(\mf g,X_n)$ is given by  $\mf L(\mf g,X_n) =\dis\bgop_{i \in \Z}{\mf L(\mf g,W_n)_i}\cap \mf L(\mf g,X_n)$.\\
Now, we have the following description for graded components of $\mf L(\mf g, X_n)$.
	$$ {\mf L(\mf g,X_n)_{-1}} \cong {\rm span}\{d_1,\cdots,d_n\} \cong \C^n,  $$
	$$    {\mf L(\mf g,X_n)_{0}} \cong 
	\mf g \op (X_n)_0  \op {\rm span}\{K_1, \cdots, K_n\}, \,\,  \text{and} $$
	$${\mf L(\mf g,X_n)_i}=0, \,\, \forall \, i <-1.$$ It is clear that $\mf L(\mf g,X_n)_0$ is a reductive Lie algebra. Further one can see that center of $\mf L(\mf g,X_n)={\rm span}\{K_1, \cdots, K_n\} $. We denote the center by $\mcal Z (\mf L(\mf g,X_n))$. 
	Let $\mf H=\mf h \op \mf h_X  $, be the Cartan subalgebra of $\mf L(\mf g,X_n)_0$, where $\mf h$ is the Cartan subalgebra of $\mf g$ and $\mf h_X$ is that of $(X_n)_0$. Note that $\Z$ gradation of $\mf L(\mf g,X_n)$ induces a $\Z$ gradation on $U(\mf L(\mf g,X_n))$, i.e $U(\mf L(\mf g,X_n)) = \dis{\bgop_{i \in \Z}} U(\mf L(\mf g,X_n))_i$ and ${\rm dim} U(\mf L(\mf g,X_n))_i < \infty$.
	\section{Category $\mcal O$ }
	\subsection{Category $\mcal O$ for $\mf L(\mf g,X_n)$} The following notion of category $\mcal O$ is an analogy of category for semi-simple Lie algebras, \cite{Hum}. An interested reader can see \cite{DSY20} for the definition of Category $\mcal O$ for $X_n$.
	\begin{definition}\label{category O}
		Denote by $\mathcal{O}$ the BGG category of $\mf L(\mf g, X_n)$-modules, whose objects $M$ are additive groups and satisfy the following properties.
		\begin{itemize}
			\item[(1)] $M$ is an admissible $\mathbb{Z}$-graded $\mf L(\mf g,X_n)$-module, i.e., $M=\bigoplus\limits_{i\in\mathbb{Z}} M_{i}$ with $\dim M_{i}<+\infty$, and
			$\mf L(\mf g,X_n)_{i}M_{j}\subseteq M_{i+j},\forall\,i,j \in \Z$.
			\item[(2)] $M$ is locally finite for ${\rm P}_X= \mf L(\mf g,X_n)_{-1} \op \mf L(\mf g,X_n)_0 $.
			{ \item[(3)] $M$ is $\mf H$-semisimple, i.e., $M$ is a weight module: $M=\bigoplus_{\lambda\in\mf H^*}M_{\lambda}$.}
			
		\end{itemize}
		The morphisms in $\mathcal{O}$ are $\mf L(\mf g,X_n)$-module morphisms that respect the $\mathbb{Z}$-gradation i.e.,
		$${\rm Hom}_{\mathcal{O}}(M, N)=\{f\in{\rm Hom}_{U(\mf L(\mf g,X_n))}(M, N)\mid f(M_{i})\subseteq N_{i},\,\\ \,\forall\,i\in\mathbb{Z}\},\, \forall\,M,N\in\mathcal{O}.$$
		
	\end{definition}
	Additionally, we say an object $M \in \mcal O$ has depth $d$ if $M =\bigoplus\limits_{i \geq d} M_{i}$	with $M_d \neq 0$ and $M_i=0$, for all $i < d$. The following Lemma follows from the fact that $(X_n)_0$ action on $\C^n$ is irreducible, which is a standard fact.
	\begin{lemma}\label{lem X_n acts irr}
	$(X_n)_0$ action on $\mf L(\mf g, X_n)_{-1}$ is irreducible.	
	\end{lemma}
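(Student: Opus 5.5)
We identify $\mf L(\mf g,X_n)_{-1}={\rm span}\{d_1,\dots,d_n\}$ with $\C^n$ and check that, under the isomorphism $(X_n)_0\cong \mf{gl}_n,\mf{sl}_n,\mf{sp}_n$ given by $t_id_j\mapsto E_{ij}$, the adjoint action of $(X_n)_0$ on this space is (up to a dual/sign twist) the natural representation. Concretely, from the bracket $[t^rd_i,t^sd_j]=s_it^{r+s-e_i}d_j-r_jt^{r+s-e_j}d_i$ one computes
\[
[t_id_j, d_k]=-\delta_{ik}\, d_j ,
\]
since $d_k(t_i)=\delta_{ik}$ and $d_j$ kills constants. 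Thus $t_id_j$ acts on $\C^n$ (basis $d_1,\dots,d_n$) as $-E_{ji}$, i.e.\ $X\mapsto -X^{T}$. So $\mf L(\mf g,X_n)_{-1}$ is the dual of the natural module of $(X_n)_0$. The elements $d_1,\dots,d_n$ do lie in $X_n$ for all three cases: $d_i=d_{ij}(t_j)$ for $S_n$, and $d_H(t_{m+j})=d_j$, $d_H(t_j)=-d_{m+j}$ for $H_n$. Hence in each case the $(-1)$-component really is all of $\C^n$.

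Irreducibility then follows from the standard fact that the natural representation of $\mf{gl}_n$, $\mf{sl}_n$ ($n\ge2$) and $\mf{sp}_{2m}$ is irreducible, and a dual of an irreducible finite-dimensional module is irreducible. To keep the argument self-contained, we would give the direct check instead. Let $0\ne U\subseteq \C^n$ be a submodule and $v=\sum c_kd_k\in U$ with some $c_p\neq 0$. For $W_n,S_n$, applying $t_pd_q$ with $q\ne p$ gives $-c_pd_q\in U$, so $d_q\in U$. Applying $t_qd_{q'}$ then produces every $d_{q'}$ with $q'\neq q$, so $U=\C^n$.

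For $H_n$ we use the elements $t_{s+m}d_s$ and $t_sd_{s+m}$ (the cases $k=s$ of the listed root vectors, up to scalar), together with $t_id_j-t_{j+m}d_{i+m}$. The first two move $d_s\leftrightarrow d_{s+m}$ up to sign; the last sends $d_i\mapsto -d_j$ and $d_{j+m}\mapsto d_{i+m}$. From one basis vector we thus reach all of them. To first extract a single basis vector from $v$, we use that $\mf h_X$ acts diagonally on the basis $d_k$ with pairwise distinct weights: $-\epsilon_k$ for $\mf{gl}_n$, their restrictions for $\mf{sl}_n$, and $\mp\epsilon_i$ for $\mf{sp}$. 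So $U$, being $\mf h_X$-stable, is spanned by weight vectors, hence contains some $d_p$.

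The only real care point is this distinctness of weights for $S_n$ and $H_n$, which is immediate for $n\ge 2$. Everything else is routine.
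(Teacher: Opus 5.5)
Your proof is correct and is essentially the paper's argument: the paper simply cites the standard irreducibility of the natural representation of $\mf{gl}_n,\mf{sl}_n,\mf{sp}_n$ on $\C^n$, and your computation $[t_id_j,d_k]=-\delta_{ik}d_j$, which identifies $\mf L(\mf g,X_n)_{-1}$ with the dual module, together with your explicit weight and root-vector check, just spells that fact out. The only slip is a harmless sign, since in fact $d_H(t_{m+j})=-d_j$ and $d_H(t_j)=d_{m+j}$.
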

	\begin{lemma}\label{Trivial action of L_{-1}}
		Let $V$ be a finite dimensional irreducible module for $\mf L(\mf g,X_n)_{-1}\op \mf L(\mf g,X_n)_{0}$. Then $\mf L(\mf g,X_n)_{-1}$ acts trivially on $V$.
	\end{lemma}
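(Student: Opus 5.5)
Write $\mf L_{i}$ for $\mf L(\mf g,X_n)_i$ and ${\rm P}_X=\mf L_{-1}\op\mf L_0$; recall $\mf L_{-1}={\rm span}\{d_1,\dots,d_n\}$. The plan is to show that the subspace
$$V^{0}=\{v\in V : \mf L_{-1}v=0\}$$
is a nonzero ${\rm P}_X$-submodule of $V$. Irreducibility then forces $V^{0}=V$, which is the claim.

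\emph{$V^{0}$ is a submodule.} It is stable under $\mf L_{-1}$ trivially, since $\mf L_{-1}$ is abelian: $[d_i,d_j]=0$. It is stable under $\mf L_0$ because $[\mf L_0,\mf L_{-1}]\subseteq \mf L_{-1}$. Indeed, for $x\in\mf L_0$, $y\in\mf L_{-1}$ and $v\in V^0$ we have
$$y\,xv=[y,x]v+x\,yv=0.$$

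\emph{$V^{0}\neq 0$.} This is the main step. I would use the Euler-type element $E\in\mf L_0$ that acts on $\mf L_{-1}$ by the scalar $-1$. For $X_n=W_n$, take $E=\sum_i t_id_i$; then $[E,d_j]=-d_j$.

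For $X_n=S_n$ or $H_n$ no such element lies in $(X_n)_0$, so I would argue via nilpotency instead. The operators $\rho(d_1),\dots,\rho(d_n)$ commute, so $V$ decomposes into generalized joint eigenspaces $V_\chi$, $\chi\in(\mf L_{-1})^*$. For $x\in\mf L_0$ the identity
$$(\rho(y)-\chi(y))\rho(x)=\rho(x)(\rho(y)-\chi(y))+\rho([y,x])$$
shows that $\rho(x)$ maps $V_\chi$ into $\bigoplus_{\chi'}V_{\chi'}$ with weights transformed by the coadjoint action. More cleanly, the set of weights $\chi$ that occur is a finite subset of $(\mf L_{-1})^*\cong(\C^n)^*$. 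That set is stable under the connected group generated by $\exp(\ad (X_n)_0)$, which is $GL_n$, $SL_n$ or $Sp_n$. (To justify this, twisting $V$ by $\exp(\ad x)$ for $x\in\mf L_0$ gives an isomorphic module, because $\rho(x)$ exponentiates on the finite-dimensional space $V$.) Each of these groups acts on $\C^n\setminus\{0\}$ with infinite orbits, so the only possible weight is $\chi=0$. Hence every $\rho(d_i)$ is nilpotent. By Engel's theorem the commuting nilpotent operators $\rho(d_i)$ have a common kernel vector, so $V^0\neq0$.

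The obstacle is making the weight-stability argument rigorous. An alternative is to use that $\mf g\op(X_n)_0$ is semisimple modulo its center and that $\mf L_{-1}\subseteq[\mf L_0,\mf L_{-1}]$ by Lemma \ref{lem X_n acts irr}. Lie's theorem applied to the solvable ideal $\mf L_{-1}$ of ${\rm P}_X$ gives a common eigenvector with weight $\chi$. The invariance lemma (as in Humphreys) then shows that $\chi$ vanishes on $[{\rm P}_X,\mf L_{-1}]=\mf L_{-1}$, which gives $\chi=0$ directly. I would use this route in the write-up, as it avoids groups entirely. In that route the weight space $\{v:yv=\chi(y)v\}$ is itself a nonzero submodule, so it equals $V$, and $\chi=0$ means $\mf L_{-1}$ acts trivially.
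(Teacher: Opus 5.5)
Your final route is correct and is essentially the paper's argument: take a common eigenvector for the abelian ideal $\mf L(\mf g,X_n)_{-1}$, then force its weight to vanish by a trace-of-commutator computation using $[\mf L(\mf g,X_n)_0,\mf L(\mf g,X_n)_{-1}]=\mf L(\mf g,X_n)_{-1}$ (Lemma~\ref{lem X_n acts irr}). Citing the invariance lemma is actually cleaner than the paper's version: its triangularity step relies on the claim $[x,Y_i]\in U(\mf L(\mf g,X_n)_{-1})$ for $Y_i\in U(\mf L(\mf g,X_n)_0)$, which fails once $Y_i$ has degree at least two and would need a PBW-filtration argument to repair.
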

	\begin{proof}
		Let $W=\{v \in V: \mf L(\mf g,X_n)_{-1}.v=0\}$. At first we show that $W$ is a $\mf L(\mf g,X_n)_{-1}\op \mf L(\mf g,X_n)_{0}$ submodule of $V$.
		Note that for all $x \in \mf L(\mf g,X_n)_{-1} $ and $y \in \mf L(\mf g,X_n)_{-1}\op \mf L(\mf g,X_n)_{0}$ we have,  $x.(y.v)=y.x.v+[x,y].v =[x,y].v=0,  $ since $[x,y] \in \mf L(\mf g,X_n)_{-1}$. This proves that $W$ is a $\mf L(\mf g,X_n)_{-1}\op\mf L( \mf g,X_n)_0$ submodule.\\
		Hence to complete the proof, it is sufficient to show that $W \neq 0$. Since $\mf L(\mf g,X_n)_{-1}$ is an abelian subalgebra of $\mf L(\mf g,X_n)_{-1}\op \mf L(\mf g,X_n)_{0}$, hence there exists a non-zero $v \in V$ such that $x.v=\la(x)v$ for all $x \in \mf L(\mf g,X_n)_{-1}$. Now $V$ is irreducible, hence we must have $V=U( \mf L(\mf g,X_n)_{0})v$. Therefore there exists $Y_1, \dots, Y_n \in U( \mf L(\mf g,X_n)_{0})$ such that $\mcal B=\{v, Y_1.v, \dots , Y_n.v \}$ forms a basis for $V$. Now we prove that for any $x \in \mf L(\mf g,X_n)_{-1}$ the matrix of $x$ with respect to $\mcal B$ forms a upper triangular matrix with all diagonal entry $\la(x)$. This will follow from the following observation.
		$$ x.(Y_i.v)=\la(x)Y_i.v+[x,Y_i].v =\la([x,Y_i])v+\la(x)Y_i.v,  \,\, \text{since} \, [x,Y_i] \, \in U(\mf L(\mf g)_{-1}), \, \forall \, i .$$
		By Lemma \ref{lem X_n acts irr},  $\mf L(\mf g,X_n)_{-1}$ is an irreducible representation of $ \mf L(\mf g,X_n)_{0}$. Hence for non-zero $x,y \in \mf L(\mf g,X_n)_{-1}$ there exists a $Y \in U( \mf L(\mf g,X_n)_{0})$ such that $[Y,y]=x$. Now considering trace on both side we conclude that trace of the matrix of $x$ is zero, i.e $\la(x)=0$. This proves that $W \neq 0$.
		
	\end{proof}

	\begin{remark}
		\begin{itemize}
			\item[(1)] It is easily seen that $\mcal O$ is an abelian category
			\item[(2)] Let $\mcal O_P^{lf}$ be denote the category of locally finite $P$ modules. Let $M\in \mcal O_P^{lf}$, then $M$ is finite dimensional. Hence from Lemma \ref{Trivial action of L_{-1}}, it follows that $\mf L(\mf g,X_n)_{-1}$ acts trivially on $M$.
		\end{itemize}
	\end{remark}
	\subsection{Standard objects in $\mcal O$}\label{sec: standard obj}
	\subsubsection{}\label{basic notations}
	Let $\mf g=\mf n^- \op \mf h \op \mf n^+$ and $(X_n)_0=\mf n^-_X \op \mf h_X \op \mf n^+_X$ be the triangular decomposition of $\mf g$ and $(X_n)_0$ respectively. Let $\Lambda^+_{\mf g,X}$ be the set of dominant integral weights of $\mf g \op (X_n)_0$ relative to the standard Borel subalgebra $\mathfrak{b}=(\mf h \op\mf n^+) \op (\mf h_X \op \mf n^+_X) $ of $\mf L(\mf g,X_n)_{0}$, i.e.
	$$\La^+_{\mf g, X}=\{ (\la,\mu): \la \in \La_{\mf g}^+, \mu \in \La_{X}^+\},$$  where $\La_{\mf g}^+$ is the set of dominant integral weights for $\mf g$ and $\La_X^+$ is the set of dominant integral weights for $(X_n)_0.$ Denote $	\La^+_{\mf g, X,\C^n}=\La^+_{\mf g, X}\times \C^n.$
	
	It is well-known that finite-dimensional irreducible $\mf L(\mf g,X_n)_0$-modules are isomorphic to $V(\la)\ot L^0(\mu)$ for some irreducible $\mf g$ module $V(\la)$ and irreducible $(X_n)_0$ module $L^0(\mu)$ on which $K_i$ acts by scalar $c_i \in \C$. We denote these modules by $L^0(\la,\mu, \mbf c)$, for some $\mbf c=(c_1, \cdots, c_n) \in \C^n$. Further it is known that $\Z$-graded finite-dimensional irreducible $\mf L(\mf g, X_n)_0$-modules are parameterized by ${\Lambda^+_{\mf g,X,\C^n}}\times\mathbb{Z}$. For any $(\lambda, \mu, \mbf c)\in {\Lambda^+_{\mf g, X,\C^n}}$ define ${}^dL^0_X(\lambda,\mu,\mbf c) $ as the irreducible  $\mf L(\mf g,X_n)_0$-module concentrated in a single degree $d$ isomorphic with $V(\la)\ot L^0(\mu)$.  Set $${}^d\Delta_X(\lambda, \mu,\mbf c)=U(\mf L(\mf g,X_n))\otimes_{U({\rm P}_X)}{}^dL^0_X(\lambda,\mu,\mbf c),$$
	where ${}^dL^0_X(\lambda,\mu,\mbf c)$ is regarded as a ${\rm P}_X$-module with trivial $\mf L(\mf g,X_n)_{{-1}}$-action. Then ${}^d\Delta_X(\lambda,\mu,\mbf c)$ is called a standard module of depth $d$ and
	$\{{}^d\Delta_X(\lambda,\mu,\mbf c)\mid (\lambda, \mu,\mbf c)\in {\Lambda^+_{\mf g,X,\C^n}}\,,d\in\mathbb{Z}\}$ constitute a class of standard modules of depth $d$ for $\mf L(\mf g,X_n)$ in the usual sense.  The following result is analogus to \cite[Lemma 3.1]{DSY20}.
	{
		\begin{proposition}\label{lem1}
			Let $(\lambda,\mu,\mbf c)\in{\Lambda^+_{\mf g, X,\C^n}}\, , d\in\mathbb{Z}$. The following statements hold.
			\begin{itemize}
				\item[(1)] The standard module ${}^d\Delta_X(\lambda,\mu,\mbf c)$ is an object in $\mathcal{O}$.
				\item[(2)] The standard module ${}^d\Delta_X(\lambda,\mu,\mbf c)$ has a unique irreducible quotient, denoted by ${}^dL_X(\lambda,\mu,\mbf c)$.
				\item[(3)] The iso-classes of irreducible modules in $\mathcal{O}$ are parameterized by ${\Lambda^+_{\mf g,X,\C^n}}\times \mathbb{Z}$. More precisely, each irreducible module $S$ in $\mathcal{O}$ is of the form $L_X(\la,\mu, \mbf c)$ for some $(\la,\mu,\mbf c)\in \La^+_{\mf g,X,\C^n}$ with the depth $d$.
			\end{itemize}
		\end{proposition}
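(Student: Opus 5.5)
The approach follows \cite[Lemma 3.1]{DSY20}, using the triangular structure $\mf L(\mf g,X_n)=\mf L_{+}\oplus {\rm P}_X$, where $\mf L_+=\bigoplus_{i\geq 1}\mf L(\mf g,X_n)_i$ is a subalgebra.

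For (1), apply PBW. As a vector space,
\[
{}^d\Delta_X(\la,\mu,\mbf c)\cong U(\mf L_+)\ot {}^dL^0_X(\la,\mu,\mbf c).
\]
The grading is $\deg(u\ot v)=i+d$ for $u\in U(\mf L_+)_i$. Each $U(\mf L_+)_i$ is finite dimensional, since each $\mf L(\mf g,X_n)_i$ is, so every graded piece is finite dimensional. The module vanishes below degree $d$, and its degree $d$ piece is ${}^dL^0_X(\la,\mu,\mbf c)$. The $\mf H$-semisimplicity holds because $\mf H$ acts semisimply on $\mf L_+$ by the adjoint action and on $L^0$. For local finiteness under ${\rm P}_X$, note that ${\rm P}_X$ has degree $\le 0$. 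So for any homogeneous $w$ of degree $j$, the space $U({\rm P}_X)w$ lies in $\bigoplus_{d\le k\le j}M_k$, which is finite dimensional. Hence ${\rm P}_X$ acts locally finitely.

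For (2), call a graded submodule $N$ proper if $N_d=0$. Since $M_d$ is irreducible for $\mf L(\mf g,X_n)_0$ and generates $M$, a graded submodule is either everything or has $N_d=0$. I will first argue that every submodule in $\mcal O$ is graded. Alternatively, one can work with graded submodules throughout, since morphisms in $\mcal O$ respect the grading. The sum of all proper graded submodules still has zero degree $d$ component, so it is the unique maximal proper submodule. Its quotient ${}^dL_X(\la,\mu,\mbf c)$ is then the unique irreducible quotient.

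For (3), let $S\in\mcal O$ be irreducible. The main obstacle is to show that $S$ has a lowest degree, i.e.\ finite depth. I would try first the following argument. Pick a nonzero homogeneous $v\in S_j$. Then $S=U(\mf L(\mf g,X_n))v=U(\mf L_+)U({\rm P}_X)v$. Because ${\rm P}_X$ acts locally finitely and has degrees in $\{-1,0\}$, the space $U({\rm P}_X)v$ is finite dimensional. It therefore lies in finitely many degrees, so $S$ is bounded below; let $d$ be its depth.

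Then $S_d$ is killed by $\mf L(\mf g,X_n)_{-1}$ for degree reasons. Also $S_d$ is an irreducible $\mf L(\mf g,X_n)_0$-module: any proper $\mf L(\mf g,X_n)_0$-submodule $U\subset S_d$ would generate the submodule $U(\mf L_+)U$, whose degree $d$ part is just $U$, contradicting irreducibility. The center acts on $S_d$ by scalars $\mbf c$ (Schur's lemma applies since $S_d$ is finite dimensional), and $\mf H$ acts semisimply. So by the classification of finite-dimensional irreducible modules, $S_d\cong {}^dL^0_X(\la,\mu,\mbf c)$ for some $(\la,\mu,\mbf c)\in\La^+_{\mf g,X,\C^n}$.

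Frobenius reciprocity now gives a nonzero, hence surjective, map ${}^d\Delta_X(\la,\mu,\mbf c)\to S$, so $S\cong{}^dL_X(\la,\mu,\mbf c)$ by (2). Distinct parameters give non-isomorphic modules, since the depth $d$ and the lowest component $S_d$ are invariants of $S$. This yields the parametrization by $\La^+_{\mf g,X,\C^n}\times\Z$.
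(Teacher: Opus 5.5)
Your proof is correct. Parts (1) and (2) follow the paper's argument:
\begin{itemize}
\item the PBW identification ${}^d\Delta_X(\la,\mu,\mbf c)\cong U(\mf L(\mf g,X_n)_{\geq 1})\ot {}^dL^0_X(\la,\mu,\mbf c)$;
\item local finiteness from the degree bound on $U({\rm P}_X)w$;
\item the observation that a proper submodule has zero lowest-degree component, so the sum of all proper submodules is still proper.
\end{itemize}
Restricting to graded submodules (subobjects in $\mcal O$) is the right framing. The paper uses this silently when it passes from $W\cap M_0=0$ to $W\subseteq\bigoplus_{i>0}M_i$.

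Part (3) takes a different route from the paper. The paper picks an irreducible finite-dimensional ${\rm P}_X$-submodule $S_1\subseteq U({\rm P}_X)v$. It then invokes the earlier lemma that $\mf L(\mf g,X_n)_{-1}$ acts trivially on finite-dimensional irreducible ${\rm P}_X$-modules (the trace argument) and applies Frobenius reciprocity. You instead use local finiteness to show $S$ is bounded below and work with the lowest-degree component $S_d$. It is killed by $\mf L(\mf g,X_n)_{-1}$ purely for degree reasons. Its $\mf L(\mf g,X_n)_0$-irreducibility follows because $U(\mf L_+)U$ has degree-$d$ part exactly $U$.

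What your route buys:
\begin{itemize}
\item The trace lemma becomes unnecessary.
\item You get a generating subspace concentrated in a single degree. The paper tacitly needs $S_1$ to be homogeneous so that the induced map is a morphism in $\mcal O$.
\item The depth and $S_d$ appear as isomorphism invariants, which gives injectivity of the parametrization. The paper only gestures at this.
\end{itemize}
The paper's route, in turn, needs no separate boundedness argument: the lower bound on degrees falls out once $S$ is a quotient of ${}^d\Delta_X$.
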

	}
\begin{proof}
(1)	Note that as a vector space, ${}^d\Delta_X(\la,\mu,\mbf c) \cong U(\mf L(\mf g,X_n)_{\geq 1})\ot_\C {}^dL^0_X(\la,\mu,\mbf c) = \dis{\bgop_{i \in \Z_{\geq 0}}}M_i$, where $M_i=  U(\mf L(\mf g,X_n)_{\geq 1})_i\ot_\C {}^dL^0_X(\la,\mu,\mbf c) $ and $ U(\mf L(\mf g,X_n)_{\geq 1})_i$ is the $i$-th garded component of  $U(\mf L(\mf g,X_n)_{\geq 1}).$ Clearly it is $\mf H$ semi-simple. Thus to complete the proof of (1) we need to show that ${}^d\Delta_X(\la,\mu,\mbf c) $ is $U({\rm P}_X)$ locally finite. Let us choose a typical element as $w=\dis{\sum_{s=1}^{k}} w_s \ot v$, where $v \in {}^dL^0_X (\la,\mu,\mbf c)$ and $w_s \in U(\mf L(\mf g,X_n)_{\geq 1})_s.$ 
	  $$U({\rm P}_X) \cd w \subseteq \dis{\sum_{s=1}^{k}} U({\rm P}_X)w_s \ot {}^dL^0_X(\la,\mu,\mbf c)  \subseteq  \dis{\sum_{s=0}^{k}} U(\mf L(\mf g,X_n)_{\geq 1})_s \ot {}^d L^0_X(\la,\mu,\mbf c) < \infty  .$$
	  This completes the proof of (1).\\
(2) Let $W$ be any non zero proper submodule of $	 {}^d\Delta_X(\la,\mu,\mbf c) $. Then $W \cap M_0$ is a $(\mf L(\mf g, X_n))_0$ submodule of $M_0$. But $M_0$ is irreducible $(\mf L(\mf g, X_n))_0$ module. Hence $W \cap M_0= 0$ or $M_0$. If $W\cap M_0 \neq 0$ , then $W = {}^d\Delta_X(\la,\mu,\mbf c)$. Thus  $W \cap M_0= 0$, i.e $W \subseteq \dis{\bgop_{i \in \Z_{> 0}}}M_i$. Hence sum of all proper submodule is the maximal proper submodule of ${}^d\Delta_X(\la,\mu,\mbf c) $. Therefore standard module has a unique irreducble quotient.\\
(3) Let $S$ be any irreducble module in $\mcal O$. $U({\rm P}_X) \cd v$  finite dimensional $U({\rm P}_X)$ module for all $v \in S$. Choose an irreducible finite dimensional $U({\rm P}_X)$ submodule $S_1$ of  $U({\rm P}_X) \cd v$ for some $v \in S$. By Remark 2.4, $\mf L(\mf g, X_n)_{-1}$ acts trivially on $S_1$. Therefore $S_1$ is finite dimensional $\mf L(\mf g, X_n)_{0}$ module isomorphic to ${}^dL^0_X(\la, \mu , \mbf c)$ for some $(\la,\mu, \mbf c )\in \La^+_{\mf g,X} \times \C^n$ and $d \in \Z$. Consequently $S$ is an irreducble quotient of $ {}^d \Delta_X(\la,\mu, \mbf c)$ isomorphic to  ${}^dL_X(\lambda,\mu,\mbf c)$. Note that the depth d of ${}^dL_X(\lambda,\mu,\mbf c)$ is determinded by the action of $U({\rm P}_X) \cd v$ , since $S=U(\mf L(\mf g,X_n)) \cd v =U(\mf L(\mf g,X_n)_{\geq 1}) U({\rm P}_X) \cd v$ and $U(\mf L(\mf g,X_n)_{\geq 1})$ can not decrease the depth. This completes the proof.

\end{proof}
\subsubsection{Shift functors by shifting depthes}\label{sec: sub O}
	Set $$\mathcal{O}_{\geq d}:=\{M\in\mathcal{O}\mid M=\sum\limits_{i\geq d}M_{i}\},$$ which consists of objects admitting depths not smaller than $d$.
	Consider the shift functor  $T_{d',d}:\mathcal{O}_{\geq d}\longrightarrow\mathcal{O}_{\geq d'}$ which maps the object  $M=\sum_{k}M_k\in \mathcal{O}_{\geq d}$   to $M[d'-d]\in\mathcal{O}_{\geq d'}$, where $M[d'-d] =\sum_{k}(M[d'-d])_k$ and $M[d'-d]_k=M_{k+d'-d}$.  The following fact is clear.
	
	\begin{lemma}\label{lem: shift functor}
		The functor $T_{d',d}$ induces a category equivalence.
	\end{lemma}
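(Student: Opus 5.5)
The plan is to exhibit an explicit inverse functor. Since $T_{d',d}$ only relabels the grading and leaves the underlying $\mf L(\mf g,X_n)$-module structure alone, the natural candidate is $T_{d,d'}:\mathcal{O}_{\geq d'}\to\mathcal{O}_{\geq d}$. We then check that the two composites are literally the identity functors.

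First we verify that $T_{d',d}$ is well defined on objects. Take $M\in\mathcal{O}_{\geq d}$ and set $N=M[d'-d]$, so $N_k=M_{k+d'-d}$ as in the definition. The regrading must satisfy the following, which we check in turn.
\begin{enumerate}[(i)]
\item Admissibility holds, since each $N_k$ is some $M_j$ and so $\dim N_k<\infty$.
\item The grading is compatible with the action: $\mf L(\mf g,X_n)_iN_k=\mf L(\mf g,X_n)_iM_{k+d'-d}\subseteq M_{i+k+d'-d}=N_{i+k}$.
\item Local finiteness for ${\rm P}_X$ and $\mf H$-semisimplicity are properties of the underlying module. They do not involve the grading, so they are preserved.
\item For the depth condition, we read the indices so that nonzero components of $N$ sit in degrees $\geq d'$. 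If necessary we fix the sign convention of the shift so that the component $M_d$ is placed in degree $d'$; the statement clearly intends that the depth $d$ be moved to depth $d'$.
\end{enumerate}
With this convention $N\in\mathcal{O}_{\geq d'}$.

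Next we treat morphisms. A morphism $f:M\to M'$ in $\mathcal{O}$ is a module map with $f(M_j)\subseteq M'_j$ for all $j$. The same linear map satisfies $f(N_k)\subseteq N'_k$, so we define $T_{d',d}(f)=f$. Functoriality is then immediate, and the map ${\rm Hom}_{\mathcal O}(M,M')\to{\rm Hom}_{\mathcal O}(T_{d',d}M,T_{d',d}M')$ is the identity on underlying sets. In particular it is bijective, so $T_{d',d}$ is fully faithful.

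Finally, $T_{d,d'}\circ T_{d',d}$ and $T_{d',d}\circ T_{d,d'}$ shift the grading by opposite amounts and act as the identity on maps. Hence both composites are the identity functors, and $T_{d',d}$ is an isomorphism (in particular an equivalence) of categories. There is no real obstacle. The only point needing care is the index convention for $M[d'-d]$ in step (iv), which must be chosen so that the image actually lands in $\mathcal{O}_{\geq d'}$.
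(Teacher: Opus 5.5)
Your proof is correct and is the routine argument the paper leaves implicit; the paper gives no proof and simply calls the lemma clear. As you say, the shift is the identity on underlying modules and morphisms, and $T_{d,d'}$ is a strict inverse. Your caution in (iv) is also justified: with the paper's literal formula $M[d'-d]_k=M_{k+d'-d}$, the image of $\mathcal{O}_{\geq d}$ is supported in degrees $\geq 2d-d'$, so one must take $N_k=M_{k-(d'-d)}$ for the functor to land in $\mathcal{O}_{\geq d'}$.
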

	
	With the shift functors, we can focus our concern on $\mathcal{O}_{\geq 0}$. Therefore from this point we only consider the depth of the modules as zero.
	\subsection{Shen-Larsson module for $\mf L(\mf g,X_n)$} We define Shen-Larsen type module structure on $A_n \ot V$, for a $\mf L(\mf g,X_n)_0$  module $V$. Main ideas behind these module structures comes from the Shen-Larsen modules of $X_n$. An interested reader can see \cite{Shen1,Shen2,Shen3,Rud,Rud2,DSY20} for more details on Shen-Larsen modules over the Lie algebra $X_n$. Further, in section 4, we see another descripction of these module structures which will be more convinient to understand these module structures.

\subsubsection{\bf $\mf L(\mf g,W_n)$ module structure:} A $\mf L(\mf g,W_n)$ module structure on $A_n \ot V$ is given by:
	\begin{align}\label{rho module W_n}
		t^rd_i \cdot (t^s\ot v)=t^rd_i(t^s)\ot v+ \dis{\sum_{j=1}^{n}}d_j(t^r)t^s\ot x_jd_i\cd v\\
		x\ot t^r \cdot (t^s\ot v)=t^{r+s}\ot x \cd v\\
		t^rK_i \cd (t^s\ot v) =t^{r+s}\ot K_i \cd v,
	\end{align}
	for all $r,s\in \N^n, v \in V, 1 \leq i \leq n, x \in \mf g.$ 
\subsubsection{\bf $\mf L(\mf g,S_n)$ module structure:} A $\mf L(\mf g,S_n)$ module structure on $A_n \ot V$ is given by:
\begin{align}\label{rho module S_n}
	d_{ij}(t^r) \cdot (t^s\ot v)=	d_{ij}(t^r)t^s\ot v 
	+ d_id_j(t^r)t^s\ot (x_id_i-x_jd_j)\cd v     \cr
	 +\dis{\sum_{\substack{k=1\\ k\neq i}}^{n}}d_kd_j(t^r)t^s\ot x_kd_i \cd v -  \dis{\sum_{\substack{k=1\\ k\neq j}}^{n}}d_kd_i(t^r)t^s\ot x_kd_j \cd v \\
	x\ot t^r \cdot (t^s\ot v)=t^{r+s}\ot x \cd v\\
	t^rK_i \cd (t^s\ot v) =t^{r+s}\ot K_i \cd v,
\end{align}
for all $r,s\in \N^n, v \in V, 1 \leq i<j \leq n, x \in \mf g.$ 

\subsubsection{\bf $\mf L(\mf g,H_n)$ module structure:} A $\mf L(\mf g,H_n)$ module structure on $A_n \ot V$ is given by:
\begin{align}\label{rho module H_n}
	d_{H}(t^r) \cdot (t^s\ot v)=	d_{H}(t^r)t^s\ot v 
	+\dis{\sum_{\substack{k=1}}^{m}}d_k^2(t^r)t^s\ot x_kd_{k+m}  \cd v -	\dis{\sum_{\substack{k=m+1}}^{2m}}d_k^2(t^r)t^s\ot x_{k}d_{k-m} \cd v \cr
     -\dis{\sum_{k=1}^{m}}\dis{\sum_{j=m+1}^{2m}}d_jd_k(t^r)t^s\ot (x_kd_{j-m} -x_{j}d_{m+k}) \cd v +\dis{\sum_{1 \leq j <k\leq m}}d_jd_k(t^r)t^s\ot (x_kd_{m+j} +x_{j}d_{m+k}) \cd v    \cr
       \dis{\sum_{m+1 \leq j <k\leq 2m}}d_jd_k(t^r)t^s\ot (x_kd_{j-m} +x_{j}d_{k-m}) \cd v  \\
	x\ot t^r \cdot (t^s\ot v)=t^{r+s}\ot x \cd v\\
	t^rK_i \cd (t^s\ot v) =t^{r+s}\ot K_i \cd v,
\end{align}
for all $r,s\in \N^n, v \in V, 1 \leq i\leq n, x \in \mf g.$ It is routine to check that the actions defined in subsections (3.3.1), (3.3.2) and (3.3.3) are module actions on $A_n \ot V$ for $\mf L(\mf g, X_n)$. We denote these representations as $(A_n \ot V, \rho_X)$.

	\begin{remark}
		Let $V \cong V'$ be two isomorphic $\mf L(\mf g,X_n)_0$ modules. Then $A_n \ot V$ and $A_n \ot V'$ are isomorphic $\mf L(\mf g,X_n)$ modules. In particular, it can be verified that if $\phi: V \to V'$ be the isomorphism of $\mf L(\mf g,X_n)_0$ modules, then 
		$$\psi: A_n \ot V \to A_n \ot V' \,\, \text{by}   $$
		$$t^r \ot v \to t^r \ot \phi(v), $$
		$ \forall \, r \in \N^n, v \in V $ is an isomorphism of $\mf L(\mf g,X_n)$ modules. 
	\end{remark}
	
		\begin{lemma}\label{Lemma contain V }
			Let $V$ be an irreducible $\mf L(\mf g,X_n)_0$ module.	Then any non-zero submodule of $A_n \ot V$ contains $1 \ot V$. 
		\end{lemma}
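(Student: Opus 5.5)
The idea is to use the derivations $d_i\in\mf L(\mf g,X_n)_{-1}$ to lower polynomial degree until a nonzero element of $1\ot V$ is reached, and then use irreducibility of $V$ over $\mf L(\mf g,X_n)_0$.

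\textbf{Step 1: the action of $\mf L(\mf g,X_n)_{-1}$.} I first record how $d_i$ acts on $A_n\ot V$ under $\rho_X$.

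For $X_n=W_n$, put $r=0$ in the first formula of (3.3.1). All $d_j(t^0)=0$, so
$$d_i\cdot(t^s\ot v)=d_i(t^s)\ot v=s_i\, t^{s-e_i}\ot v.$$

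For $S_n$, the degree $-1$ elements are $d_{ij}(t_k)$, namely $d_j$ (when $k=i$) and $-d_i$ (when $k=j$). All second derivatives of a linear monomial vanish, so again only the first term survives and each $d_i$ acts as the derivation $d_i\ot 1$.

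For $H_n$, the degree $-1$ elements are $d_H(t_k)=\pm d_{k\pm m}$. For the same reason only the first term survives. Hence in all three cases $d_i$ acts on $A_n\ot V$ as $d_i\ot \mathrm{id}_V$.

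\textbf{Step 2: reduction to degree $0$.} Let $N\neq 0$ be a submodule and pick $0\neq w\in N$. Write
$$w=\sum_{s} t^s\ot v_s,$$
a finite sum with the $v_s\in V$ not all zero. Among the $s$ with $v_s\neq 0$, choose $s$ maximal in the lexicographic order. Apply $D=d_1^{s_1}\cdots d_n^{s_n}$.

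By Step 1, $D$ kills every term $t^{s'}\ot v_{s'}$ with $s'_i<s_i$ for some $i$. A surviving term must have $s'\geq s$ componentwise, and by maximality this forces $s'=s$. Therefore
$$D\cdot w=s_1!\cdots s_n!\,(1\ot v_s),$$
which is a nonzero element of $1\ot V$ lying in $N$.

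\textbf{Step 3: $1\ot V$ is stable under $\mf L(\mf g,X_n)_0$.} For degree-$0$ elements the first-derivative and second-derivative coefficients in formulas (3.3.1)–(3.3.3) are constants, and the term $d(t^r)t^0$ vanishes. So on $1\ot V$ these elements act as the corresponding operators on $V$:
\begin{itemize}
\item For $W_n$, $t_kd_i\cdot(1\ot v)=1\ot x_kd_i\cdot v$.
\item For $x\in\mf g$, $x\cdot(1\ot v)=1\ot x\cdot v$.
\item For $K_i$, $K_i\cdot(1\ot v)=1\ot K_i\cdot v$.
\end{itemize}
For $S_n$ and $H_n$, the degree-$0$ elements $d_{ij}(t^r)$ and $d_H(t^r)$ with $\deg r=1$ resp.\ $2$ act on $1\ot V$ through the matching elements of $(X_n)_0$ acting on $V$.

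Thus $1\ot V\cong V$ as $\mf L(\mf g,X_n)_0$-modules. In particular $N\cap(1\ot V)$ is a nonzero $\mf L(\mf g,X_n)_0$-submodule of $V$. Since $V$ is irreducible, it equals $1\ot V$, which proves the lemma.

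The main obstacle is Step 3 for $S_n$ and $H_n$. There one has to check that the explicit degree-$0$ formulas reproduce the full action of $(X_n)_0\cong\mf{sl}_n$ resp.\ $\mf{sp}_n$ on $V$ (identifying $t_id_j\mapsto E_{ij}$), rather than some twisted action. This is a direct computation from the displayed formulas.
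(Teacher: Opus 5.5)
Your proof is correct and takes the same approach as the paper. You use the $d_i$, which act as $d_i\ot\mathrm{id}$, to lower the polynomial degree until you reach a nonzero vector of $1\ot V$, and then invoke irreducibility of $V$. What you add are the details the paper leaves implicit: the lexicographically maximal exponent, and the check that $\mf L(\mf g,X_n)_0$ acts on $1\ot V$ through its given action on $V$. (One harmless slip in Step 1: in fact $d_{ij}(t_i)=-d_j$ and $d_{ij}(t_j)=d_i$, not the reverse.)
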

		\begin{proof}
		By applying actions of $d_i$  sufficiently many times we can conclude that every non-zero submodule contains a vector of the form $1 \ot v$, for some $v \in V$. Now by irreducibility of $V$ we have the result.  
		\end{proof}
	
	\subsection{Costandard objects in $\mcal O$ and its realization}\label{sec: costandard obj}
	
	\subsubsection{Costandard modules} For $(\lambda,\mu,\mbf c)\in{\Lambda^+_{\mf g,X,\C^n}}$ , define the costandard $\mf L(\mf g,X_n)$-module corresponding to $(\lambda, \mu,\mbf c) $ as $$\nabla_X(\lambda,\mu,\mbf c):={\rm Hom}_{U(\mf L(\mf g,X_n)_{\geq0})}(U(
	\mf L(\mf g,X_n)), L^0_X(\lambda, \mu,\mbf c)),$$
	where $L^0_X(\lambda,\mu,\mbf c)$ is regarded as a $\mf L(\mf g,X_n)_{0}$-module with trivial $\mf L(\mf g,X_n)_{\geq 1}$-action. The module structure on $\nabla_X(\lambda,\mu,\mbf c)$ is given by:
	$$ (X.\phi)(Y)=\phi(YX)  \, \,\,\,  \forall \, X \in \mf L(\mf g,X_n), \, Y \in U(\mf L(\mf g,X_n)), \, \phi \in \nabla_X(\la,\mu,\mbf c).  $$
	From the next subsection (Proposition \ref{lem: simple socles}) it will clearly follow that $\nabla_X(\lambda,\mu,\mbf c) \in\mcal O$.

	\subsection{Realizations of costandard modules}	In this subsection, we introduce a concrete realization of costandard modules $\nabla_X(\lambda,\mu, \mbf c)$ for $(\lambda,\mu,\mbf c)\in{\Lambda^+_{\mf g, X,\C^n}}$.  For $(\la,\mu,\mbf c) \in {\Lambda^+_{\mf g, X,\C^n}}$ consider the irreducible $\mf L(\mf g,X)_0$-module  $L^0_X(\la,\mu,\mbf c)$ and set $$\mathcal{V}_{X}(\lambda,\mu,\mbf c)=A_n\otimes L^0_X(\lambda,\mu,\mbf c),$$
	then $\mathcal{V}_{X}(\lambda,\mu,\mbf c)$ is a $\mf L(\mf g,X_n)$ module under the actions defined in subsections, (3.3.1), (3.3.2) and (3.3.3).

	\begin{proposition}\label{pro: Costandard mod rel}
		$	\nabla_X(\lambda,\mu,\mbf c) \cong \mathcal{V}_X(\lambda,\mu, \mbf c)$ as $\mf L(\mf g,X_n)$-module.
	\end{proposition}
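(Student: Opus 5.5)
Since $\mf L(\mf g,X_n)=\mf L(\mf g,X_n)_{-1}\op\mf L(\mf g,X_n)_{\geq 0}$ and $\mf L(\mf g,X_n)_{-1}={\rm span}\{d_1,\dots,d_n\}$ is abelian, PBW gives $U(\mf L(\mf g,X_n))\cong U(\mf L(\mf g,X_n)_{-1})\ot U(\mf L(\mf g,X_n)_{\geq 0})=\C[d_1,\dots,d_n]\ot U(\mf L(\mf g,X_n)_{\geq0})$ as a (left $\C[d]$, right $U(\mf L(\mf g,X_n)_{\geq 0})$)-bimodule. Hence, as vector spaces,
\[
\nabla_X(\la,\mu,\mbf c)\cong {\rm Hom}_\C(\C[d_1,\dots,d_n],L^0_X(\la,\mu,\mbf c)),
\]
a function $\phi$ being determined by the values $\phi(d^a)$, $a\in\N^n$. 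Since the $\Z$-grading and the $\mf H$-weights make each graded piece finite dimensional, I will work with the graded dual, i.e.\ the functions supported on finitely many $d^a$ in each degree, which is what appears in $\mcal O$. The plan is to write down an explicit pairing: define
\[
\Psi:\mathcal V_X(\la,\mu,\mbf c)\to \nabla_X(\la,\mu,\mbf c),\qquad \Psi(t^s\ot v)(Y)=\bigl(\pi(Y\cdot(t^s\ot v))\bigr),
\]
where $\pi:A_n\ot L^0_X\to L^0_X$ is evaluation at $t=0$, i.e.\ projection onto $1\ot L^0_X$ (the constant term). This is the standard Frobenius-reciprocity map: $\mathcal V_X$ has the $\mf L(\mf g,X_n)_{\geq 0}$-equivariant map $\pi$ to $L^0_X$ (with $\mf L_{\geq 1}$ acting trivially), which induces a homomorphism to the coinduced module.

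The key steps, in order, are the following.

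First, I will check that $\pi$ is a $\mf L(\mf g,X_n)_{\geq0}$-module map. From the formulas in (3.3.1)–(3.3.3), an element of degree $k\geq 1$ (e.g.\ $t^rd_i$ with $|r|\ge 2$, $x\ot t^r$ or $t^rK_i$ with $|r|\ge1$) sends $t^s\ot v$ into polynomials of degree $\ge |s|+k\geq 1$, so it is killed by $\pi$. Degree-zero elements act on $1\ot v$ by the given $\mf L(\mf g,X_n)_0$-action on $v$, since $t_jd_i$ acts via $x_jd_i$ and the terms $t^rd_i(t^s)$ preserve degree. Hence $\pi(Y w)=Y\pi(w)$ for $Y\in U(\mf L_{\geq 0})$, so $\Psi(w)(Y'Y)=Y'\Psi(w)(Y)$ in the required sense and $\Psi(w)\in\nabla_X$. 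The module property $\Psi(Xw)(Y)=\pi(YXw)=(X.\Psi(w))(Y)$ is then immediate.

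Second, I will show that $\Psi$ is injective. If $\Psi(w)=0$, then the submodule generated by $w$ has zero projection to $1\ot L^0_X$. By Lemma~\ref{Lemma contain V }, any nonzero submodule contains $1\ot L^0_X$, so $w=0$.

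Third, I will show surjectivity by a degree/dimension count. Only $d^a$ with $|a|=|s|$ contributes in $\Psi(t^s\ot v)$, and $d_i$ acts on $t^s\ot v$ by $s_it^{s-e_i}\ot v$ (the correction terms vanish for $r=0$). Thus $\Psi(t^s\ot v)(d^a)=s!\,\delta_{a,s}v$, so $\Psi$ maps the degree-$k$ piece $A_n^{(k)}\ot L^0_X$ bijectively onto the functions supported on $\{d^a:|a|=k\}$. Consequently $\Psi$ is an isomorphism onto the graded costandard module, with degrees matching after the sign convention, and it lies in $\mcal O$ as Proposition~\ref{lem: simple socles} will use.

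The main obstacle is the first step in the $S_n$ and $H_n$ cases. There one must verify that the degree-zero part of the action in (3.3.2) and (3.3.3) on $1\ot v$ reproduces exactly the $(X_n)_0$-action under the identification $t_id_j\mapsto E_{ij}$. This requires that the second-derivative terms $d_kd_j(t^r)$, for $|r|=2$, give the correct operators $x_kd_i$. I would verify this on the spanning elements $d_{ij}(t_kt_l)$ and $d_H(t_kt_l)$ directly. A second subtlety is interpreting ${\rm Hom}$ as the restricted (graded) dual; I will state this convention explicitly.
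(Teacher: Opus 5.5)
Your argument is correct, and your $\Psi$ is in fact the paper's $\psi$. Since $d_i$ acts on $\mathcal V_X(\la,\mu,\mbf c)$ by differentiating the polynomial factor, $\Psi(t^s\ot v)(d^a)=\pi(d^a(t^s)\ot v)=d^a(t^s)|_{t=0}\,v$. What differs is how the homomorphism property is proved.

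The paper checks $\psi(Z\cdot w)=Z\cdot\psi(w)$ generator by generator. It uses the commutation formulas of Lemma~\ref{comp formula} for $x\ot t^r$ and $t^rK_j$, defers the vector fields to \cite{DSY20}, and writes out only $X_n=W_n$. You instead realize $\Psi$ as the Frobenius-reciprocity map attached to evaluation at $t=0$. Then both the $U(\mf L(\mf g,X_n)_{\geq 0})$-linearity of $\Psi(w)$ and the intertwining property are formal once $\pi$ is $\mf L(\mf g,X_n)_{\geq0}$-equivariant. That in turn reduces to two things: degree-$k$ elements raise polynomial degree by $k$, and a check of the degree-zero action on $1\ot L^0_X(\la,\mu,\mbf c)$.

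This route is shorter and makes the map canonical. It also genuinely handles $S_n$ and $H_n$, which the paper leaves to ``similar lines''. When you do the $H_n$ check, you will find that the last sum in the formula of subsection 3.3.3 must carry a minus sign, because $d_H(t_jt_k)=-(t_kd_{j-m}+t_jd_{k-m})$ for $m<j<k$. Your injectivity step via Lemma~\ref{Lemma contain V } is valid but redundant. The formula $\Psi(t^s\ot v)(d^a)=s!\,\delta_{a,s}v$ already gives bijectivity degree by degree, which is how the paper concludes.

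Your remark on the meaning of ${\rm Hom}$ is not pedantry; it is needed, and here you are more careful than the paper. Taken literally, $\nabla_X(\la,\mu,\mbf c)\cong{\rm Hom}_\C(\C[d_1,\dots,d_n],L^0_X(\la,\mu,\mbf c))\cong\prod_{a\in\N^n}L^0_X(\la,\mu,\mbf c)$. This is strictly larger than $\mathcal V_X(\la,\mu,\mbf c)\cong\bigoplus_{a\in\N^n}L^0_X(\la,\mu,\mbf c)$. The paper's assertion that $\nabla_X$ is spanned by the $f_{q,v,w}$ tacitly replaces the full Hom by the graded one.

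Do state your convention correctly, though. The graded dual consists of the functions vanishing on all but finitely many $d^a$, i.e. the direct sum over $k$ of the functions supported on $\{d^a:|a|=k\}$. By contrast, ``supported on finitely many $d^a$ in each degree'' is true of every function and describes the full product.
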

	The followings are preparation for the proof of  Proposition \ref{pro: Costandard mod rel}. Let us define  $\psi:\mathcal{V}_X(\lambda,\mu,\mbf c) \to 	\nabla_X(\lambda,\mu, \mbf c)$  by:
	$$  t^r  \ot v \ot w \mapsto \psi(t^r\ot v \ot w), \, \forall r \in \N^n, \, v \in V(\la), \, w \in L^0(\mu), $$
	where $$\psi(t^r\ot v \ot w)(d^a)=d^a(t^r)|_{(t_1,\cdots,t_n)=(0,\cdots,0)} v \ot w , \, \forall \, a \in \N^n,$$
	
	and $d^a=d_1^{a_1}\cdots d_n^{a_n}$. Note that 	$\psi(t^r\ot v \ot w)$ is well defined due to the fact 
	
	$$\nabla_X(\lambda,\mu,\mbf c)=	{\rm Hom}_{U(\mf L(\mf g,X_n)_{\geq0})}(U(\mf L(\mf g,X_n) ),L^0_X(\lambda,\mu,\mbf c)) \cong {\rm Hom}_{\C}	(U(\mf L(\mf g,X_n)_{-1}),  L^0_X(\lambda,\mu,\mbf c )).$$
	
	Now we prove that $\psi$ is a $\mf L(\mf g,X_n)$ module map. For that we record the following lemma which is easy to verify by induction.
	\begin{lemma}\label{comp formula} The following formulas holds:
		$$d^r(x\ot t^s)=\sum_{0\preceq\kappa\preceq r} C_\kappa^r P_\kappa^s x\ot t^{s -\kappa} d ^{r-\kappa},$$
		$$d^r( t^s K_j)=\sum_{0\preceq\kappa\preceq r} C_\kappa^r P_\kappa^s  t^{s -\kappa}K_j d ^{r-\kappa}, \, 1 \leq j \leq n,$$
		
		for all $r, s \in \N^n, \, x \in \mf g$.
	\end{lemma}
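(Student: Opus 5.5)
We prove both identities in $U(\mf L(\mf g,X_n))$ by induction on $|r|=r_1+\cdots+r_n$, using the multi-index notation the statement presupposes. For $0\preceq\kappa\preceq r$ set $C^r_\kappa=\prod_i\binom{r_i}{\kappa_i}$ and $P^s_\kappa=\prod_i s_i(s_i-1)\cdots(s_i-\kappa_i+1)$. Then $P^s_\kappa t^{s-\kappa}=d^\kappa(t^s)$, which vanishes automatically when $\kappa\not\preceq s$ by the convention $t^r=0$ for $r\notin\N^n$. The two formulas are proved identically: the only input is the bracket $[d_i,y\ot t^s]=s_i\,y\ot t^{s-e_i}$, which holds both for $y\ot t^s=x\ot t^s$ and for $t^sK_j$ by the defining brackets of $\mf L(\mf g,W_n)$ (with $r=0$). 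So we treat only the first.

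\textbf{Reduction to one variable.} The $d_i$ commute with each other. Moreover, $\operatorname{ad} d_i$ acts on the span of the $x\ot t^s$ as $\partial/\partial t_i$ on the polynomial factor, and these operators commute for different $i$. Since $d^r=d_1^{r_1}\cdots d_n^{r_n}$, it suffices to prove the one-variable identity
$$d_i^{k}\,(x\ot t^s)=\sum_{j=0}^{k}\binom{k}{j}\,(\operatorname{ad}d_i)^j(x\ot t^s)\,d_i^{k-j}.$$
We then apply it successively for $i=n,n-1,\dots,1$, each time pushing $d_i^{r_i}$ past the already-expanded elements $x\ot t^{s-\kappa'}$. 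The coefficients multiply out to $\prod_i\binom{r_i}{\kappa_i}$ and the derivatives compose to $d^\kappa(t^s)$, which gives exactly $C^r_\kappa P^s_\kappa\, x\ot t^{s-\kappa}$.

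\textbf{One-variable identity.} This is the standard Leibniz formula $D^kY=\sum_j\binom kj(\operatorname{ad}D)^j(Y)\,D^{k-j}$ in any associative algebra. It follows by induction on $k$. Write
$$D^{k+1}Y=D\sum_j\binom kj Y_jD^{k-j}, \qquad Y_j=(\operatorname{ad}D)^jY,$$
and use $DY_j=Y_{j+1}+Y_jD$. This produces the sum $\sum_j\bigl(\binom kj+\binom k{j-1}\bigr)Y_jD^{k+1-j}$, and Pascal's rule finishes the step. Here $D=d_i$ and $Y_j=P^{s}_{je_i}\,x\ot t^{s-je_i}$, as computed from the bracket above.

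The only point needing care is the bookkeeping when the variables are combined, so that the product of binomials and falling factorials matches $C^r_\kappa P^s_\kappa$. This works because $\operatorname{ad}d_i$ and $\operatorname{ad}d_j$ commute and $d_i$ commutes with $d_j$. Terms with $\kappa_i>s_i$ vanish, consistent with $P^s_\kappa=0$ there. No genuine obstacle is expected.
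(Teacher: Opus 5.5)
Your proof is correct and is essentially the induction the paper has in mind; the paper only says the formulas are ``easy to verify by induction,'' and your one-variable Leibniz expansion $D^kY=\sum_j\binom kj(\operatorname{ad}D)^j(Y)D^{k-j}$, combined over the commuting $d_i$, produces exactly the coefficients $C^r_\kappa P^s_\kappa$. Your reading of $C^r_\kappa$ as a product of binomials and $P^s_\kappa$ as a product of falling factorials (the paper never defines them) agrees with the paper's later use $C^{r+s}_rP^r_r\,s!=(r+s)!$.
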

	
	\begin{lemma}\label{Lg module map}
		$\psi$ is a $\mf L(\mf g,X_n)$ module map.
	\end{lemma}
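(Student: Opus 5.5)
We need to check that for every generator $X$ of $\mf L(\mf g,X_n)$ and every $t^s\ot v\ot w$,
$$\psi\big(X\cdot(t^s\ot v\ot w)\big)=X\cdot\psi(t^s\ot v\ot w)$$
as elements of ${\rm Hom}_\C(U(\mf L(\mf g,X_n)_{-1}),L^0_X(\la,\mu,\mbf c))$. By the identification recorded before Lemma \ref{comp formula}, an element $\phi\in\nabla_X(\la,\mu,\mbf c)$ is determined by its values on the PBW basis $\{d^a: a\in\N^n\}$ of $U(\mf L(\mf g,X_n)_{-1})=\C[d_1,\dots,d_n]$. So it suffices to evaluate both sides on $d^a$ for all $a$. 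The right side is $\phi(d^aX)$ with $\phi=\psi(t^s\ot v\ot w)$. We rewrite $d^aX$ as $\sum_j Y_j d^{b_j}$ with $Y_j\in U(\mf L(\mf g,X_n)_{\geq0})$. Since $\phi$ is $U(\mf L(\mf g,X_n)_{\geq 0})$-linear and $\mf L(\mf g,X_n)_{\geq1}$ kills $L^0_X$, only the terms with $Y_j\in\mf L(\mf g,X_n)_0$ (or scalars) survive.

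\textbf{Step 1: the elements $x\ot t^r$ and $t^rK_j$.} Here Lemma \ref{comp formula} does all the work. We have
$$d^a(x\ot t^r)=\sum_{\kappa\preceq a}C^a_\kappa P^r_\kappa\, x\ot t^{r-\kappa}d^{a-\kappa}.$$
Only $\kappa=r$ gives a degree-zero coefficient; the terms with $\deg(r-\kappa)>0$ act by zero. So the right side evaluated at $d^a$ equals $C^a_r P^r_r\, x\cdot\big(d^{a-r}(t^s)|_0\, v\ot w\big)$. The left side is $\psi(t^{r+s}\ot xv\ot w)(d^a)=d^a(t^{r+s})|_0\, xv\ot w$. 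Both are nonzero only when $a=r+s$, and then the two constants agree, since $C^{a}_rP^r_r\, s!=(r+s)!$ with $C$ binomial and $P^r_r=r!$. The $K_j$ case is identical, with $K_j$ acting by $c_j$.

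\textbf{Step 2: the vector fields.} We first treat $X_n=W_n$ and $X=t^rd_i$. A similar commutation formula holds:
$$d^a\, t^rd_i=\sum_{\kappa\preceq a}C^a_\kappa P^r_\kappa\, t^{r-\kappa}d_i\, d^{a-\kappa},$$
proved by the same induction as Lemma \ref{comp formula}. The surviving terms are those with $\deg(r-\kappa)\le1$. If $r-\kappa=0$, the term is $d_id^{a-r}$, which contributes $d^{a-r+e_i}(t^s)|_0$. If $r-\kappa=e_j$, the term is $t_jd_i\, d^{a-r+e_j}$, which contributes $(t_jd_i)\cdot$ applied to the value. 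These should match the two pieces of the defining formula for $\rho_W$ in (\ref{rho module W_n}): the term $t^rd_i(t^s)$ and the term $\sum_j d_j(t^r)t^s\ot x_jd_i v$. Here $x_jd_i$ corresponds to $t_jd_i\in(X_n)_0$. The cases $S_n$ and $H_n$ then follow either by restricting the $W_n$ computation, since $d_{ij}(t^r)$ and $d_H(t^r)$ are linear combinations of $t^{r'}d_k$, or by noting that their formulas (\ref{rho module S_n}) and (\ref{rho module H_n}) are exactly the $W_n$ formula rewritten. The one subtlety is that $L^0_X$ is only an $(X_n)_0$-module, not a $\mf{gl}_n$-module. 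So we must check that the combinations of $t_jd_i$ that appear in the $S$ and $H$ formulas actually lie in $(X_n)_0$, as the displayed formulas indicate.

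\textbf{Main obstacle.} The main obstacle is the bookkeeping in Step 2: checking that the binomial coefficients from the commutation formula, together with the evaluation at $t=0$, reproduce exactly $d^a\big(t^rd_i(t^s)\big)|_0$ and $d^{a}\big(d_j(t^r)t^s\big)|_0$. This amounts to the Leibniz rule $d^a(fg)|_0=\sum_\kappa C^a_\kappa\, d^\kappa f|_0\, d^{a-\kappa}g|_0$ applied with $f=t^r$ or $f=d_j(t^r)$. For $S_n$ and $H_n$ this also means regrouping the terms so that only $(X_n)_0$ elements act.
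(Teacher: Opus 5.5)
Your proposal is correct and follows the paper's route: evaluate both sides on the basis $d^a$ of $U(\mf L(\mf g,X_n)_{-1})$, commute $d^a$ past the generator (Lemma \ref{comp formula} and its vector-field analogue), and keep only the terms of degree $\le 0$. Your Step 1 is exactly the paper's computation for $x\ot t^r$ and $t^rK_j$. For $t^rd_i$, and for $S_n,H_n$, the paper only cites \cite[Proposition 3.4]{DSY20} and says ``similar lines,'' so your Leibniz-rule check fills that in; the clean way to phrase your ``restriction'' is that the surviving degree-zero terms $[(\ad d)^\kappa D]_0$ lie in $(X_n)_0$ automatically, so you redo the $W_n$ computation formally rather than restricting an actual $\mf{gl}_n$-module, which need not exist when $(X_n)_0=\mf{sp}_n$.
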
	
	\begin{proof}
	We prove this Lemma for $X_n=W_n$, other types follows with similar lines. Note that for all $r,s, \al \in \N^n$, $v \in V(\la)$ and $w \in L^0(\mu)$ we have, 
		\begin{align*}
			\psi(x\ot t^r\cd t^s\ot v \ot w)(d^\al) &= \psi( t^{r+s}\ot x\cd v \ot w)(d^\al) \cr
			&=d^\al(t^{r+s})|_{(t_1,\cdots,t_n)=(0,\cdots,0)}x\cd v\ot w \cr
			&=\de_{\al,r+s}(r+s)!x \cd v\ot w.
		\end{align*}
		Again, 
		\begin{align*}
			(x\ot t^r\cd \psi(t^s\ot v \ot w))(d^\al) &=  \psi(t^s\ot v \ot w)(d^\al \cd x \ot t^r) \cr
			&=  \psi(t^s\ot v \ot w)(\sum_{0\preceq\kappa\preceq\alpha} C_\kappa^\alpha P_\kappa^r x\ot t^{r -\kappa} d ^{\alpha-\kappa}) (\text{by Lemma \ref{comp formula}  }) \cr
			&= \sum_{0\preceq\kappa\preceq\alpha} C_\kappa^\alpha P_\kappa^r x\ot t^{r -\kappa} \cd (\psi(t^s\ot v \ot w)d ^{\alpha-\kappa}) \cr
			&=  C_r^\alpha P_r^r x\ot 1 \cd (d^{\al-r}(t^s)|_{(t_1,\cdots,t_n)=(0,\cdots,0)}v\ot w) \cr
			&= \de_{\al, r+s}C_r^{r+s} P_r^r x\ot 1 \cd (s!v \ot w) \cr
			&=\de_{\al, r+s}(r+s)!x \cd v\ot w ,
		\end{align*}	
		note that above equalities holds due to the fact $\psi(t^s\ot v \ot w)$ is a $U(\mf L(\mf g,X_n)_{\geq 0})$ module map and $U(\mf L(\mf g,X_n)_{\geq 1})$ acts trivially on $L^0_X(\la,\mu,\mbf c)$ and hence we get $	\psi(x\ot t^r\cd t^s\ot v \ot w)=x\ot t^r \cd 	\psi( t^s\ot v \ot w)$, for all	$r,s \in \N^n$, $v \in V(\la)$ and $w \in L^0(\mu).$ One can follow the same steps to show that $	\psi( t^rK_j\cd t^s\ot v \ot w)= t^rK_j \cd 	\psi( t^s\ot v \ot w)$, for all $1 \leq j \leq n$,	$r,s \in \N^n$, $v \in V(\la)$ and $w \in L^0(\mu).$
		Also following the computation of \cite[Proposition 3.4]{DSY20}, we can conclude that $\psi( t^rd_i \cd  t^s\ot v \ot w)= t^r d_i  \cd 	\psi( t^s\ot v \ot w)$, for all	$r,s \in \N^n$, $v \in V(\la)$ and $w \in L^0(\mu)$, which proves that $\psi$ is a $\mf L(\mf g,W_n)$ module map. 
		
	\end{proof}
	{\bf Proof of Proposition 2.10:}	
	With the help of Lemma \ref{Lg module map}, it is sufficient to prove that  $\psi$ is a bijection. Note that as a vector space $\nabla_X(\la,\mu,\mbf c) \cong {\rm Hom}_{\C}	(U(\mf L(\mf g,X_n)_{-1}),  L^0_X(\lambda,\mu,\mbf c )) \cong {\rm Hom}_{\C}	(U(\mf L(\mf g,X_n)_{-1}), \C) \ot  L^0_X(\lambda,\mu ,\mbf c) $ and hence $\nabla_X(\la,\mu,\mbf c)$ is spanned by $\{f_{q,v,w}:q=(q_1,\dots,q_n)\in \N^n,\ v \in V(\la), \, w \in L^0(\mu)  \},$ where $	f_{q,v,\mu}(d^\al)$ is defined by \\
	$$	f_{q,v,w}(d^\al)=\de_{\al,q}v\ot w , \ \al \in \N^n,v \in V(\la), \, w \in L^0(\mu).  $$
	This proves that $\psi$ is surjective since $cf_{q,v,\mu}$ is the image of $t^q \ot v\ot w$ for some non-zero scalar $c$. To prove $\psi$ is injective consider the relation $\psi(\dis{\sum_{i=1}^{k}} f_i \ot v_i )=0 , \ f_i \in A_n , \ v_i \in L^0(\la,\mu,\mbf c)$ and $f_i'$s are homogeneous. Then we operate $\psi(\dis{\sum_{i=1}^{k}} f_i \ot v_i )$ on $d^\al$ for various $\al \in \N^n$, which concludes that $v_i=0$ for all $i$. This completes the proof.
	
		{

		{\begin{proposition}\label{description of socle}\label{lem: simple socles}
				The costandard module $\nabla_X(\la,\mu,\mbf c)$ are objects of category $\mcal O$. Further $\nabla_X(\la,\mu,\mbf c)$ admits a simple socle which is isomorphic to $L_X(\la,\mu,\mbf c)$.
			\end{proposition}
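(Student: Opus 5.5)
By Proposition \ref{pro: Costandard mod rel}, it suffices to work with the concrete realization $\mathcal V_X(\la,\mu,\mbf c)=A_n\ot L^0_X(\la,\mu,\mbf c)$ and the action $\rho_X$ of subsections (3.3.1)--(3.3.3).

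\textbf{Membership in $\mcal O$.} Grade $\mathcal V_X$ by $(\mathcal V_X)_i=A_{n,i}\ot L^0_X(\la,\mu,\mbf c)$, where $A_{n,i}$ is the span of monomials of degree $i$. Each $(\mathcal V_X)_i$ is finite dimensional. The defining formulas show that an element of $\mf L(\mf g,X_n)_j$ maps $(\mathcal V_X)_i$ into $(\mathcal V_X)_{i+j}$. For example, $t^rd_k$ with $\deg t^r=j+1$ sends $t^s\ot v$ to terms of the form $t^rd_k(t^s)$ and $d_l(t^r)t^s$, both of degree $i+j$.

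Weight semisimplicity also follows from the formulas. The element $t_ld_l\in\mf h_X$ acts on $t^s\ot v$ by $s_l\,(t^s\ot v)+t^s\ot (t_ld_l)\cd v$. Similar formulas hold for the Cartan elements in types $S$ and $H$, and $\mf h$ acts only on the second factor. Hence each $t^s\ot v$, with $v$ a weight vector, is an $\mf H$-weight vector.

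For local finiteness over ${\rm P}_X$: the degree-$0$ part preserves each finite-dimensional space $(\mathcal V_X)_i$, and the degree $-1$ part maps it into $(\mathcal V_X)_{i-1}$. Thus ${\rm P}_X$ preserves the finite-dimensional space $\bigoplus_{i\le N}(\mathcal V_X)_i$ for every $N$. This gives (1)--(3) of Definition \ref{category O}.

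\textbf{Simple socle.} The key input is Lemma \ref{Lemma contain V }: every nonzero submodule of $\mathcal V_X$ contains $1\ot L^0_X(\la,\mu,\mbf c)$. Consequently, the submodule $S:=U(\mf L(\mf g,X_n))\cd(1\ot L^0_X(\la,\mu,\mbf c))$ is contained in every nonzero submodule. This makes $S$ the unique minimal submodule, so $S$ is simple and equals the socle.

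It remains to identify $S$. In degree $0$, the space $1\ot L^0_X(\la,\mu,\mbf c)$ is an $\mf L(\mf g,X_n)_0$-module. Using the formulas with $\deg t^r=1$ for $t^rd_i$, it is isomorphic to $L^0_X(\la,\mu,\mbf c)$: the term $t^rd_i(1)$ vanishes, and what remains is exactly the $(X_n)_0$-action. It is also annihilated by $\mf L(\mf g,X_n)_{-1}$, since $d_i(1)=0$ and $d_j(1)=0$ kill every term. Hence it is a quotient of ${}^0L^0_X$ induced up, and by the universal property there is a nonzero map ${}^0\Delta_X(\la,\mu,\mbf c)\to S$. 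It is surjective because $S$ is generated by degree $0$. Since $S$ is simple, Proposition \ref{lem1}(2) gives $S\cong L_X(\la,\mu,\mbf c)$.

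\textbf{Main obstacle.} The main point needing care is the verification, in types $S$ and $H$, that the degree-$0$ elements of $X_n$ act on $1\ot V$ precisely through the chosen identification $(X_n)_0\to\mf{sl}_n,\mf{sp}_n$, so that $1\ot V\cong L^0_X$ rather than a twist of it. I would check this directly from the second-derivative terms in (3.3.2) and (3.3.3) applied to quadratic $t^r$. Alternatively, one can bypass the explicit check by transporting the socle statement through $\psi$ of Proposition \ref{pro: Costandard mod rel}: in $\nabla_X$, evaluation at $d^0=1$ gives an $\mf L(\mf g,X_n)_0$-map onto $L^0_X(\la,\mu,\mbf c)$ by construction.
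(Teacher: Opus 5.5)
Your proposal is correct and takes the same route as the paper. You realize $\nabla_X(\la,\mu,\mbf c)$ as $\mcal V_X(\la,\mu,\mbf c)$ via Proposition \ref{pro: Costandard mod rel}, use the lemma that every nonzero submodule contains $1\ot L^0_X(\la,\mu,\mbf c)$ to get a unique minimal submodule $U(\mf L(\mf g,X_n))\cd(1\ot L^0_X(\la,\mu,\mbf c))$, and identify it as a simple quotient of $\Delta_X(\la,\mu,\mbf c)$ by Proposition \ref{lem1}(2). Beyond the paper, you verify the axioms of $\mcal O$ that the paper calls clear, and you correctly take the surjection from $\Delta_X(\la,\mu,\mbf c)$, where the paper's text writes $\nabla_X(\la,\mu,\mbf c)\to S$, evidently a typo.
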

			
			\begin{proof}
				By proposition \ref{pro: Costandard mod rel}, $\nabla_X(\la,\mu,\mbf c) \cong \mcal V_X(\la,\mu,\mbf c),$ which clearly implies that $\nabla_X(\la,\mu,\mbf c)$ $ \in  \mcal O$.  Now by Lemma \ref{Lemma contain V }, every non-zero submodule of $\mcal V_X(\la,\mu,\mbf c)$ contains $L^0_X(\la,\mu,\mbf c)$. Let $S$ be any irreducble submodule of $\mcal V_X(\la,\mu,\mbf c)$. We claim that $S =U(\mf L(\mf g, X_n))(L^0_X(\la,\mu,\mbf c))$. First note that $U(\mf L(\mf g, X_n))$ $(L^0_X(\la,\mu,\mbf c))$ is irreducible, since if  $W$ be a non zero submodule of  $U(\mf L(\mf g, X_n))(L^0_X(\la,\mu,\mbf c))$, then $W$ is a non-zero submodule of  $\mcal V_X(\la,\mu,\mbf c)$, hence contains $L^0_X(\la,\mu,\mbf c)$, so $W=U(\mf L(\mf g, X_n))(L^0_X(\la,\mu,\mbf c))$. Also it is clear that any non-zero submodule contains $U(\mf L(\mf g, X_n))$ $(L^0_X(\la,\mu,\mbf c))$, so the claim follows. In particular, $\mcal V_X(\la,\mu,\mbf c)$ contains only one irreducible submodule. Thus to complete the proof we need to show that $S \cong L_X(\la,\mu,\mbf c)$. For this one need to observe that $\nabla_X(\la,\mu,\mbf c) \to S$, $u \ot v \mapsto u \cd v$ define a surjective module map, for all $u \in U(\mf L(\mf g,X_n)), v \in L^0_X(\la,\mu,\mbf c).$ Now the isomorphism follows from  Proposition \ref{lem1}.
			\end{proof}
		}
	}
	\subsection{Exceptional costandard modules for $\mf L(\mf g,X_n)$}
Denote $n_X=n, n-1$ or $m$, according as $X_n=W_n,S_n $ or $H_n$. We call $\mcal V_X(0,\mu_k,\mbf 0)$ as exceptional modules for $\mf L(\mf g,X_n)$, where
	
$$ 	\mu_k=\begin{cases}
		\epsilon_1+\cdots+\epsilon_k, \ \ \text{for }\, X_n=W_n. \,\, \\
		\epsilon_1+\cdots+\epsilon_k, \ \ \text{for }\, X_n=S_n. \,\,\\
		\epsilon_1+\cdots+\epsilon_k, \ \ \text{for }\, X_n=H_n \,\,  ,\\
	\end{cases} 
	$$
for all $ 1 \leq k \leq n_X$, where	
	  $\epsilon_j:\mf h_W \to \C$ is the linear map $\epsilon_j(t_id_i)=\de_{ij}$, for all $1 \leq i,j \leq n$ and $\mu_0=0$. We call the tuple $(0,\mu_k,\mbf 0)$, for $0 \leq k \leq n_X$ as exceptional weights for $\mf L(\mf g,X_n)$ and the corresponding modules as exceptional modules. 
	 
\subsubsection{\bf Exceptional modules for $W_n$}	Note that for $X_n=W_n$ with  $(\la,\mu, \mbf c)=(0,\mu_k,\mbf 0)$ with $0\leq k \leq n$,  $L^0_W(0,\mu_k, \mbf 0)$ has a realization: 
	$$ L^0_W(0,\mu_k, \mbf 0)\cong \C\ot \wedge^k\C^n.     $$
	Hence from this point we treat $\mcal V_W(0, \mu_k, \mbf 0)$ as $A_n \ot \C \ot \wedge^k\C^n,$ for $0 \leq k \leq n$.

	Now we can define a de Rahm complex of $\mf L(\mf g,W_n)$ modules $\mcal V(0, \mu_k,\mbf 0)$, given by:
	\begin{align*}\label{eq: type I complex}
		\mathcal{V}(0,\mu_0,\mbf 0)\xrightarrow{\,\,\,\mf d_0\,\,\,}\mathcal{V}(0,\mu_1, \mbf 0)\xrightarrow{\,\,\,\mf d_1\,\,\,}\cdots\cdots \mathcal{V}(0,\mu_k,\mbf 0)\xrightarrow{\,\,\,\mf d_k\,\,\,}\mathcal{V}(0,\mu_{k+1}, \mbf 0)\xrightarrow{\mf d_{k+1}}\cdots\cdots \\ \mathcal{V}(0,\mu_{n-1}, \mbf 0) \xrightarrow{\mf d_{n-1}}
		\mathcal{V}(0,\mu_{n}, \mbf 0)\longrightarrow 0,
	\end{align*}   
	
	where $\mf d_k: \mathcal{V}(0,\mu_{k},\mbf 0) \to \mathcal{V}(0,\mu_{k+1},\mbf 0)$ is defined by:
	
	$$\mf d_k:\mathcal{V}(0,\mu_k,\mbf 0)\longrightarrow \mathcal{V}(0,\mu_{k+1},\mbf 0)$$
	\[t^{r}\otimes 1 \ot (e_{j_1}\wedge\cdots\wedge e_{j_k})\longmapsto\sum\limits_{i=1}^nd_i(t^{r})\otimes 1 \ot (e_{j_1}\wedge\cdots\wedge e_{j_k}\wedge e_i) ,\]
	
	where $e_{1},\cdots, e_{n}$ is the basis of $\C^n$ and $r\in {\mathbb{N}}^n.$ One should observe that $\mcal V(0, \mu_k, \mbf 0)$  are basically modules for the Witt algebra $W_n$, since $\mf g$ and $K_i$ acts trivially on $L^0(0,\mu_k,\mbf 0)$. Now it follows from \cite{Shen1,Shen2,Shen3,DSY20, Rud} that $\mf d_k$ is a $\mf L(\mf g,W_n)$ module map and further these maps are exact. We have the following proposition. 
	\begin{proposition}\label{pro V(0,mu,0) red}
		
		\begin{itemize}

			\item[(1)]	$\mcal V_W(0,\mu_k, \mbf 0)$ is irreducible if and only if $k=n$.
			
			\item[(2)] For $0 \leq k \leq n-1$, $\mcal V_W(0,\mu_k,\mbf 0)$ has two composition factors isomorphic to $L_W(0,\mu_k,\mbf 0)$ and $L_W(0,\mu_{k+1},\mbf 0)$ with free multiplicity. Moreover, $L_W(0,\mu_0,\mbf 0)$ is one dimensional trivial module.

		\end{itemize}
	\end{proposition}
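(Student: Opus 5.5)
The plan is to reduce everything to the polynomial de Rham complex for $W_n$, using exactness of the maps $\mf d_k$ (quoted from \cite{Shen1,Shen2,Shen3,DSY20,Rud}) and the simple-socle statement of Proposition \ref{lem: simple socles}. Since $\mf g$ and the $K_i$ act by zero on $L^0_W(0,\mu_k,\mbf 0)\cong \C\ot\wedge^k\C^n$, all submodule questions are really questions about $W_n$-modules. I will use that $\mf g\ot A_n$ and $\mcal Z$ act by zero on $\mcal V_W(0,\mu_k,\mbf 0)$: by the defining formulas, $x\ot t^r$ and $t^rK_i$ act through $x\cd v$ and $K_i\cd v$, which vanish.

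\textbf{Step 1: reducibility for $k<n$.} The map $\mf d_k$ is a nonzero module map. It is nonzero because $\mf d_k(t_i\ot 1\ot e_{j_1}\wedge\cdots\wedge e_{j_k})=1\ot 1\ot e_{j_1}\wedge\cdots\wedge e_{j_k}\wedge e_i$ for $i\notin\{j_1,\dots,j_k\}$. Its kernel is therefore a proper submodule. The kernel is also nonzero: it contains $1\ot 1\ot e_{j_1}\wedge\cdots\wedge e_{j_k}$, which is killed by $\mf d_k$ since $d_i(1)=0$. Hence $\mcal V_W(0,\mu_k,\mbf 0)$ is reducible for $0\le k\le n-1$.

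\textbf{Step 2: the composition factors for $k<n$.} By Lemma \ref{Lemma contain V } and Proposition \ref{lem: simple socles}, the socle $S_k:=U(\mf L(\mf g,W_n))(1\ot L^0)$ is the unique simple submodule and is isomorphic to $L_W(0,\mu_k,\mbf 0)$. I claim that $S_k=\ker\mf d_k$ and that this kernel is simple. By exactness, $\ker\mf d_k={\rm Im}\,\mf d_{k-1}$ for $k\ge1$, and $\ker \mf d_0=\C$. For $k=0$, $\ker\mf d_0=\C\ot 1\ot 1$ is the trivial one-dimensional module, which gives the last sentence of (2). For $k\ge1$, ${\rm Im}\,\mf d_{k-1}$ is a quotient of $\mcal V_W(0,\mu_{k-1},\mbf 0)$ by $\ker\mf d_{k-1}$. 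Arguing by induction on $k$, that kernel is the unique maximal-type piece, so the image is simple. Consequently $\ker\mf d_k$ is simple, contains $1\ot L^0$, and equals $S_k\cong L_W(0,\mu_k,\mbf 0)$.

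The cokernel side is the part of Step 2 I expect to be the \textbf{main obstacle}: $\mcal V_W(0,\mu_k,\mbf 0)/\ker\mf d_k\cong{\rm Im}\,\mf d_k=\ker\mf d_{k+1}$, and I need this to be simple and isomorphic to $L_W(0,\mu_{k+1},\mbf 0)$. Identifying it with $\ker\mf d_{k+1}=S_{k+1}$ requires proving that $\ker\mf d_j$ is simple for every $j$; the induction alone is circular. I would break this with a direct argument. Let $W\subseteq\ker\mf d_{k+1}$ be a nonzero submodule. Applying the $d_i$ repeatedly, as in Lemma \ref{Lemma contain V }, pushes any element of $W$ down to degree zero, so $W$ contains $1\ot L^0(\mu_{k+1})$. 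On the other hand, $\ker\mf d_{k+1}$ is spanned by the exact forms $\mf d_k(\cdot)$, and these are generated over $U(W_n)$ by $\mf d_k(t_i\ot e_J)$, that is, by the degree-zero part. The generation claim is where the work lies; it is a computation using $t^rd_j$ acting on constant forms, as in \cite{DSY20}. Granting it, $\ker\mf d_{k+1}=U(1\ot L^0)=S_{k+1}$ is simple and isomorphic to $L_W(0,\mu_{k+1},\mbf 0)$ by Proposition \ref{lem: simple socles}. This yields exactly two composition factors, each with multiplicity one.

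\textbf{Step 3: irreducibility for $k=n$.} Here $\mf d_{n-1}$ is surjective, because every $n$-form $f\,e_1\wedge\cdots\wedge e_n$ is exact: take an antiderivative in $t_1$. Thus $\mcal V_W(0,\mu_n,\mbf 0)={\rm Im}\,\mf d_{n-1}=\ker\mf d_n$, which is simple by the same generation argument. This also follows from Step 2 with $k=n-1$, since the quotient by a simple kernel is the simple $S_n$.
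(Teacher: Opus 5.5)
Your route is sound, and it differs from the paper's. The paper proves nothing directly here. After noting that $\mf g\ot A_n$ and $\mcal Z$ act trivially, so that $\mcal V_W(0,\mu_k,\mbf 0)$ is just the $W_n$-module of polynomial $k$-forms, it quotes \cite[Theorem 2.4]{Shen3} for both (1) and (2).

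You instead get the structure from exactness of the de Rham complex together with the paper's own socle result (Proposition \ref{lem: simple socles}). The socle $S_k=U(\mf L(\mf g,W_n))(1\ot L^0)$ equals $\ker\mf d_k={\rm Im}\,\mf d_{k-1}$ (it is $\C\ot 1\ot 1$ when $k=0$). The head is $\mcal V_W(0,\mu_k,\mbf 0)/\ker\mf d_k\cong{\rm Im}\,\mf d_k=S_{k+1}$. This costs you the polynomial Poincar\'e lemma, which the paper also quotes. In return you get more than the citation gives: $\mcal V_W(0,\mu_k,\mbf 0)$ is uniserial of length two, and its factors are identified with $L_W(0,\mu_k,\mbf 0)$ and $L_W(0,\mu_{k+1},\mbf 0)$ directly in the paper's parametrization, with no need to translate Shen's conventions.

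The one step you ``grant'' is immediate and should be written out. Take $|J|=k\le n-1$ and any $i_0\notin J$. The element $t_jd_{i_0}$ acts on $\wedge^{k+1}\C^n$ by $e_{i_0}\mapsto e_j$ and kills every other $e_l$. So the defining formula with $s=0$ gives
\[
t^rd_{i_0}\cd(1\ot 1\ot e_J\wedge e_{i_0})=\sum_{j=1}^n d_j(t^r)\ot 1\ot e_J\wedge e_j=\mf d_k(t^r\ot 1\ot e_J).
\]
Hence ${\rm Im}\,\mf d_k\subseteq S_{k+1}$. Equality holds because the submodule ${\rm Im}\,\mf d_k$ contains every $1\ot 1\ot e_J\wedge e_i=\mf d_k(t_i\ot 1\ot e_J)$, and therefore all of $1\ot L^0$.

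With this identity, the tentative induction in your Step 2 (the ``unique maximal-type piece'' sentence) becomes unnecessary. That sentence is not an argument as written, since a quotient by the socle need not be simple. Step 3 is also immediate without the surjectivity of $\mf d_{n-1}$. On $\wedge^n\C^n$ the element $t_jd_i$ acts by $\delta_{ij}$, so $t^rd_i$ sends $1\ot 1\ot e_1\wedge\cdots\wedge e_n$ to $d_i(t^r)\ot 1\ot e_1\wedge\cdots\wedge e_n$. Combine this with the fact that every nonzero submodule of $A_n\ot V$ contains $1\ot V$.
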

	\begin{proof}
		Note that (1) and (2) follows from [\cite{Shen3}, Theorem 2.4]. (3) follows from Proposition \ref{description of socle} with the help of (2).
	\end{proof}

\subsubsection{\bf Exceptional modules for $S_n$ and $H_n$} Following subsection 2.6.1 and  \cite[Theorem 3.1, Theorem 3.2]{Shen3},  \cite[Theorem 4.8, Theorem 5.10]{Rud2} and \cite[Theorem 4.5, Theorem 4.6]{DSY20} we can conclude the following two proposition for $\mf L(\mf g, S_n)$ modules. 

	\begin{proposition}\label{pro V(0,mu,0) S}
	
	\begin{itemize}

		\item[(1)] For $0 \leq k \leq n-1$, $\mcal V_S(0,\mu_k,\mbf 0)$ has two composition factors isomorphic to $L_S(0,\mu_k,\mbf 0)$ and $L_S(0,\mu_{k+1},\mbf 0)$  with free multiplicity. Moreover, $L_S(0,\mu_0,\mbf 0)$ is one dimensional trivial module.
		
	\end{itemize}
\end{proposition}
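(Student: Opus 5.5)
The plan is to reduce to the known theory of Shen--Larsson modules for $S_n$ alone and then use the socle description in Proposition \ref{description of socle}. For the exceptional weights $(0,\mu_k,\mbf 0)$, the module $L^0_S(0,\mu_k,\mbf 0)$ is $\C\ot\wedge^k\C^n$ restricted to $(S_n)_0\cong\mf{sl}_n$. On it both $\mf g$ (weight $0$) and every $K_i$ (since $\mbf c=\mbf 0$) act by zero. Then, by the formulas in subsection 3.3.2, $\mf g\ot A_n$ and $\mcal Z$ act trivially on all of $\mcal V_S(0,\mu_k,\mbf 0)=A_n\ot\wedge^k\C^n$. So $\mcal V_S(0,\mu_k,\mbf 0)$ is the inflation of the classical $S_n$-module of differential $k$-forms along the surjection $\mf L(\mf g,S_n)\to S_n$. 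Its $\mf L(\mf g,S_n)$-submodules are exactly its $S_n$-submodules, and the same holds for its subquotients.

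The key steps are as follows.

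\emph{Step 1.} Restrict the de Rham differentials $\mf d_k$ from the $W_n$ case to $S_n$. They stay $\mf L(\mf g,S_n)$-module maps, since they commute with $W_n$, and the complex stays exact in positive degrees. Exactness is a statement about vector spaces: it is the polynomial Poincar\'e lemma.

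\emph{Step 2.} Invoke \cite[Theorem 3.1, Theorem 3.2]{Shen3}, \cite[Theorem 4.8, Theorem 5.10]{Rud2} and \cite[Theorem 4.5, Theorem 4.6]{DSY20}. For $0\le k\le n-1$, the image ${\rm Im}\,\mf d_k$, equivalently $\ker\mf d_{k+1}$, is an irreducible $S_n$-submodule of $\mcal V_S(0,\mu_{k+1},\mbf 0)$. Likewise $\ker\mf d_k\subset \mcal V_S(0,\mu_k,\mbf 0)$ is irreducible, and the quotient $\mcal V_S(0,\mu_k,\mbf 0)/\ker\mf d_k\cong{\rm Im}\,\mf d_k$ is irreducible. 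This gives a composition series of length two.

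\emph{Step 3.} Identify the two factors. The submodule $\ker\mf d_k$ is nonzero: it contains $1\ot\wedge^k\C^n$ when $k\ge1$, and the constants when $k=0$. It is irreducible, so by Proposition \ref{description of socle} it is the simple socle, isomorphic to $L_S(0,\mu_k,\mbf 0)$. The quotient ${\rm Im}\,\mf d_k$ is a nonzero irreducible submodule of $\mcal V_S(0,\mu_{k+1},\mbf 0)$. Applying Proposition \ref{description of socle} again, it is the socle of that module, hence isomorphic to $L_S(0,\mu_{k+1},\mbf 0)$. Each factor therefore occurs exactly once.

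\emph{Step 4.} For $k=0$, $\ker\mf d_0$ consists of the constants $\C\ot1$. Since all of $\mf L(\mf g,S_n)$ kills it, this is the one-dimensional trivial module $L_S(0,\mu_0,\mbf 0)$.

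The main obstacle is Step 2: the irreducibility of ${\rm Im}\,\mf d_k$ and of $\ker\mf d_k$ for the smaller algebra $S_n$. For $S_n$ the top form degree $k=n-1$ (where $n_S=n-1$) needs care, because the divergence-free constraint changes which vectors can be reached from $1\ot\wedge^k\C^n$. I would first try to cite the results above directly. If a self-contained argument is needed, I would take a nonzero element of the submodule and apply the $d_i$ (which lie in $S_n$) to push its degree down to $0$, as in Lemma \ref{Lemma contain V }. I would then regenerate all of ${\rm Im}\,\mf d_k$ from degree $0$ using the elements $d_{ij}(t^r)$, checking this on weight vectors with respect to $\mf h_S$.
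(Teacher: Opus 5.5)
\paragraph{Gap at $k=n-1$.} Your outline follows the paper's own route: the paper notes that $\mf g\ot A_n$ and $\mcal Z$ act trivially and then cites the $S_n$ literature. However, Step 2 is false for $k=n-1$, and so is the statement itself in that case.

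By the polynomial Poincar\'e lemma, $\mathrm{Im}\,\mf d_{n-1}$ is all of $\mcal V_S(0,\mu_n,\mbf 0)=A_n\ot\wedge^n\C^n$. But $\wedge^n\C^n$ is the trivial $\mathfrak{sl}_n$-module: $\mu_n(t_id_i-t_jd_j)=0$, so $\mu_n=\mu_0$ on $\mf h_S$. The formula of subsection 3.3.2 therefore collapses to $d_{ij}(t^r)\cdot(t^s\ot v)=d_{ij}(t^r)(t^s)\ot v$. So this target is just $A_n$ with $S_n$ acting by derivations.

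The line $\C(1\ot\wedge^n\C^n)$ is a trivial submodule; it is the volume form, which divergence-free fields preserve. The quotient is $A_n/\C\cong\mathrm{Im}\,\mf d_0\cong L_S(0,\mu_1,\mbf 0)$. Consequently $\mf d_{n-1}^{-1}\bigl(\C(1\ot\wedge^n\C^n)\bigr)$ is a submodule lying strictly between $\ker\mf d_{n-1}$ and $\mcal V_S(0,\mu_{n-1},\mbf 0)$.

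The composition factors of $\mcal V_S(0,\mu_{n-1},\mbf 0)$ are therefore $L_S(0,\mu_{n-1},\mbf 0)$, the trivial module $L_S(0,\mu_0,\mbf 0)$, and $L_S(0,\mu_1,\mbf 0)$. That is three factors, not two. For $n=2$ the first and third coincide, so the module is not even multiplicity free.

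A concrete model: as an $S_n$-module, $\mcal V_S(0,\mu_{n-1},\mbf 0)$ is $W_n$ under the adjoint action. The divergence gives the filtration $S_n\subset S_n\op\C\sum_i t_id_i\subset W_n$.

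\paragraph{Where the obstacle actually lies.} The top-degree difficulty you flagged is real but misplaced. The bottom factor $\ker\mf d_{n-1}$, which is $S_n$ itself in the model above, causes no trouble. The problem is that the target $\mcal V_S(0,\mu_n,\mbf 0)$ of $\mf d_{n-1}$ is reducible for $S_n$, even though $\mcal V_W(0,\mu_n,\mbf 0)$ is irreducible for $W_n$.

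The same error appears in Step 3. Proposition \ref{description of socle} applied to $\mcal V_S(0,\mu_n,\mbf 0)$ identifies its socle as the constants, not the whole image of $\mf d_{n-1}$.

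Granting the cited irreducibility of $\mathrm{Im}\,\mf d_k$, your argument proves the statement only for $0\le k\le n-2$. For $k=n-1$ the conclusion itself must be changed as described above; citing references cannot rescue it as stated.
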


	\begin{proposition}\label{pro V(0,mu,0) H}
	
	\begin{itemize}
		\item[(1)] For $0 \leq k \leq m$, $\mcal V_H(0,\mu_k,\mbf 0)$ has composition factors isomorphic to $L_H(0,\mu_{k-1},\mbf 0)$, $L_H(0,\mu_k,\mbf 0)$,  $L_H(0,\mu_{k+1},\mbf 0)$ with $[\mcal V_H(0,\mu_k,\mbf 0):L_H(0,\mu_{k-1},\mbf 0)] = [\mcal V_H(0,\mu_k,\mbf 0):L_H(0,\mu_{k+1},\mbf 0)]=1$ and $[\mcal V_H(0,\mu_k,\mbf 0):L_H(0,\mu_{k},\mbf 0)]$ $=2$, where $L_H(0,\mu_{-1},\mbf 0)=L_H(0,\mu_{m+1},\mbf 0)= 0 $ and $L_H(0,\mu_{0},\mbf 0)$ is one dimensional trivial module.
		
	\end{itemize}
\end{proposition}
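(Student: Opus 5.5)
The plan is to reduce the statement to the known structure of Shen--Larsson modules for the Hamiltonian Lie algebra $H_n$ itself. This mirrors what was done for Proposition \ref{pro V(0,mu,0) red} and Proposition \ref{pro V(0,mu,0) S}.

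\textbf{Step 1: reduction to $H_n$.} In $\mcal V_H(0,\mu_k,\mbf 0)=A_n\ot \C\ot L^0(\mu_k)$ the algebra $\mf g$ acts by $\la=0$ and the central elements by $\mbf c=\mbf 0$. By the defining formulas of $\rho_H$ in subsection 3.3.3, every element $x\ot t^r$ and $t^rK_i$ therefore acts by zero. Hence the ideal $\mf g\ot A_n\op\mcal Z$ acts trivially, and the $\mf L(\mf g,H_n)$-submodule lattice coincides with the $H_n$-submodule lattice. The same holds for each $L_H(0,\mu_j,\mbf 0)$: it is the socle of $\mcal V_H(0,\mu_j,\mbf 0)$ by Proposition \ref{lem: simple socles}, so the ideal acts trivially on it too. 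Consequently the composition factors of $\mcal V_H(0,\mu_k,\mbf 0)$ as an $\mf L(\mf g,H_n)$-module are exactly its $H_n$-composition factors, with inflated labels.

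\textbf{Step 2: the $H_n$-structure.} I would identify $L^0(\mu_k)$ with the $\mf{sp}_{2m}$-module of primitive $k$-forms $\wedge^k_0\C^{2m}$. Then $\wedge^k\C^{2m}=\bgop_{j}\omega^j\wedge\wedge^{k-2j}_0$, where $\omega=\sum_i dt_i\wedge dt_{m+i}$. Compose the de Rham differential with the primitive projection, and with Lefschetz-type contraction. This gives $H_n$-maps
\[\mcal V_H(0,\mu_{k-1},\mbf 0)\to\mcal V_H(0,\mu_k,\mbf 0)\to\mcal V_H(0,\mu_{k+1},\mbf 0).\]
I expect these to be nonzero intertwiners whose images and kernels form a filtration of length four, with the following layers:
\begin{itemize}
\item the socle $L_H(0,\mu_k,\mbf 0)$;
\item a layer $L_H(0,\mu_{k-1},\mbf 0)$, coming from the image of $\mcal V_H(0,\mu_{k-1},\mbf 0)$ modulo its own socle;
\item a layer $L_H(0,\mu_{k+1},\mbf 0)$;
\item a top layer $L_H(0,\mu_k,\mbf 0)$.
\end{itemize}
That these layers are exactly the stated irreducibles, with those multiplicities, is the content of \cite[Theorem 4.8, Theorem 5.10]{Rud2}, \cite[Theorem 3.2]{Shen3} and \cite[Theorem 4.6]{DSY20}. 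I would invoke those results rather than reprove them, checking that their highest weights match $\mu_{k-1},\mu_k,\mu_{k+1}$ in the conventions of $\mf h_H$.

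\textbf{Step 3: boundary cases and the trivial module.} For $k=0$ and $k=m$, the terms $\mu_{-1}$ and $\mu_{m+1}$ are absent and the corresponding layers vanish. For $k=0$, $\mcal V_H(0,0,\mbf 0)=A_n$ and the constants $1\ot\C$ form a submodule killed by all of $H_n$, since $d_H(t^r)$ kills constants. By Proposition \ref{lem: simple socles} this submodule is the socle $L_H(0,\mu_0,\mbf 0)$, so that socle is one dimensional and trivial.

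\textbf{Main obstacle.} The hardest step is Step 2. The $\mf{sp}$-primitive decomposition makes the $H$-type complex non-exact in a subtle way, and one must get the multiplicity exactly $2$ for $L_H(0,\mu_k,\mbf 0)$, not $1$ as in types $W$ and $S$. If the citations do not directly give multiplicities for the inflated module, I would compute graded $\mf h_X$-characters. Comparing $\ch\,\mcal V_H(0,\mu_k,\mbf 0)$ with the characters of the images of the intertwiners, together with the socle uniqueness of Proposition \ref{lem: simple socles}, pins down the multiplicities.
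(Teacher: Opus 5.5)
Your route is exactly the paper's: note that $\mf g\ot A_n\op\mcal Z$ acts by zero, so only the $H_n$-structure matters, then quote Rudakov, Shen and Duan--Shu--Yao. The paper's proof consists only of these citations and has the same defect as your plan. The genuine problem is the case $k=0$, where the statement is false, so no citation can supply it and your Step 3 cannot be completed.

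In Step 3 you only identify the socle $1\ot\C$ of $\mcal V_H(0,\mu_0,\mbf 0)=A_n$. You then implicitly keep the rest of your layer picture: an $L_H(0,\mu_1,\mbf 0)$ with a second trivial layer on top. But $A_n$ has length two.
\begin{itemize}
\item With trivial $V$ the action is $d_H(f)\cdot g=d_H(f)(g)$, and $d_H(t_i)=d_{m+i}$, $d_H(t_{m+i})=-d_i$ for $1\le i\le m$.
\item So a graded submodule containing a non-constant homogeneous $g$ contains a nonzero linear form.
\item It therefore contains all $t_l$, since the linear forms are the natural, irreducible $\mf{sp}_n$-module.
\item Then $d_H(t^{r+e_i})\cdot t_{m+i}=(r_i+1)t^r$ produces every monomial, including $1$.
\end{itemize}
Hence $0\subset\C\subset A_n$ is a composition series, with $A_n/\C\cong L_H(0,\mu_1,\mbf 0)$ in depth $1$. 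So $[\mcal V_H(0,\mu_0,\mbf 0):L_H(0,\mu_0,\mbf 0)]=1$, not $2$. The paper genuinely uses the value $2$ here: its recursion for $\mathrm{ch}\,L_H(0,\mu_1,\mbf 0)$ subtracts $2\,\mathrm{ch}\,L_H(0,\mu_0,\mbf 0)$.

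The same phenomenon breaks your Step 2 heuristic at $k=1$. The image of $d:\mcal V_H(0,\mu_0,\mbf 0)\to\mcal V_H(0,\mu_1,\mbf 0)$ is $dA_n\cong A_n/\C\cong L_H(0,\mu_1,\mbf 0)$, which has no trivial factor. So the $L_H(0,\mu_0,\mbf 0)$-factor of $\Omega^1=\mcal V_H(0,\mu_1,\mbf 0)$ does not come from $\mcal V_H(0,\mu_0,\mbf 0)$. It comes from the $H_n$-invariant closed form $\omega=\sum_i dt_i\wedge dt_{m+i}$:
\begin{itemize}
\item $\Omega^1/dA_n\cong d\Omega^1$ via $d$.
\item Projecting $d\Omega^1$ onto its $\omega\wedge A_n$-component in the Lefschetz decomposition gives a surjective $H_n$-map onto $A_n$ (the symplectic divergence).
\item Its kernel is the closed primitive $2$-forms, so $d\Omega^1$ contains $\C\omega\op\{\text{closed primitive }2\text{-forms}\}$ with quotient $A_n/\C$.
\item The trivial factor is the class of $\beta_0=\sum_i t_i\,dt_{m+i}$, with $d\beta_0=\omega$.
\end{itemize}
Any correct version must therefore exclude $k=0$ from the multiplicity-$2$ claim. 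There the factors are $L_H(0,\mu_0,\mbf 0)$ and $L_H(0,\mu_1,\mbf 0)$, each once. A proof must also account for the $L_H(0,\mu_{k-1},\mbf 0)$-factor at $k=1$ through $\omega$, not through the image of $\mcal V_H(0,\mu_{k-1},\mbf 0)$.
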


	\section{Category of $(A_n, \mf L(\mf g,X_n))$ modules}
	This section is devoted to prove that $\mcal V_X(\la,\mu,\mbf c)$ is reducible only when modules are Exceptional. For this we define a new category of modules, called $(A_n,\mf L(\mf g, X_n))$-mod and use some property for objects of this category to conclude our results. To define this category we follow \cite{Skr,YB,YB2}, (also see \cite{CSS} for Lie super algebra $W(m,n)$) where the author defined this kind of category of modules for the Lie algebra $X_n$ over arbitrary characteristic base field. 
\subsection{Category of $(A_n, \mf L(\mf g, X_n)$ modules):}	\begin{definition}\label{Def A, L(g) mod}
		Denote by $(A_n,\mf L(\mf g,W_n))$-mod the category whose objects are additive groups $M$ endowed with a $A_n$-module structure $(.)_{A_n}$, a $\mf L(\mf g,X_n)$-module structure $\rho_X$  and a $\mf L(\mf g,X_n)_{\geq 0}$-module structure $\sigma_X$ so that the following properties are satisfied:
		\begin{itemize}
			\item[(I)] (i) $[\rho_X(Y),f_{A_n}]=(Y(f))_{A_n}$, $\forall, f \in A_n$ and $ Y \in X_n.$\\
			(ii)  $[\rho_X(x),  f_{A_n}]=0$, $\forall \, x \in \mf g \ot A_n \op \mcal Z, f \in A_n.$ 
			
			\item[(II)] $[\sigma_X(D'),f_{A_n}]=0$, $\forall \,  D' \in \mf L(\mf g,X_n)_{\geq 0}, f \in A_n. $
			\item[(III)] $[\rho_X(d_i),\sigma_X(D')]= 0$, $ \forall \,  D' \in \mf L(\mf g,X_n)_{\geq 0}$ and $1 \leq i \leq n$.
			\item[(IV)] (i) 
		$	\begin{cases}
				\rho_W(fd_j)=f_{A_n} \circ \rho_W(d_j) +\dis\sum_{i=1 }^{n}(d_i f)_{A_n}\circ \sigma_W (t_i d_j) , \forall \, 1 \leq i \leq n, f\in A_n. \\
				\rho_S(d_{ij}(f))= (d_jf)_{A_n}\circ \rho_S(d_i) -(d_if)_{A_n}\circ \rho_S(d_j) + \dis{\sum_{|\al|=2}} (d^{\al}f)_{A_n} \circ \sigma_S(d_{ij}(t^{\al})), \cr \,	\hspace{9.5cm}  \forall \, \leq i < j \leq n, f\in A_n.\\
				\rho_H(d_{H}(f)) = \dis{\sum_{j=1}^{m}}(d_jf)_{A_n} \circ \rho_H(d_{m+j}) -\dis{\sum_{j=1}^{m}}(d_jf)_{A_n} \circ \rho_H(d_{j-m}) \cr
			\hspace{2.5cm}	+ \dis{\sum_{|\al|=2}} (d^{\al}f)_{A_n} \circ \sigma_H(d_{H}(t^{\al}))
			\end{cases}
			 $\\
			(ii)  $ \rho_X(x\ot f)=\sigma_X(x) \circ f_{A_n}, $ $\forall \, x \in \mf g, f \in A_n.$\\
			(iii)  $ \rho_X( fK_j)=\sigma_X(K_j) \circ f_{A_n}, $ $\forall \, 1\leq j \leq n, \, f \in A_n.$
		\end{itemize}
		
		We call this category as $(A_n, \mf L(\mf g,X_n))$ module category. This category will be denoted by $(A_n, \mf L(\mf g))$-mod, whose objects are usually called $(A_n,\mf L(\mf g,X_n))$-modules. For notational convenience some times we use $\sigma$ and $\rho$ instead to $\sigma_X$ and $\rho_X$ respectively, whenever $X_n$ is clear from the context.
	\end{definition}
	\begin{example}\label{Ex A,L mod}
		Let us consider the vector space $M=A_n \ot V$, for some ${\mf L(\mf g,X_n)_0}$ module $V$. Now define the $A_n$ modules structure on $M$ as polynomial multiplication, { $\rho_X$ module structure as subsections (2.3.1), (2.3.2) and (2.3.2)}. Further define the trivial module action of $\mf L(\mf g,X_n)_{\geq 1}$ on $V$ and consider it as a module for $\mf L(\mf g,X_n)_{\geq 0}$. Now define a $\sigma_X$ module structure on $M$ by:
		\begin{align}\label{sigma action}
			y. (f \ot v)=f \ot y \cd v , \,\,\forall \, y \in \mf L(\mf g,X_n)_{\geq 0}, v \in V, f \in A_n.
		\end{align}
		It is easy to see that under the above actions $M$ satisfied all the properties of $(A_n, \mf L(\mf g,X_n))$ modules. Hence $M$ is an object of $(A_n, \mf L(\mf g,X_n))$-mod. In particular, when $V=L^0_X(\la,\mu, \mbf c)$, then $M=\mcal V_X(\la,\mu,\mbf c)$.  \qed
	\end{example}
	We record a result from \cite[Proposition 3.4]{Skr}  in our context which we will use in this section. For more details on the following proposition one can see \cite[Section 3]{Skr}.
	\begin{proposition}\label{prop of f , g exists}
		Given any $\ga=(\ga_1,\cdots, \ga_n) \in \N^n,$ there exists $k \in \N$ and $f_1, \cdots, f_k;$ $g_1,\cdots, g_k$ in $A_n$ such that $\dis{\sum_{i=1}^{k}}f_id^\al g_i=  
		\begin{cases}
			1 , \,\,\text{if} \, \al=\ga\\
			0, \,\, \text{if} \, \al \neq \ga
		\end{cases} $
	\end{proposition}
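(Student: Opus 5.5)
We read $\sum_i f_i d^\alpha g_i$ as a product of operators on $A_n$: multiplication by $g_i$, then $d^\alpha$, then multiplication by $f_i$. We reduce the operator identity to identities between polynomials and then choose the $f_i,g_i$ explicitly. Applying the Leibniz rule $d^\alpha\circ g=\sum_{0\preceq\kappa\preceq\alpha}C^\alpha_\kappa\, d^\kappa(g)\, d^{\alpha-\kappa}$ (the same computation as in Lemma \ref{comp formula}) gives
$$\sum_{i=1}^k f_i d^\alpha g_i=\sum_{0\preceq\kappa\preceq\alpha}C^\alpha_\kappa\, c_\kappa\, d^{\alpha-\kappa},\qquad c_\kappa:=\sum_{i=1}^k f_i\, d^\kappa(g_i)\in A_n .$$
The operators $d^\beta$ are linearly independent over $A_n$. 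So everything reduces to controlling the polynomials $c_\kappa$, and the natural target is $c_\kappa=\delta_{\kappa,\gamma}$ for all $\kappa\in\N^n$.

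\textbf{Construction.} Index the pairs by $\beta\preceq\gamma$ and set $f_\beta=(-t)^{\gamma-\beta}/(\gamma-\beta)!$ and $g_\beta=t^\beta/\beta!$, so that $k=\prod_j(\gamma_j+1)$. Since $d^\kappa(t^\beta/\beta!)=t^{\beta-\kappa}/(\beta-\kappa)!$, with the paper's convention that $t^{\beta-\kappa}=0$ unless $\kappa\preceq\beta$, we get
$$c_\kappa=\frac{t^{\gamma-\kappa}}{(\gamma-\kappa)!}\sum_{\kappa\preceq\beta\preceq\gamma}(-1)^{|\gamma-\beta|}\binom{\gamma-\kappa}{\beta-\kappa}.$$
The sum factors coordinatewise as $\prod_j(1-1)^{\gamma_j-\kappa_j}$, which is $0$ unless $\kappa=\gamma$. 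If $\kappa\not\preceq\gamma$, every term vanishes. Hence $c_\kappa=\delta_{\kappa,\gamma}$ for all $\kappa$.

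\textbf{Consequence and main obstacle.} Substituting into the expansion gives
$$\sum_\beta f_\beta d^\alpha g_\beta=C^\alpha_\gamma\, d^{\alpha-\gamma}.$$
This is $1$ for $\alpha=\gamma$ and $0$ whenever $\gamma\not\preceq\alpha$, in particular for every $\alpha\neq\gamma$ with $|\alpha|\le|\gamma|$. This is the step I expect to be the obstacle: for $\alpha\succ\gamma$ strictly, the term $d^{\alpha-\gamma}$ cannot be removed by any choice of $f_i,g_i$.

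Indeed, $\alpha=\gamma$ forces $c_\gamma=1$. For $\alpha\succ\gamma$, independence of the $d^\beta$ then forces the coefficient $C^\alpha_\gamma c_\gamma$ of $d^{\alpha-\gamma}$ to vanish, a contradiction. For example, with $\gamma=0$ and $\alpha=e_1$ one gets $d_1\neq0$.

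So in characteristic zero the statement must be read with $\alpha$ restricted to $|\alpha|\le|\gamma|$, or equivalently $\alpha\not\succ\gamma$. This is how it is used: the operators are applied to finitely many homogeneous components of bounded degree, or one is in the divided-power setting of \cite{Skr}. In that range the construction above proves it. I would state this range explicitly, check the sign and factorial bookkeeping in the Leibniz formula, and leave the rest as above.
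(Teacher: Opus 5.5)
Your computation is correct, and it already proves the proposition in full: the identity $c_\kappa=\sum_\beta f_\beta\, d^\kappa(g_\beta)=\delta_{\kappa,\gamma}$ for \emph{all} $\kappa\in\N^n$ is exactly the statement. What is wrong is how you read the notation, and therefore your conclusion that the statement is false and must be restricted to $|\alpha|\le|\gamma|$.

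Here $f_i d^\alpha g_i$ means the polynomial $f_i\cdot d^\alpha(g_i)$, not the operator $f_i\circ d^\alpha\circ g_i$. The paper's own example for $\gamma=(1,1)$ shows this. It is claimed to work for every $\alpha$. That is true in the polynomial reading, but false in yours: at $\alpha=(2,1)$ your formula gives $2d_1\neq 0$. The applications point the same way. Lemma 4.5 and Theorem 4.7 use $\sum_r f_r g_r=0$, $\sum_r f_r\, d_i(g_r)=\delta_{iu}$ and $\sum_i f_i\, d_r d_{r'}(g_i)=\delta_{e_r+e_{r'},\gamma}$, which are identities in $A_n$. They are not operator identities on components of bounded degree.

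So drop the Leibniz expansion, the ``main obstacle'' paragraph and the proposed restriction. Your counterexample $\gamma=0$, $\alpha=e_1$ comes only from the misreading: with $f=g=1$ one has $1\cdot d^\alpha(1)=\delta_{\alpha,0}$ for every $\alpha$.

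As for the route, the paper gives no argument of its own. It cites Skryabin's Proposition 3.4 and works the example $\gamma=(1,1)$. Your family $f_\beta=(-t)^{\gamma-\beta}/(\gamma-\beta)!$, $g_\beta=t^\beta/\beta!$ ($\beta\preceq\gamma$) reduces for $\gamma=(1,1)$ to exactly the paper's four pairs $(1,t_1t_2)$, $(-t_2,t_1)$, $(-t_1,t_2)$, $(t_1t_2,1)$. It is the characteristic-zero counterpart of Skryabin's divided-power setting, with $t^\beta/\beta!$ playing the role of $t^{(\beta)}$. The coordinatewise cancellation $\prod_j(1-1)^{\gamma_j-\kappa_j}$, together with $c_\kappa=0$ for $\kappa\not\preceq\gamma$, is a complete proof of the statement as intended.
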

	\begin{example}
		We give an example to demonstrate Proposition \ref{prop of f , g exists}. Consider $n=2$ and $\ga=(1,1)$. Now set $f_1=1,g_1=t_1t_2$; $f_2=-t_2,g_2=t_1$; $f_3=-t_1,g_3=t_2$ and $f_4=t_1t_2, g_4=1$. It can be easily seen that this set of polynomials satisfy the property of Proposition \ref{prop of f , g exists}. 
	\end{example}
	\begin{lemma}\label{Lemma A, L(g) submod}
		Let $M$ be an  $(A_n, \mf L(\mf g,X_n))$-module such that $M$ is annihilated by $\sigma_X|_{\mf L(\mf g,X_n)_{\geq 1}}$. Then every  $A_n$ and $\rho_X$-submodule  $M'$ of $M$ is $(A_n, \mf L(\mf g,X_n))$-submodule.
	\end{lemma}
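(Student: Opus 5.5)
We must show that a subspace $M'$ stable under $A_n$ and $\rho_X$ is also stable under $\sigma_X(D')$ for every $D'\in \mf L(\mf g,X_n)_{\geq 0}$. Since $\sigma_X$ vanishes on $\mf L(\mf g,X_n)_{\geq 1}$, only $D'\in \mf L(\mf g,X_n)_0=\mf g\op (X_n)_0\op {\rm span}\{K_1,\dots,K_n\}$ needs treatment. The idea, following \cite[Section 3]{Skr}, is to express each $\sigma_X(D')$ as a combination of operators of the form $f_{A_n}\circ\rho_X(\cdot)\circ g_{A_n}$, using axiom (IV) together with Proposition \ref{prop of f , g exists}.

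The parts $\mf g$ and $K_j$ are immediate. By (IV)(ii) with $f=1$ we get $\rho_X(x\ot 1)=\sigma_X(x)$, and by (IV)(iii) we get $\rho_X(K_j)=\sigma_X(K_j)$. So these operators preserve $M'$ directly.

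For $(X_n)_0$, first take $X_n=W_n$ and the element $t_id_j$. By (IV)(i),
\[
\sigma_W(t_id_j)=\rho_W(t_id_j)-(t_i)_{A_n}\circ\rho_W(d_j),
\]
since $d_k(t_i)=\delta_{ki}$. Both terms on the right preserve $M'$, hence so does $\sigma_W(t_id_j)$.

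For $S_n$ and $H_n$ this direct trick fails, because a single element $d_{ij}(f)$ or $d_H(f)$ with $\deg f=2$ produces a combination $\sum_{|\al|=2}(d^\al f)\,\sigma(\cdot)$ mixing several $\sigma(d_{ij}(t^\al))$. I would nonetheless take $f=t^\ga$ with $|\ga|=2$. Then $d^\al f$ is a constant for $|\al|=2$, equal to $\ga!\,\delta_{\al\ga}$. Thus
\[
\rho_S(d_{ij}(t^\ga))-(d_jt^\ga)_{A_n}\rho_S(d_i)+(d_it^\ga)_{A_n}\rho_S(d_j)=\ga!\,\sigma_S(d_{ij}(t^\ga)),
\]
and the same holds for $H_n$. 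This already isolates each $\sigma_X(d_{ij}(t^\ga))$, respectively $\sigma_H(d_H(t^\ga))$. Since these span $(X_n)_0$, we obtain $\sigma_X((X_n)_0)M'\subseteq M'$.

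If this naive isolation fails because of some subtlety, say in the way the formulas in (IV)(i) are normalized, the fallback is to use Proposition \ref{prop of f , g exists}. We would form
\[
\sum_k (f_k)_{A_n}\circ\rho_X(d_{ij}(g_k\, t^{\ga}))\circ\cdots
\]
and exploit that, by (I) and (II), $\rho_X$ of a general element, conjugated by multiplication operators, expands into terms $(d^\al f)_{A_n}\sigma(\cdot)$, $(d^\al f)_{A_n}\rho(d_i)$. Choosing the $f_k,g_k$ so that $\sum f_k d^\al g_k=\delta_{\al\ga}$ then kills all but the desired $\sigma$-term. Everything used is an $A_n$- or $\rho_X$-operator, which preserves $M'$.

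The main obstacle is this bookkeeping in the $S_n$ and $H_n$ cases. One must check that the coefficients $d^\al f$ indeed separate the $\sigma_X(d_{ij}(t^\al))$ so that each basis element of $(X_n)_0$ is recovered. Once all of $\sigma_X(\mf L(\mf g,X_n)_{\geq0})$ preserves $M'$, $M'$ is an $(A_n,\mf L(\mf g,X_n))$-submodule.
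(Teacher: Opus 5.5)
Your proof is correct, but it reaches the key step more directly than the paper does.

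\textbf{The paper's route.} For $(X_n)_0$ the paper invokes Proposition~\ref{prop of f , g exists}. With $\gamma=e_u$ (resp.\ $\gamma=e_i+e_j$) it takes $f_r,g_r$ with $\sum_r f_r d^\alpha g_r=\delta_{\alpha\gamma}$. It then forms $\sum_r (f_r)_{A_n}\circ\rho_W(g_rd_j)$ (resp.\ $\sum_r (f_r)_{A_n}\circ\rho_S(d_{ij}(g_r))$ or $\sum_r (f_r)_{A_n}\circ\rho_H(d_H(g_r))$). The $\rho_X(d_l)$-terms cancel because $\sum_r f_r d^\alpha g_r=0$ for the relevant $\alpha\neq\gamma$, and only $\sigma_X(\cdot)$ of the desired element survives.

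\textbf{Your route.} You apply (IV)(i) to a single monomial, $t_i$ or $t^\gamma$ with $|\gamma|=2$. You then subtract the unwanted terms $(d_kf)_{A_n}\circ\rho_X(d_l)$ by hand. This is legitimate because $M'$ is stable under $A_n$ and under $\rho_X$ separately. What remains is $\gamma!\,\sigma_X(\cdot)$, a nonzero multiple under any normalization of (IV)(i), so the concern you raise does not arise. For $W_n$, choosing $f_1=1,\ g_1=t_u,\ f_2=-t_u,\ g_2=1$ in the paper's construction gives exactly your identity $\sigma_W(t_ud_j)=\rho_W(t_ud_j)-(t_u)_{A_n}\circ\rho_W(d_j)$. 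So your argument is the minimal instance of the same mechanism.

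\textbf{What each buys.} Yours needs no auxiliary proposition, and it makes clear that only monomials of degree at most $2$ enter. The paper's is uniform with Theorem~\ref{Thm for irr}. There $M'$ is only a $\rho_X$-submodule, so unwanted terms cannot be removed using $A_n$ and must cancel algebraically through $\sum_i f_i d^\alpha g_i=\delta_{\alpha\gamma}$.

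\textbf{Two small remarks.} First, the fallback paragraph can be dropped; its displayed operator is not the combination one would actually use, namely $\sum_r (f_r)_{A_n}\circ\rho_X(d_{ij}(g_r))$. Second, the \emph{main obstacle} you name is not an obstacle: with $f=t^\gamma$ the separation of the $\sigma$-terms is automatic. The only fact to record is that $\{d_{ij}(t^\gamma): i<j,\ |\gamma|=2\}$ spans $(S_n)_0$ and $\{d_H(t^\gamma): |\gamma|=2\}$ spans $(H_n)_0$. This holds because the given spanning sets of $S_n$ and $H_n$ consist of elements homogeneous of degree $|r|-2$.
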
	
	
	\begin{proof}
		We need to show that $M'$ is a $\sigma_X(\mf L(\mf g,X_n))$-submodule of $M$. Note that by the assumption we only need to show $M'$ is invariant under $\sigma_X|_{(\mf L (\mf g,X_n)_{0}}$. We prove this case by case.\\
		{\bf Case I:} Let $X_n=W_n$. Choose $\ga=e_u \in \N^n$, then there exists $f_1, \cdots, f_k$ and $g_1, \cdots g_k$ such that Proposition \ref{prop of f , g exists} holds for $e_u$. Now we have,
	\begin{align}\label{eqn W}
		\dis{\sum_{r=1}^{k}}f_r \rho_W( g_rd_j) &= \dis{\sum_{r=1}^{k}}f_r (g_{r} \circ \rho_W(d_j) +\dis\sum_{i=1 }^{n}(d_i g_r)\circ \sigma_W (t_i d_j)) \cr &=\sigma_W(t_ud_j),
	\end{align} 
		since $\dis{\sum_{i=1}^{k}}f_id^\al g_i=0$ for all $\al \neq e_u$. \\
		{\bf Case II:} Let $X_n=S_n.$ Choose, $\ga= e_i+e_j$, which satisfy the property of  Proposition \ref{prop of f , g exists}. Now we have,
		\begin{align}\label{eqn S}
		\dis{\sum_{r=1}^{k}}f_r \rho_S( d_{ij}(g_r)) &= \dis{\sum_{r=1}^{k}}f_r ((d_jg_r)_{A_n}\circ \rho_S(d_i) -(d_ig_r)_{A_n}\circ \rho_S(d_j) + \dis{\sum_{|\al|=2}} (d^{\al}g_r)_{A_n} \circ \sigma_S(d_{ij}(t^{\al}))) \cr
		&= \dis{\sum_{r=1}^{k}}\dis{\sum_{|\al|=2}} f_r (d^{\al}g_r)_{A_n} \circ \sigma_S(d_{ij}(t^{\al}))) \, \, (\text{By Proposition \ref{prop of f , g exists}} )\cr
		&= \begin{cases}
			\sigma_S(t_id_i-t_jd_j), \,\,\, \text{if }\gamma= e_i+e_j, \, 1 \leq i <j \leq n.\\
			\sigma_S(t_kd_i), \,\,\, \text{if }\gamma= e_j+e_k, \, k \neq i, i<j, 1 \leq i,j,k \leq n.
		\end{cases}
			\end{align}

{\bf Case III:} Let $X_n=H_n.$ Choose, $\ga= e_i+e_j$, which satisfy the property of  Proposition \ref{prop of f , g exists}. Now we have,		
	\begin{align}\label{eqn H}
		\dis{\sum_{r=1}^{k}}f_r \rho_H( d_{H}(g_r)) &=  \dis{\sum_{r=1}^{k}}f_r (\dis{\sum_{j=1}^{m}}(d_jg_r)_{A_n} \circ \rho_H(d_{m+j}) -\dis{\sum_{j=1}^{m}}(d_jg_r)_{A_n} \circ \rho_H(d_{j-m}) \cr
		&+ \dis{\sum_{|\al|=2}} (d^{\al}g_r)_{A_n} \circ \sigma_H(d_{H}(t^{\al}))) \, \, (\text{By Proposition \ref{prop of f , g exists}}) \cr
		&= \dis{\sum_{r=1}^{k}}\dis{\sum_{|\al|=2}}f_r (d^{\al}g_r)_{A_n} \circ \sigma_H(d_{H}(t^{\al}))) \cr
		&=\begin{cases}
			2\sigma_H(t_id_{m+i}), \,\,\, \text{if }\gamma= e_i+e_i, \, 1 \leq i \leq m.\\
			-2\sigma_H(t_id_{i-m
			}), \,\,\, \text{if }\gamma= e_i+e_i, \, m+1 \leq i \leq 2m.\\
		\sigma_H(t_jd_{m+i}-t_id_{j-m}
		), \,\,\, \text{if }\gamma= e_j+e_i, \, 1 \leq i \leq m; \, \\
		\hspace{5cm} m+1 \leq j \leq 2m.\\
			\sigma_H(t_jd_{m+i}+t_id_{j+m}
		), \,\,\, \text{if }\gamma= e_j+e_i, \, 1 \leq i<j \leq m.\\
			-\sigma_H(t_jd_{i-m}+t_id_{j-m}
		), \,\,\, \text{if }\gamma= e_j+e_i, \, m+1 \leq i<j \leq 2m.
		\end{cases}
	\end{align}	
		
From the equations  (\ref{eqn W}), (\ref{eqn S}) and (\ref{eqn H}) we get that $\sigma_X|_{(X_n)_0}$ leaves $M'$ invariant, since LHS of above equations leaves $M'$ invariant by the hypothesis. Again consider the following 
	$$ \rho_X(x\ot 1)=\sigma_X(x) \circ 1,  \, \forall \, x \in \mf g, \,\, \, \text{and}$$
	$$ \rho_X(K_j)=\sigma_X(K_j) \circ 1,  \, \forall \, 1\leq j \leq n, $$
	which implies that $\sigma_X|_{\mf g \op \mcal Z(\mf L(\mf g))}$ leaves $M'$ invariant. This completes the proof.\\	
		
	\end{proof}
\begin{remark}
	From the proof of Lemma $4.5$, one can easily observe that Definition $4.1(IV)$ actually gives the Shen-Larsson module structute on $A_n \ot V$ as $\mf L(\mf g,X_n)$ modules, for a $\mf L(\mf g,X_n)_0$ module $V$.
\end{remark}

	Before going to the main theorem of this section we record an auxiliary lemma regarding representation of ${\mf {gl}}_n$ on an irreducible module $V$.
	\begin{lemma}\label{Lemma auxiliary}
		Let $(V,\eta)$ be an irreducible module for ${\mf {gl}_n}$ which satisfied the property:\\
		\begin{itemize}
			\item[(1)] $\de_{rs}\eta(E_{s'r'}) +\de_{rs'}\eta(E_{sr'}) - \eta(E_{sr}) \eta(E_{s'r'}) - \eta(E_{s'r})\eta(E_{sr'}) =0, \, \text{if} \, s \neq s',$  $\forall \, 1 \leq r,s,r',s' \leq n$.
			\item[(2)] $ \de_{rs}\eta(E_{sr'}) - \eta(E_{sr}) \eta(E_{sr'}) =0$, $\forall \, 1 \leq r,s,r',s' \leq n$.
			 \end{itemize} 
	Then $V$ is an irreducible module with highest weight $\mu_k$ for some $0 \leq k \leq n$.
	\end{lemma}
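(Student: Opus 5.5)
The plan is to extract from the hypotheses two facts: the diagonal operators $\eta(E_{ss})$ are idempotent, and the weights of $V$ have all coordinates in $\{0,1\}$. From these, a highest weight argument will pin down the highest weight. I expect hypothesis (2) alone to suffice; (1) should only be needed as a consistency check.

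\emph{Step 1: idempotence.} In (2) I take $r=r'=s$. This gives $\eta(E_{ss})-\eta(E_{ss})^2=0$ for every $1\le s\le n$. So each $\eta(E_{ss})$ is annihilated by $x^2-x$, which has distinct roots, and is therefore diagonalizable with eigenvalues in $\{0,1\}$. The $\eta(E_{ss})$ pairwise commute, so they are simultaneously diagonalizable. Hence $V$ is a weight module for the Cartan subalgebra $\operatorname{span}\{E_{ss}\}$, and every weight $\nu=\sum_s \nu_s\epsilon_s$ occurring in $V$ has $\nu_s\in\{0,1\}$. In particular $V$ has only finitely many weights (at most $2^n$).

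\emph{Step 2: existence of a highest weight vector.} I choose a weight $\lambda$ of $V$ that is maximal in the usual dominance order among the finitely many weights of $V$. For $i<j$, the operator $\eta(E_{ij})$ raises weight by $\epsilon_i-\epsilon_j$, so it kills the weight space $V_\lambda$ by maximality. Therefore any nonzero $v\in V_\lambda$ is a highest weight vector. By irreducibility $V=U(\mf{gl}_n)v$, so $V$ is the irreducible highest weight module of highest weight $\lambda$, with $\lambda_s\in\{0,1\}$ for all $s$.

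\emph{Step 3: dominance.} It remains to show $\lambda_1\ge\lambda_2\ge\cdots\ge\lambda_n$. Then $\lambda$ is $\epsilon_1+\cdots+\epsilon_k=\mu_k$ for some $0\le k\le n$, with $\mu_0=0$. Suppose instead that $\lambda_i=0$ and $\lambda_{i+1}=1$ for some $i$. The vector $\eta(E_{i+1,i})v$ has weight $\lambda-\epsilon_i+\epsilon_{i+1}$, whose $i$-th coordinate is $-1$. By Step 1 this is not a weight of $V$, so $\eta(E_{i+1,i})v=0$. But then
$$0=\eta(E_{i,i+1})\eta(E_{i+1,i})v=\eta(E_{ii}-E_{i+1,i+1})v=(\lambda_i-\lambda_{i+1})v=-v,$$
a contradiction. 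Hence $\lambda$ is dominant and of the form $\mu_k$, as claimed.

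\emph{Main obstacle.} The main subtle point is Step 2, because no finite-dimensionality is assumed on $V$. The argument avoids needing it: it uses only that the weight set is finite, which Step 1 provides. If needed, I would also note that taking $r'=r\neq s$ in (2) gives $\eta(E_{sr})^2=0$. This is a further sign that the root vectors act nilpotently, but the proof as planned does not use it.
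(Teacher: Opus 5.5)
Your proof is correct, and it takes a different route from the paper, which gives no argument of its own. The paper only points to Skryabin's Lemma~4.3, a lowest-weight statement in Skryabin's more general setting, and asserts that the modified relations produce highest weight $\mu_k$ instead. You give a self-contained characteristic-zero argument that uses nothing but the diagonal instance $r=r'=s$ of (2):
\begin{itemize}
\item Idempotence of each $\eta(E_{ss})$ splits $V$ into joint eigenspaces with weights in $\{0,1\}^n$, with no finite-dimensionality assumption. An idempotent splits any vector space as kernel plus image, and commuting idempotents do so jointly.
\item Finiteness of the weight set then gives a highest weight vector $v$, so $V\cong L(\lambda)$.
\item The identity $\eta(E_{i,i+1})\eta(E_{i+1,i})v=(\lambda_i-\lambda_{i+1})v$, which uses $\eta(E_{i,i+1})v=0$ from Step~2, excludes $\lambda_i=0$, $\lambda_{i+1}=1$. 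This forces $\lambda=\mu_k$.
\end{itemize}
Your route makes explicit the adaptation from lowest to highest weight that the paper leaves to the reader. It also shows that hypothesis (1) and the off-diagonal cases of (2) are not needed for the conclusion as stated. Since the relations hold on all of $M$, they hold on each irreducible $\sigma$-summand, so your argument is exactly what the proof of Theorem~\ref{Thm for irr} requires. The paper's route is shorter and keeps the statement parallel to Skryabin's, where the full quadratic system is the natural input.
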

	\begin{proof}
		One can proceed similarly like [\cite{Skr}, Lemma 4.3]. Just note that relations are different in our case which leads the weights as highest weight $\mu_k$ (insteed of lowest weight of [\cite{Skr}, Lemma 4.3]).
	\end{proof}

	\begin{theorem}\label{Thm for irr}
		Let $M$ be a $(A_n, \mf L(\mf g,X_n))$-module such that  $M$ is annihilated by $\sigma_X|_{{\mf L(\mf g,X_n)}_{\geq 1}}$. Additionally,  assume that $M$ is completely reducible as $\sigma_X|_{{\mf L(\mf g,X_n)}_{ 0}}$-module and none of its irreducible components are isomorphic with $L^0_X(0, \mu_k,\mbf 0)$, $0 \leq k \leq n_X$. Then any $\rho_X$ submodule $M'$ of $M$ is $(A_n, \mf L(\mf g,X_n))$-submodule.
		
	\end{theorem}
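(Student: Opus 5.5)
The plan is to reduce the statement to $A_n$-stability and then to prove $A_n$-stability by Skryabin's argument \cite[Section 4]{Skr}, adapted to $\mf L(\mf g,X_n)$.

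\emph{Reduction.} By Lemma \ref{Lemma A, L(g) submod}, it suffices to show that a $\rho_X$-submodule $M'$ is stable under $f_{A_n}$ for every $f\in A_n$. Since $A_n$ is generated by $t_1,\dots,t_n$, it is enough to show $t_{u}M'\subseteq M'$ for each $u$.

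\emph{Producing operators.} Relation (IV) expresses each $\rho_X(Y)$ as a combination of $(A_n)$-multiplications composed with $\rho_X(d_i)$ and $\sigma_X|_{\mf L(\mf g,X_n)_0}$. The first-order combinations, such as $\rho(fgd_j)-f\rho(gd_j)-g\rho(fd_j)+fg\rho(d_j)$, vanish identically, so they give nothing. I will therefore pass to second order. For polynomials $f,g$ and basis elements $Y,Y'$, I form $\rho_X(fY)\rho_X(gY')$ and subtract the terms already expressible through lower-order $\rho$'s. Using (I)--(III), in particular $[\rho(d_i),\sigma]=0$ and $[\sigma,f_{A_n}]=0$, the remainder becomes
\[
\sum (d^\alpha f)(d^\beta g)\,\Theta_{\alpha\beta}\circ\rho(d_\cdot)\quad\text{or}\quad \sum (d^\alpha f)(d^\beta g)\,\Theta_{\alpha\beta},
\]
where $\Theta_{\alpha\beta}\in U(\mf L(\mf g,X_n)_0)$ is quadratic in $\sigma$. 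Applying Proposition \ref{prop of f , g exists} twice, with sums $\sum f_r(\cdot)g_r$ to kill all multi-indices but one, as in the proof of Lemma \ref{Lemma A, L(g) submod}, I will isolate operators of the form $t_u\circ\Theta$ built out of $\rho_X$-actions. Such operators preserve $M'$. For the $W_n$ case, $\Theta$ will be one of the expressions
\[
\de_{rs}\eta(E_{s'r'})+\de_{rs'}\eta(E_{sr'})-\eta(E_{sr})\eta(E_{s'r'})-\eta(E_{s'r})\eta(E_{sr'})
\]
from Lemma \ref{Lemma auxiliary}. For $S_n$ and $H_n$ I will use their images under $\sigma_X|_{(X_n)_0}$, with the relevant $\ga=e_i+e_j$ chosen as in \eqref{eqn S} and \eqref{eqn H}.

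\emph{The $\mf g$ and central parts.} Here (IV)(ii),(iii) give $\rho(x\ot t_u)=\sigma(x)t_u$ and $\rho(t_uK_j)=\sigma(K_j)t_u$, so $\sigma(x)t_uM'\subseteq M'$. Since $M$ is completely reducible and $\sigma(\mf g\op\mcal Z)$ commutes with $t_u$, I then decompose into isotypic components. On any component with $\la\neq0$ or $\mbf c\neq 0$, some central element $K_j$ acts by a nonzero scalar, or the span of the $\sigma(x)$ generates an algebra containing an element invertible there (for example a Casimir-type element of $\mf g$). This gives $t_u$ applied to the projection of $M'$ into $M'$. Since $\sigma(\mf L_0)=\rho(\mf L_0)$ preserves $M'$, the projections onto isotypic components are polynomials in $\rho$-operators, hence also preserve $M'$.

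\emph{The remaining case and the main obstacle.} What is left are the components with $\la=0$, $\mbf c=0$. On these I need the family of operators $\Theta$ to have no common kernel. By the hypothesis and the contrapositive of Lemma \ref{Lemma auxiliary} (and its $S_n,H_n$ analogues), on each such irreducible component $L^0_X(0,\mu,\mbf 0)$ with $\mu\neq\mu_k$ some $\Theta$ acts nonzero. By irreducibility and the commutation with $\rho(\mf L_0)$, the two-sided ideal these operators generate in $U((X_n)_0)$ then acts as the full endomorphism algebra. I can therefore write $\mathrm{id}$ on that component as $\sum a_i\Theta b_i$ with $a_i,b_i\in\sigma(U(\mf L_0))=\rho(U(\mf L_0))$, which yields $t_uM'\subseteq M'$.

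This last step, together with the exact bookkeeping of the second-order expansion (especially for $H_n$, whose relations differ), is the main obstacle. I would first carry it out carefully for $W_n$ following \cite[Lemma 4.3, Theorem 4.4]{Skr}, and then transport the argument to $S_n,H_n$ via the identities \eqref{eqn S} and \eqref{eqn H}.
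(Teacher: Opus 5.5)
You have a genuine gap: the step that turns $t_u\Theta$ being a $\rho$-operator into $t_uM'\subseteq M'$ rests on a false identity.

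The earlier parts are fine: the reduction to $t_uM'\subseteq M'$, the second-order identity (for $\ga=\epsilon_s+\epsilon_{s'}$ one gets $\sum_i\rho(ff_id_r)\rho(g_id_{r'})=f\,\Theta$ for all $f\in A_n$, so $t_u\Theta$ is a $\rho$-operator), and the use of $\rho(x\ot 1)=\sigma(x)$ for $x\in\mf g\op\mcal Z(\mf L(\mf g,X_n))$. The problem is that you use $\sigma(\mf L(\mf g,X_n)_0)=\rho(\mf L(\mf g,X_n)_0)$, and this is false on $(X_n)_0$. Definition \ref{Def A, L(g) mod}(IV)(i) with $f=t_i$ gives $\rho(t_id_j)=t_i\,\rho(d_j)+\sigma(t_id_j)$, so $\sigma(t_id_j)$ is not a $\rho$-operator. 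Stability of $M'$ under $\sigma((X_n)_0)$ is exactly what Lemma \ref{Lemma A, L(g) submod} deduces \emph{from} $A_n$-stability, so it is not yet available. Hence from $\mathrm{id}=\sum a_i\Theta b_i$ with $a_i,b_i\in\sigma(U((X_n)_0))$ you cannot conclude $b_im\in M'$ or $a_iM'\subseteq M'$. Likewise the projections onto $(X_n)_0$-isotypic components are not known to preserve $M'$. Two lesser issues: the Casimir trick for $\mf g$ fails on infinite-dimensional simple components, which the hypothesis allows and Theorem \ref{Thm irr thm} needs, since $\mf g$ may act nontrivially while its Casimir acts by $0$. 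And for $S_n,H_n$ the $\sigma\sigma$-terms carry $d^\al(ff_r)\,d^\beta(g_r)$ with $|\al|=|\beta|=2$, which is why the paper takes $|\ga|=4$ rather than $\ga=e_i+e_j$.

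The missing idea is to work with $\mcal I=\{\phi\in S:\ f\phi\in S'\ \text{for all } f\in A_n\}$ instead of $\sigma$-stability of $M'$. Here $S$ and $S'$ are the algebras generated by $\sigma(\mf L(\mf g,X_n)_{\geq 0})$ and $\rho(\mf L(\mf g,X_n))$. By (I)--(III), $\mcal I$ is a two-sided ideal of $S$. For $W_n$ this follows from $f\sigma(t_id_j)\phi=\rho(t_id_j)(f\phi)-(t_i\,d_j(f))\phi-(ft_i\phi)\rho(d_j)$ and $f\phi\,\sigma(t_id_j)=(f\phi)\rho(t_id_j)-(ft_i\phi)\rho(d_j)$; $S_n,H_n$ are analogous. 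Since $\Theta$ and $\sigma(\mf g\op\mcal Z(\mf L(\mf g,X_n)))$ lie in $\mcal I$, the two-sided ideal $J$ they generate satisfies $JN=N$ for every simple component $N$. Otherwise $N$ would satisfy the relations of Lemma \ref{Lemma auxiliary} (or its $S_n,H_n$ analogues) and be exceptional. As $Sm$ is semisimple, $m\in J(Sm)=Jm$, so $m=\phi m$ for some $\phi\in\mcal I$, and then $t_um=(t_u\phi)m\in M'$. This rescues your direct route.

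The paper sidesteps even this. It replaces $M'\subseteq M$ by $M'/M_1\subseteq M_2/M_1$, with $M_1$ the largest $A_n$-submodule in $M'$ and $M_2=A_nM'$, so that $M'$ contains no nonzero $A_n$-submodule and $A_nM'=M$. Then for $\phi\in\mcal I$ the $A_n$-submodule $A_n\phi(M')$ lies in $M'$, so $\phi(M')=0$. Since $\phi$ is $A_n$-linear by (II), $\phi(M)=A_n\phi(M')=0$. Thus $\sigma(\mf g\op\mcal Z(\mf L(\mf g,X_n)))$ and every $\Theta$ vanish on $M$, so all simple components are exceptional, forcing $M=0$. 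No projections, Casimirs or density arguments are needed.
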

	\begin{proof}
		We will follow arguments of \cite{Skr} to prove this theorem. In view of Lemma \ref{Lemma A, L(g) submod} it is sufficient to that any $\rho_X$-submodule $M'$ is $A_n$-submodule. Let $M_1=\{m \in M: A_n \cd m \subseteq M'\}$ be the largest $A_n$-submodule contained in $M'$ and $M_2=A_n \cd M'$ be the smallest $A_n$-submodule containing $M'$. It follows from Definition \ref{Def A, L(g) mod} (I) that $M_1,M_2$ are $\mf L(\mf g,X_n)$-submodules and hence $(A_n, \mf L(\mf g,X_n) )$-submodules, by Lemma \ref{Lemma A, L(g) submod}. Therefore we can construct a $\mf L(\mf g,X_n)$-submodule $M'/M_1$ contained inside $M_2/M_1$. Then our proof for pass from $M'$ to the $\mf L(\mf g,X_n)$ submodule $M'/M_1$ of the $(A_n, \mf L(\mf g,X_n) )$ module $M_2/M_1$. Thus we can assume that $M'$ contains no non-zero $A_n$-submodules of $M$ and $A_n .M'=M$. Then to complete the proof we need to show that $M=0$. Assume that $M \neq 0$. Let $S$ and $S'$ be two associative algebras generated by $\{\sigma_X(Y): Y \in {\mf L(\mf g,X_n)}_{\geq 0}\}$ and $\{ \rho_X(Y): Y \in \mf L(\mf g,X_n) \}$ respectively.  Now we look for endomorphisms $\phi$ of $M$ lying in $S$ with the property that for any $f \in A_n$ the endomorphism $f \phi \in S'.$ Then $\mf L(\mf g,X_n)$-submdoule $M'$ is stable under $f \phi$  for any $f \in A_n$, since $f \phi \in S'$ and $M'$ is $\rho$ submodule. Hence it contains the $A_n$-submodule $A_n \phi(M')$. Now by hypothesis $\phi(M')=0$. Again from Definition \ref{Def A, L(g) mod} (II), $\phi$ is an $A_n$-module homomorphism, since $\phi \in S$. Hence we have $$\phi(M)=\phi(A_n\cd M')=A_n \cd \phi(M')=0,$$ i.e $\phi =0$. This gives us certain relation between endomorphisms of $\sigma_X(Y), Y \in \mf L(\mf g,X_n)_{\geq 0}$.\\
		Now we compute the restrictions on the action of $\sigma_X(\mf L(\mf g,X_n)_{\geq 0})$, which turns out $\mf L(\mf g,X_n)_0$ modules to be exceptional modules. We complete the proof of this theorem with the following arguments.\\
			First note that, $\rho_X(x\ot f)=\sigma_X(x)\circ f_{A_n}=	f_{A_n}\sigma_X(x)$, for all $x \in \mf g, f \in A_n$, since $f_{A_n} $ and $\sigma_X(x)$ are commuting. Similarly, $\rho_X(fK_j)=\sigma_X(K_j)\circ f_{A_n}=	f_{A_n}\sigma_X(K_j),$ for all $f \in A_n , 1\leq j \leq n$. Therefore from the above discussion we have $\sigma_X|_{\mf g \op \mcal Z(\mf L(\mf g, X_n))}=0.$ Thus to complete the proof we need to show that $\sigma_X|_{(X_n)_0}$ is an irreducble module with highest weight $\mu_k$ for some $0 \leq k \leq n_X$. Then we have $\sigma_X|_{(X_n)_0}$ must have an irreducble compotent isomorphic with $L^0_X(0, \mu_k,\mbf 0)$ for some $\mu_k$ with $0 \leq k \leq n_X$, a contradiction. \\
	{\bf Case I:} Let $X_n =W_n$.	For any given $s, s' \in \{1,2,\cdots,n\}$, set $\ga=\epsilon_s+\epsilon_{s'}$. Then there exists $f_1, \cdots, f_k$ and $g_1, \cdots , g_k$ such that Proposition \ref{prop of f , g exists} holds. For any $f \in A_n$ we have,
		\begin{align}\label{key eqn 1}
			\dis{\sum_{i=1}^{k}}d_r(ff_i)g_i=  d_r(\dis{\sum_{i=1}^{k}}ff_ig_i)- \dis{\sum_{i=1}^{k}}ff_id_r(g_i) =0 , \, \forall \, 1 \leq r\leq n.
		\end{align}    
		
		Also we have
		\begin{align}\label{key eqn 2}
			\dis{\sum_{i=1}^{k}} d_{r}(ff_{i})d_{r'}(g_{i}) &= d_r(\dis{\sum_{i=1}^{k}} (ff_{i})d_{r'}(g_{i})) -\dis{\sum_{i=1}^{k}}ff_id_rd_{r'}(g_i) \cr
			&= \begin{cases}
				0 , \,\,\, \,\,\,\,\, \text{if,} \, \{r,r'\}\neq  \{s,s'\}\\
				-f , \,\,\, \text{if,} \, \{r,r'\}= \{s,s'\}.
			\end{cases}
		\end{align}
		Above two equations holds by Proposition \ref{prop of f , g exists}. Now consider the following action:	
		
		\begin{align*}
			\rho(ff_{i}d_{r})\rho(g_{i}d_{r'}) &= \left(ff_{i}\rho(d_{r})+\sum\limits^{n}_{j=1}d_{j}
			(ff_{i})\sigma(E_{jr}) \right) \left(   g_{i}\rho(d_{r'})+\sum\limits^{n}_{j'=1}d_{j'}
			(g_{i})\sigma(E_{j'r'})  \right) \cr
			&= ff_i\rho(d_{r})g_i\rho(d_{r'}) + \sum\limits^{n}_{j'=1}ff_i\rho(d_r) d_{j'} (g_{i})\sigma(E_{j'r'}) \cr
			& + \sum\limits^{n}_{j=1}d_{j}
			(ff_{i})\sigma(E_{jr})g_i\rho(d_{r'}) +\dis{\sum_{j=1}^{n} \sum_{j'=1}^{n}} d_{j}
			(ff_{i})d_{j'}
			(g_{i})\sigma(E_{jr})\sigma(E_{j'r'}) \cr
			&= ff_ig_i \rho(d_{r})\rho(d_{r'}) +ff_id_r(g_i)\rho(d_{r'}) + \sum\limits^{n}_{j'=1}ff_id_r d_{j'} (g_{i})\sigma(E_{j'r'}) \cr
			&+ \sum\limits^{n}_{j'=1}ff_i  d_{j'} (g_{i})\rho(d_r)\sigma(E_{j'r'}) + \sum\limits^{n}_{j=1}d_{j}
			(ff_{i})g_i\sigma(E_{jr})\rho(d_{r'}) \cr
			& +\dis{\sum_{j=1}^{n} \sum_{j'=1}^{n}} d_{j}
			(ff_{i})d_{j'}
			(g_{i})\sigma(E_{jr})\sigma(E_{j'r'}) 
		\end{align*}
		Now using equations (\ref{key eqn 1}), (\ref{key eqn 2}) and Proposition \ref{prop of f , g exists} we have 
		\begin{align}
			\dis{\sum_{r=1}^{k}} 	\rho(ff_{i}d_{r})\rho(g_{i}d_{r'}) &=
			\begin{cases}
				f\left( \de_{rs}\sigma(E_{s'r'}) +\de_{rs'}\sigma(E_{sr'}) - \sigma(E_{sr}) \sigma(E_{s'r'}) - \sigma(E_{s'r})\sigma(E_{sr'})   \right), \, \text{if} \, s \neq s'  \\
				f\left( \de_{rs}\sigma(E_{sr'}) - \sigma(E_{sr}) \sigma(E_{sr'})   \right), \, \text{if} \, s = s'.
			\end{cases} 
		\end{align} 
	 Hence from the above discussion it follows that $\sigma|_{(W_n)_0}$ satisfied the properties of Lemma \ref{Lemma auxiliary}. Hence $\sigma|_{(W_n)_0}$ is a highest weight module with highest weight $\mu_k$ for some $0 \leq k \leq n$.\\
		{\bf Case II:} Let $X_n=S_n$. 	For any given $s_1,s_2, s_1', s_2' \in \{1,2,\cdots,n\}$, set $\ga=\epsilon_{s_1}+\epsilon_{s_2}+\epsilon_{s_1'}+\epsilon_{s_2'}$. Then there exists $f_1, \cdots, f_k$ and $g_1, \cdots , g_k$ such that Proposition \ref{prop of f , g exists} holds. Now we consider the following action for $i,j,s,t \in \{1,2, \dots, n\}$ 
		$$ \dis{\sum_{r=1}^{k}} 	\rho(d_{ij}(ff_r))\rho(d_{st}(g_r))  $$
		and following the computation of \cite[Theorem 4.3(i)]{YB} together with \cite[Lemma 4.4]{YB}, we deduce that the $\sigma|_{(S_n)_0}$ has to be a highest weight represenation with highest weight $\mu_k$, for some $0 \leq k\leq n-1$.\\
			{\bf Case III:} Let $X_n=S_n$. 	For any given $s_1,s_2, s_1', s_2' \in \{1,2,\cdots,n\}$, set $\ga=\epsilon_{s_1}+\epsilon_{s_2}+\epsilon_{s_1'}+\epsilon_{s_2'}$. Then there exists $f_1, \cdots, f_k$ and $g_1, \cdots , g_k$ such that Proposition \ref{prop of f , g exists} holds. Now we consider the following action:
		$$ \dis{\sum_{r=1}^{k}} 	\rho(d_{H}(ff_r))\rho(d_{H}(g_r))  $$
		and using the computation of \cite[Theorem 3.4(i)]{YB2} together with \cite[Proposition 2.8]{YB2}, we conclude that the $\sigma|_{(H_n)_0}$ has to be a highest weight represenation with highest weight $\mu_k$, for some $0 \leq k\leq m$. 
	This completes the proof. 	
	\end{proof}

	{
		\begin{theorem}\label{Thm irr thm}
			Let us consider the $ \mf L(\mf g,X_n)$ module $A_n \ot V  $ for some irreducible $\mf L(\mf g,X_n)_0$ module $V$. Then  
			\begin{itemize}
				\item[(1)] $A_n \ot V  $ is irreducible if $V \not \cong L^0_X(0, \mu_k, \mbf 0)$, for some $0 \leq k \leq n_X$.
				\item[(2)] For $V \cong L^0_W(0, \mu_n, \mbf 0)$, $A_n \ot V$ is irreducible.
				\item[(3)]  For $V \cong L^0_X(0, \mu_k, \mbf 0)$, $0 \leq k \leq n_X$, $A_n \ot V$ is reducible, except $k=n$.
				
			\end{itemize}
			In particular, if ${\rm dim} V$ is infinite then $A_n \ot V$ is always irreducible $\mf L(\mf g, X_n)$ module. Further $\mcal V_X(\la,\mu,\mbf c)$ reducible if and only if $\mcal V_X(\la,\mu,\mbf c)$ is exceptional and $\mu\neq \mu_n$. 
	\end{theorem}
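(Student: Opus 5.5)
The plan is to realize $A_n\ot V$ as the $(A_n,\mf L(\mf g,X_n))$-module of Example \ref{Ex A,L mod} and then apply Theorem \ref{Thm for irr}.

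For (1), put $M=A_n\ot V$ with $A_n$ acting by multiplication, $\rho_X$ the Shen--Larsson action, and $\sigma_X(y)(f\ot v)=f\ot y\cdot v$. Here $\sigma_X|_{\mf L(\mf g,X_n)_{\geq1}}=0$ because $\mf L(\mf g,X_n)_{\geq 1}$ acts trivially on $V$. As a $\sigma_X|_{\mf L(\mf g,X_n)_0}$-module, $M\cong A_n\ot_\C V$ is a direct sum of copies of $V$, since $\sigma_X$ does not touch the polynomial factor; hence it is completely reducible with every component isomorphic to $V$. If $V\not\cong L^0_X(0,\mu_k,\mbf 0)$ for every $0\le k\le n_X$, Theorem \ref{Thm for irr} applies, so any nonzero $\rho_X$-submodule $M'$ is an $A_n$-submodule and a $\sigma_X$-submodule.

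By Lemma \ref{Lemma contain V }, $M'\supseteq 1\ot V$. For this I only need finiteness of $\mathrm{span}\{d^\alpha m\}$ for a fixed $m$. That holds because $\rho(d_i)$ acts as $d_i\ot 1$, which lowers polynomial degree, and so it works even when $\dim V=\infty$. Since $M'$ is an $A_n$-submodule, $M'\supseteq A_n(1\ot V)=M$, which proves irreducibility. The ``in particular'' claim then follows because every $L^0_X(0,\mu_k,\mbf 0)$ is finite-dimensional, so an infinite-dimensional $V$ is never exceptional.

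For (2) and (3), the exceptional cases, I would use the results of Section 3. For $X_n=W_n$, Proposition \ref{pro V(0,mu,0) red}(1) gives irreducibility exactly for $k=n$ and reducibility for $k<n$. Reducibility for $k<n$ can also be seen directly: the de Rham differential $\mf d_k$ is a nonzero module map whose kernel contains $1\ot\wedge^k\C^n$, or the constants when $k=0$, and is a proper nonzero submodule. For $S_n$ and $H_n$, Propositions \ref{pro V(0,mu,0) S} and \ref{pro V(0,mu,0) H} give at least two composition factors for every $0\le k\le n_X$, hence reducibility. For $H_n$ with $k=0$ or $k=m$, the constants $1\ot\C$, or the kernel or image of the corresponding de Rham-type map, give a proper submodule. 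In the statement the exception $k=n$ occurs only for $W_n$, since $n_X<n$ otherwise. The final sentence combines these with Remark 3.6, by which the isomorphism class of $V$ is all that matters, and with $V=L^0_X(\la,\mu,\mbf c)$.

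The main obstacle is the hypothesis check in Theorem \ref{Thm for irr}. Its proof uses only that $\sigma_X|_{\mf L_0}$ restricted to an irreducible component fails to be exceptional, so I must make sure it applies to $M$ itself. The danger is that a quotient $M_2/M_1$ there might have components different from those of $M$. This is where I would be careful: $M_2/M_1$ is a quotient of $A_n\ot V$ by an $A_n$- and $\sigma$-stable subspace, and such a subspace has the form $I\ot V$ for an ideal $I$ because $V$ is irreducible. So the quotient is again isotypic of type $V$, and the contradiction argument goes through. Identifying the $\sigma$-stable $A_n$-submodules as $I\ot V$ needs a density (Jacobson) argument when $\dim V=\infty$; I would verify that step in detail.
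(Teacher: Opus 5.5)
Your argument is correct and is essentially the paper's own: view $A_n\otimes V$ as the $(A_n,\mathfrak L(\mathfrak g,X_n))$-module with $\sigma$ acting only on the $V$ factor, apply the theorem on $(A_n,\mathfrak L(\mathfrak g,X_n))$-modules with non-exceptional $\sigma$-components to make every submodule $A_n$-stable, use that every nonzero submodule contains $1\otimes V$, and quote the propositions on exceptional modules for (2) and (3). Your worry that this hypothesis must survive the passage to $M_2/M_1$ is a legitimate point the paper passes over, but it needs neither the identification $M_1=I\otimes V$ nor a density argument: every submodule of a semisimple module $\bigoplus_i S_i$ has a complement of the form $\bigoplus_{j\in J} S_j$, so subquotients of a $V$-isotypic semisimple module are again $V$-isotypic.
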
 }
	\begin{proof}
Proof of (1),	by Example 3.2, we can consider $A_n \ot V$ as a module in $(A_n, \mf L(\mf g,X_n))$-mod. Now note that $A_n \ot V = \dis{\bgop_{r \in \N^n} t^r \ot V}$. It is clear from the hypothesis that as $\sigma_X(\mf L(\mf g,X_n)) $ module $A_n \ot V$ is completely reducible with none of its irreducible components are isomorphic with $L^0(0, \mu_k, \mbf 0)$, for some $0 \leq k \leq n_X$. Hence any $\mf L(\mf g,X_n)$ submodule $M'$ of $A_n \ot V$ is a $(A_n, \mf L(\mf g,X_n))$ submodule, by Theorem  \ref{Thm for irr}.\\
		Let $M'$ be a non-zero submodule of $A_n \ot V$ as $\mf L(\mf g,X_n)$ module. Then $M'$ contains $1 \ot V$, by Lemma \ref{Lemma contain V }.  Hence $t^r \ot V \subseteq M'$ for all $r \in \N^n$, since $M'$ is a $A_n$ submodule. Therefore $M'=A_n \ot V$. This completes the proof of (1). Note that (2), (3) follows from Proposition \ref{pro V(0,mu,0) red}, Proposition \ref{pro V(0,mu,0) S} and Proposition \ref{pro V(0,mu,0) H}.
	\end{proof}
	
	\section{Character formulas in category $\mcal O$}
	\subsection{Irreducible characters}
	Recall that the triangular decomposition of $\mf g$ and $(X_n)_0$ are given by  $\mf g=\mf n^- \op \mf h \op \mf n^+$ and $(X_n)_0=\mf n^-_X \op \mf h_X \op \mf n^+_X$ respectively. 
	Let $\Phi^+_X$ be the positive root system of $\mf g \op (X_n)_0$.  Moreover, denote by $\Phi^{\geq 1}_X$ be the root system of $\mf L(\mf g,X_n)_{\geq 1}$ relative to $\mf H =\mf h \op \mf h_X$, i.e, $\Phi^{\geq 1}_X= \{\al \in \mf H^*: (\mf L(\mf g,X_n)_{\geq 1})_\al \neq 0\}  $, where
	$$(\mf L(\mf g,X_n)_{\geq 1})_{\al}=\{x \in \mf L(\mf g,X_n)_{\geq 1}: [h,x]=\al(h) x, \, \forall \, h \in \mf H\}.$$
	Then we have $\mf L(\mf g,X_n)_{\geq 1}=\dis{\sum_{\al \in \Phi_X^{\geq 1} }} (\mf L(\mf g,X_n)_{\geq 1})_\al. $  For $\la \in \La^+_{\mf g, X, \C^n},$ we associate a subset of $\mf H^*,$ defined by $D(\lambda)=\{\la_1\in \mf H^* : \la_1 \succeq \lambda\}$, where $\la_1 \succeq \lambda$ means that $\la_1-\lambda$ lies in the $\Z_{\geq 0}$-span of $\Phi^{\geq 1}_X\cup \Phi^+_X$. Now we define an $\C$-algebra $\mcal B$ whose elements are series of the form $\dis\sum_{\lambda\in {\mf H}^*}c_\lambda e^\lambda$ with $c_\lambda\in \C$ and $c_\lambda=0$ for $\lambda$ outside the union of a finite number of sets of the form $D(\la)$.
	Consequently $\mcal B$ becomes a commutative associative algebra if we define $e^\la e^\mu = e^{\la+\mu}$ with $e^0$ as identity element. It is well known that formal exponents $\{e^\la : \la \in \mf H^*\}$ are linearly independent and in one to one correspondence with $\mf H^*$. Let $W$ be a semi-simple $\mf H$-module with finite dimensional weight spaces such that $W=\dis{\sum_{\la \in \mf H^*} }W_\la. $ Then we define ${\rm ch}(W)=\dis \sum_{\lambda\in {\mf H}^*}\dim W_\lambda e^\lambda$. In particular, if $V$ is an object of $\mathcal{O}$ for $\mf L(\mf g, X_n)$, then ${\rm ch}(V)\in \mathcal{B}$. The following statements are standard.

	\begin{lemma}\label{Lem Char} 
		\begin{itemize}
			\item[(1)] Let $V_1, V_2$ and $V_3$ be three $\mf L(\mf g,X_n)$-modules in the category $\mcal O$ such that there is an exact sequence of $\mf L(\mf g,X_n)$-modules $0\rightarrow V_1\rightarrow V_2\rightarrow V_3\rightarrow 0 $. Then ${\rm ch}(V_2)=\ch(V_1)+\ch(V_3)$.
			\item[(2)] Let $W=\dis\sum_{\lambda \in \mf H^*}W_\lambda$ be a semi-simple $\mf H$-module with finite-dimensional weight spaces, and $U=\dis\sum_{\lambda \in \mf H^*}U_\lambda$ be a finite-dimensional $\mf H$-module. If $\ch(W)=\dis\sum_{\la \in \mf H^*}c_{\lambda}e^\lambda$ falls in $\mathcal{B}$, then $\ch(W\otimes U)$ must fall in $\mathcal{B}$ and $\ch(W\otimes U)=\ch(W)\ch(U)$.
		\end{itemize}
	\end{lemma}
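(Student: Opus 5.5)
Both parts are bookkeeping with weight spaces, and I will prove them straight from the definition of $\ch$ and of the algebra $\mcal B$.

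For (1), the plan is as follows. All three modules lie in $\mcal O$, so they are $\mf H$-semisimple with finite-dimensional weight spaces; this holds because each graded piece $M_i$ is finite dimensional and $\mf H \subseteq \mf L(\mf g,X_n)_0$ preserves the grading. Module maps commute with $\mf H$, so they carry $(V_1)_\la$ into $(V_2)_\la$, and likewise for $V_2 \to V_3$. Hence the exact sequence restricts, for each $\la \in \mf H^*$, to an exact sequence of finite-dimensional vector spaces
\[
0 \rightarrow (V_1)_\la \rightarrow (V_2)_\la \rightarrow (V_3)_\la \rightarrow 0 .
\]
For injectivity and surjectivity on each weight space, I decompose an element into its weight components and use that the maps preserve weights. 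Taking dimensions gives $\dim (V_2)_\la = \dim (V_1)_\la + \dim (V_3)_\la$ for every $\la$. Summing over $\la$ yields the identity in $\mcal B$.

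For (2), I write $U = \bigoplus_{\nu} U_\nu$, a finite sum over a finite set $\Pi(U)$ of weights. Since $\mf H$ acts on the tensor product by $h \mapsto h \ot 1 + 1 \ot h$, the weight space decomposes as
\[
(W \ot U)_\la = \bigoplus_{\nu \in \Pi(U)} W_{\la-\nu} \ot U_\nu .
\]
This space is finite dimensional, since it is a finite sum of finite-dimensional pieces. Its dimension is $\sum_\nu c_{\la-\nu} \dim U_\nu$, which is exactly the coefficient of $e^\la$ in $\ch(W)\ch(U)$. Here the product in $\mcal B$ is well defined, because one factor is a finite sum.

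The only point needing care is membership in $\mcal B$. Suppose the support of $\ch(W)$ lies in $D(\la_1) \cup \cdots \cup D(\la_p)$. Then the support of $\ch(W\ot U)$ lies in $\bigcup_{i,\nu} \big(D(\la_i) + \nu\big) = \bigcup_{i,\nu} D(\la_i + \nu)$. The equality $D(\la_i)+\nu = D(\la_i+\nu)$ holds because $D(\cdot)$ is defined by a translation-invariant partial order. This is a finite union, so $\ch(W\ot U) \in \mcal B$, which completes the argument. Neither step poses a genuine obstacle; the closest thing to one is this support check, which reduces to the translation-invariance of $\succeq$.
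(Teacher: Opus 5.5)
\textbf{The gap.} Your justification in (1) that objects of $\mcal O$ have finite-dimensional weight spaces is invalid. From $\dim M_i<\infty$ and $\mf H\subseteq \mf L(\mf g,X_n)_0$ you only get that each $(M_i)_\la=M_i\cap M_\la$ is finite dimensional. The weight space $M_\la=\bigoplus_i (M_i)_\la$ can have infinitely many nonzero summands, and the definition of $\mcal O$ does not forbid this.

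Two counterexamples show the claim is actually false:
\begin{enumerate}
\item Take the direct sum of copies of the one-dimensional trivial module, one placed in each degree $j\geq 0$. It lies in $\mcal O_{\geq 0}$, since every graded piece is one-dimensional. But its weight-zero space is infinite dimensional.
\item For $X_n=S_n$, the costandard module $\mcal V_S(\la,\mu,\mbf c)=A_n\ot L^0_S(\la,\mu,\mbf c)$ already fails. Every $t_id_i-t_jd_j$ kills $(t_1\cdots t_n)^k$, and $\mf h$ acts only on the second factor. Hence the vectors $(t_1\cdots t_n)^k\ot v$, $k\geq 0$, all have the same $\mf H$-weight as $v$. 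For $H_n$ the same works with $(t_1t_{m+1})^k$.
\end{enumerate}
So this finiteness cannot be derived from the definition of $\mcal O$; it has to be a hypothesis. The paper makes the same assumption tacitly when it asserts ${\rm ch}(V)\in\mcal B$ for every $V\in\mcal O$, and it gives no proof of the lemma beyond calling it standard.

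\textbf{What is correct, and how to repair it.} Once finite-dimensionality of the weight spaces of $V_1,V_2,V_3$ is taken as a hypothesis (the statement needs it anyway for ${\rm ch}$ to be defined), the rest of your argument for (1) is correct.
\begin{enumerate}
\item The restriction of the sequence to each weight space is exact, so the dimension identity holds. It even holds as an identity of cardinals without any finiteness.
\item The same identity shows that ${\rm ch}(V_2)\in\mcal B$ if and only if ${\rm ch}(V_1)$ and ${\rm ch}(V_3)$ lie in $\mcal B$. You use this silently when you say the identity holds in $\mcal B$; it is worth stating.
\end{enumerate}
Alternatively, (1) holds for all exact sequences in $\mcal O$ if you use the graded character $\sum_{i,\la}\dim (M_i)_\la\, q^i e^\la$. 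There your reasoning does give finite coefficients, and the same proof applies because morphisms in $\mcal O$ preserve the grading.

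Your proof of (2) is fine. The one point to make explicit is that the support check needs $D(\la)$ for arbitrary $\la\in\mf H^*$. The paper's wording attaches $D(\la)$ only to dominant weights, and under that reading (2) already fails for a one-dimensional $U$ of non-integral weight.
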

	
	Now we study formal character for $\Delta_X(\la,\mu, \mbf c)$, for some  $(\la,\mu, \mbf c) \in \La^+_{\mf g,X, \C^n}$. Since $\Delta_X(\la,\mu,\mbf c)= U(\mf L(\mf g,X_n)_{\geq 1}) \ot_{\C} L^0_X(\la,\mu,\mbf c)$, so as a $U(\mf L(\mf g,X_n)_{\geq 1})$-module $\Delta_X(\la,\mu,\mbf c)$ is free module of rank dim $L^0_X(\la,\mu,\mbf c)$ and generated by $L^0_X(\la,\mu,\mbf c)$. Hence by Lemma \ref{Lem Char}, $$\ch(\Delta_X(\la,\mu,\mbf c))=\ch( U(\mf L(\mf g,X_n)_{\geq 1})) \ch(L^0_X(\la,\mu,\mbf c)).$$ Set
	\begin{align}
		\Upsilon_X=\prod\limits_{\alpha\in\Phi^{\geq 1}_X}
		(1-e^{\alpha})^{- dim({\mf L(\mf g,X_n)_{\geq 1}})_\alpha}.
	\end{align}
	Then we have $\ch(\Delta_X(\lambda,\mu,\mbf c))=\Upsilon_X\ch L^0_X(\lambda,\mu,\mbf c)$.
	
	\subsubsection{\bf Character formula for $L_X(\la,\mu,\mbf c)$} First note that there is no  natural action of $\mf L(\mf g,X_n)$ on $A_n$. But we can define an action of $\mf L(\mf g,X_n)$ on $A_n$ which makes it a module of Category $\mcal O$. In particular, define the action of $X_n$ on $A_n$ as natural action induced from the action of $W_n$ and action of $\mf g\ot A_n \op \mcal Z$ as trivial. It is easy to see that this action define a module structure of $\mf L(\mf g,X_n)$ on $A_n$. Further, under this action $A_n \in \mcal O$ and each of its weight spaces are one dimensional. In particular, $A_n=\dis{\sum_{r \in \N^n}} \C t^r$ and $\C t^r =(A_n)_{\xi_r} $, where $(A_n)_{\xi_r}$ is the weight space corresponding to the weight 
$$ \xi_r=	\begin{cases}
    \dis{\sum_{i=1}^{n}}r_i\epsilon_i, \,\, \text{for } X_n=W_n,\\
     \dis{\sum_{i=1}^{n-1}}(r_i-r_n)\epsilon_i, \,\, \text{for } X_n=S_n,\\
      \dis{\sum_{i=1}^{m}}(r_i-r_{m+i})\epsilon_i, \,\, \text{for } X_n=H_n.
	\end{cases}$$
Thus we have $\ch(A_n)=\dis{\sum_{r \in \N^n}}e^{\xi_r}$. Let us denote  $\Gamma_X= \ch(A_n)$ for $X_n=W_n, S_n$ or $H_n$. Then by Lemma \ref{Lem Char}, we have $\ch (\mcal V_X(\la,\mu,\mbf c))= \ch(A_n \ot L^0_X(\la,\mu,\mbf c))= \Gamma_X \ch(L^0_X(\la,\mu,\mbf c))$. Now we are ready to compute the character formulas for irreducible modules in category $\mcal O$.
	\begin{theorem}\label{thm: irr char}
		 Then we have the following character formulas for irreducible modules of category $\mcal O$.
		\begin{itemize}
			\item[(1)] \ch$(L_X(\la,\mu,\mbf c))	=\Gamma_X \ch(L^0_X(\la,\mu,\mbf c))$, for $(\la,\mu,\mbf c) \neq (0,\mu_k,0), $  $0 \leq k \leq n_X$ and $k \neq n$.
			\item[(2)] $\ch(L_W(0,\mu_{k},\mbf 0))=\dis{\sum_{i=0}^{k-1} } (-1)^{k-i+1}\Gamma_W \ch(L^0_W(0,\mu_{i},\mbf 0)) +(-1)^ke^0,  \,\,\, \forall \, 0 \leq k \leq n-1.$
			\item[(3)] $\ch(L_S(0,\mu_{k},\mbf 0))=\dis{\sum_{i=0}^{k-1} } (-1)^{k-i+1}\Gamma_S \ch(L^0_S(0,\mu_{i},\mbf 0)) +(-1)^ke^0,  \,\,\, \forall \, 0 \leq k \leq n-1.$
			\item[(3)] $\ch(L_H(0,\mu_{k},\mbf 0))=\dis{\sum_{i=0}^{k-1} } (-1)^{k-i+1}(k-i)\Gamma_H \ch(L^0_H(0,\mu_{i},\mbf 0)) +(-1)^{k}(k+1)e^0,  \,\,\, \forall \, 0 \leq k \leq m.$
		\end{itemize}
\end{theorem}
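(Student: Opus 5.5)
Part (1) follows from Theorem \ref{Thm irr thm}. For a non-exceptional triple $(\la,\mu,\mbf c)$, or for $\mu=\mu_n$ in type $W$, the module $\mcal V_X(\la,\mu,\mbf c)$ is irreducible. By Proposition \ref{lem: simple socles} its simple socle is $L_X(\la,\mu,\mbf c)$, so $L_X(\la,\mu,\mbf c)\cong\mcal V_X(\la,\mu,\mbf c)$. The formula $\ch\mcal V_X=\Gamma_X\ch L^0_X$, obtained from Lemma \ref{Lem Char}(2), then gives (1).

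For (2) and (3) I will argue by induction on $k$, using the two-step composition series of Propositions \ref{pro V(0,mu,0) red} and \ref{pro V(0,mu,0) S}. For $0\le k\le n-1$ (resp. $n-2$ in type $S$), additivity of characters (Lemma \ref{Lem Char}(1)) gives
$$\Gamma_X\ch L^0_X(0,\mu_k,\mbf 0)=\ch L_X(0,\mu_k,\mbf 0)+\ch L_X(0,\mu_{k+1},\mbf 0).$$
The base case is $\ch L_X(0,\mu_0,\mbf 0)=e^0$, since this module is trivial. Solving the recursion
$$\ch L_X(0,\mu_{k},\mbf 0)=\Gamma_X\ch L^0_X(0,\mu_{k-1},\mbf 0)-\ch L_X(0,\mu_{k-1},\mbf 0)$$
gives an alternating sum $\sum_{i=0}^{k-1}(-1)^{k-1-i}\Gamma_X\ch L^0_X(0,\mu_i,\mbf 0)+(-1)^ke^0$. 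I would check this against the displayed sign convention $(-1)^{k-i+1}$; the two agree since $(-1)^{k-1-i}=(-1)^{k-i+1}$. Type $S$ is identical, with the composition factors supplied by Proposition \ref{pro V(0,mu,0) S}.

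For type $H$, Proposition \ref{pro V(0,mu,0) H} gives the three-term relation
$$\Gamma_H\ch L^0_H(0,\mu_k,\mbf 0)=\ch L_H(0,\mu_{k-1},\mbf 0)+2\ch L_H(0,\mu_k,\mbf 0)+\ch L_H(0,\mu_{k+1},\mbf 0).$$
This expresses $\ch L_H(0,\mu_{k+1},\mbf 0)$ in terms of the two previous irreducible characters. I would use $k=0$ together with $L_H(0,\mu_{-1},\mbf 0)=0$ and $\ch L_H(0,\mu_0,\mbf 0)=e^0$ to start the recursion. The proposed closed form has coefficients $(-1)^{k-i+1}(k-i)$ and constant term $(-1)^k(k+1)e^0$, and I would verify it by strong induction. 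Write $a_k$ for the claimed expression and substitute $a_{k-1}$ and $a_k$ into $\Gamma_H\ch L^0_H(0,\mu_k,\mbf 0)-a_{k-1}-2a_k$. The coefficient of $\Gamma_H\ch L^0_H(0,\mu_i,\mbf 0)$ is then a second difference of the linear function $(k-i)$ with alternating signs. The $e^0$ coefficient is similarly $-(-1)^{k-1}k-2(-1)^k(k+1)=(-1)^{k+1}(k+2)$, which matches $a_{k+1}$.

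The main obstacle I expect is the bookkeeping of signs and the index ranges, especially in type $H$. The $i=k$ term appears with coefficient $1-2\cdot 0$ and must be matched to $(-1)^{1}\cdot 1$ up to the global sign convention. If the displayed signs turn out inconsistent with the recursion, I would take the recursion itself as the authoritative formula and record the corrected coefficients.
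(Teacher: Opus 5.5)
Your argument is essentially the paper's own proof. Part (1) comes from Theorem~\ref{Thm irr thm} and the simple socle of Proposition~\ref{lem: simple socles}. Types $W$ and $S$ come from the two-term recursion, and type $H$ from induction on the three-term relation of Proposition~\ref{pro V(0,mu,0) H}. Your algebra is right. However, the type $H$ part has a genuine gap, inherited from the paper, at the start of the recursion.

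You begin with the $k=0$ instance of Proposition~\ref{pro V(0,mu,0) H}, $\Gamma_H=2e^0+\mathrm{ch}\,L_H(0,\mu_1,\mbf 0)$, i.e.\ $[\mcal V_H(0,\mu_0,\mbf 0):L_H(0,\mu_0,\mbf 0)]=2$. This multiplicity is wrong. $\mcal V_H(0,\mu_0,\mbf 0)$ is just $A_n$ with the natural $H_n$-action ($\mf g\ot A_n$ and $\mcal Z$ act by $0$), and its only graded submodules are $0$, $\C$ and $A_n$:
\begin{itemize}
\item[(a)] From a nonconstant homogeneous element of a submodule, the operators $d_{m+i}=d_H(t_i)$ and $d_i=-d_H(t_{m+i})$ produce a nonzero linear form.
\item[(b)] $(H_n)_0\cong\mf{sp}_n$ acts irreducibly on linear forms, so every $t_j$ is obtained.
\item[(c)] $d_H(t^{s+e_1})(t_{m+1})=(s_1+1)t^s$ then gives every monomial.
\end{itemize}
So the trivial module occurs once, and the correct $k=0$ relation is $\Gamma_H=e^0+\mathrm{ch}\,L_H(0,\mu_1,\mbf 0)$. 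Indeed $A_n/\C$ is simple with lowest piece the linear forms, so by Proposition~\ref{lem1}(3) it is $L_H(0,\mu_1,\mbf 0)$ (of depth $1$), with character $\Gamma_H-e^0$. The displayed formula at $k=1$ instead gives $\Gamma_H-2e^0$. If the three-term relation holds for $1\le k\le m-1$, running it from the corrected start keeps the same $\Gamma_H$-coefficients. The constant term, however, becomes $(-1)^ke^0$ instead of $(-1)^k(k+1)e^0$. Your strong induction confirms the displayed closed form only because the wrong $k=0$ multiplicity is built into it.

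Two smaller points. First, in types $S$ and $H$ the torus $\mf H$ contains no degree operator, so $A_n$ has infinite-dimensional $\mf H$-weight spaces. For example, all $(t_1\cdots t_n)^a$ (type $S$), respectively all $(t_1t_{m+1})^a$ (type $H$), have weight $0$. Hence $\Gamma_S,\Gamma_H\notin\mcal B$, Lemma~\ref{Lem Char}(2) does not literally apply, and the subtractions in your recursions only make sense with graded characters that also record the $\Z$-degree. You would then also have to track the depth shift of the top composition factor of each $\mcal V_X(0,\mu_k,\mbf 0)$. Second, in your check of the type $H$ induction, the $i=k$ coefficient of $a_{k+1}$ is $(-1)^{(k+1)-k+1}\cdot 1=+1$, not $(-1)^1\cdot 1$. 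It therefore matches the coefficient $1$ from the recursion with no sign adjustment.
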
	

	\begin{proof}
		To prove (1) note that for $(\la,\mu,\mbf c) \neq (0, \mu_k,0)$ with $0 \leq k \leq n_X$ and $k \neq n$, we have $\mcal V_X(\la, \mu, \mbf c)$ is irreducible, by Theorem \ref{Thm irr thm}. Hence by Lemma \ref{description of socle}, we get $\mcal V_X(\la,\mu,\mbf c) \cong L_X(\la,\mu,\mbf c)$ and the result (1) follows. We know that
		\begin{align}\label{eqn for char for}
			\ch(\mcal V_X(\la,\mu,\mbf c))=\dis{\sum_{(\la',\mu',\mbf c')}}[\mcal
			V_X(\la,\mu,\mbf c): L_X(\la',\mu',\mbf {c'})]\ch(L_X(\la',\mu',\mbf {c'})).  
		\end{align} 
		Proof of (2), note that we get the following inductive formula with the help of Theorem \ref{Thm irr thm}, Proposition \ref{pro V(0,mu,0) red} and equation (\ref{eqn for char for}).
		\begin{align*}
			\ch(L_W(0,\mu_{k+1},\mbf 0))= \Gamma_W \ch(L^0_W(0,\mu_{k},\mbf 0)) - \ch(L_W(0,\mu_{k},\mbf 0)),
		\end{align*}
	 for all $ 0 \leq k \leq n-2$ and  $\ch(L_W(0,\mu_0,0))= e^0$. Now from the above inductive formula it follows that,
		$$\ch(L_W(0,\mu_{k},\mbf 0))=  \dis{\sum_{i=0}^{k-1}}(-1)^{k-i+1}\Gamma_W \ch(L^0_W(0,\mu_{i},\mbf 0)) +(-1)^{k}e^0 , \,\, 0 \leq k \leq n-1.  $$
This completes the proof of (2), and proof of (3) follows with a similar arguments  by Theorem \ref{Thm irr thm} and Proposition \ref{pro V(0,mu,0) S}.\\
We prove (4) by induction on $k$. First note that the result is true for $k=0$, since $L_H(0, \mu_0, \mbf 0)$ is trivial one dimensional module. Now assume that the result (4) holds for all  $0 \leq k \leq r$ for some $0 \leq r \leq m-1$. Now by Proposition \ref{pro V(0,mu,0) H} and equation (\ref{eqn for char for}), we have
\begin{align*}
	\ch(L_H(0,\mu_{r+1},\mbf 0)) &= \Gamma_H \ch(L^0_H(0,\mu_{r},\mbf 0)) - 2\ch(L_H(0,\mu_{r},\mbf 0)) -\ch(L_H(0,\mu_{r-1},\mbf 0)) \cr
	&= \Gamma_H \ch(L^0_H(0,\mu_{r},\mbf 0))-2 (\dis{\sum_{i=0}^{r-1} } (-1)^{r-i+1}(r-i)\Gamma_H \ch(L^0_H(0,\mu_{i},\mbf 0)) +(-1)^{r}(r+1)e^0) \cr
	&  -(\dis{\sum_{i=0}^{r-2} } (-1)^{r-i}(r-1-i)\Gamma_H \ch(L^0_H(0,\mu_{i},\mbf 0)) +(-1)^{r-1}re^0) \cr
	&= \Gamma_H \ch(L^0_H(0,\mu_{r},\mbf 0)) -2\Gamma_H \ch(L^0_H(0,\mu_{r-1},\mbf 0)) +(-1)^{r+1}(r+2)e^0 \cr 
	& + \dis{\sum_{i=0}^{r-2} } (-1)^{r-i}(r+1-i)\Gamma_H \ch(L^0_H(0,\mu_{i},\mbf 0) \cr
	&= \dis{\sum_{i=0}^{r} } (-1)^{r-i}(r+1-i)\Gamma_H \ch(L^0_H(0,\mu_{i},\mbf 0) +(-1)^{r+1}(r+2)e^0.
\end{align*} 
This proves that the induction hypothesis holds for $k=r+1$. Hence completes the proof.

	\end{proof}

	\subsection{Tilting modules and character formulas}\label{sec: tilt sec}
	In this section, we determine the character formulas for tilting modules in $\mathcal{O}$ by following the results of Soergel, \cite{Soe}. We need some preparation to use Soergel's result. We start with the following.
	\subsubsection{\bf Semi-infinite character}
	\begin{definition}
		Let $\mf L=\displaystyle{\sum_{i \in \Z}}\mf L_i$ be a $\Z$ graded Lie algebra with dim $\mf L_i < \infty$ for all $i \in \Z$. A character $\gamma: \mf L_0 \to \C$ is said to be a semi-infinite character for $\mf g$ if it satisfies the following properties.
		\begin{itemize}
			\item[(SI-1)] As a Lie algebra, $\mf L$ is generated by $\mf L_1, \mf L_0$ and $ \mf L_{-1}$.\\
			\item[(SI-2)] $\gamma([X,Y])=tr(\ad X\circ \ad Y \mid_{\mf L_0})$, for all $X \in \mf L_1$ and $Y \in \mf L_{-1}$.
		\end{itemize}
	\end{definition}	

	\begin{lemma}\label{semi-inf}
	\begin{itemize}
		\item[(1)] 	A linear map $\mcal E_W $ is a semi-infinite character on $\mf L(\mf g,W_n)$ if and only if $\mcal E_W(t_id_j)=\delta_{ij}$,  $\mcal E_W(x)=0$,  for all $1 \leq i,j \leq n$ and $x \in \mf g \op \mcal Z(\mf L(\mf g))$. Consequently, $\mcal E_W|_{\mf h_W}=\mu_n$. 
		\item[(2)] A linear map $\mcal E_X $ is a semi-infinite character on $\mf L(\mf g,X_n)$ if and only if $\mcal E_X=0$ for $X_n= S_n,H_n$.
	\end{itemize}
	\end{lemma}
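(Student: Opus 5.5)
The plan is to reduce the statement to a computation on the brackets $[\mf L(\mf g,X_n)_1,\mf L(\mf g,X_n)_{-1}]$. First I will check (SI-1) once. Since $\mf L_{-1}=\mathrm{span}\{d_i\}$ and $[d_i,\cdot]$ lowers polynomial degree, every element of $\mf L_k$ with $k\ge 2$ can be obtained from $\mf L_{k-1}$ using $\mf L_1$. The reason is that for $W_n$, $S_n$, $H_n$ and for $\mf g\ot A_n\op\mcal Z$, the positive part is generated by degree one: $\mf g\ot t^r$ is a bracket of $\mf g\ot t_i$ with $\mf g\ot t^{r-e_i}$ by simplicity of $\mf g$; $t^rK_j$ is a bracket $[t^{r-e_i+e_i}d_i\ldots]$-type, using $[t_it_jd_i, t^sK_j]$; and for $X_n$ this is the standard fact from \cite{DSY20}. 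So the substance is (SI-2).

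The main step is to show that the elements $[X,Y]$ with $X\in\mf L_1$ and $Y\in\mf L_{-1}$ span all of $\mf L_0$. Because $\gamma$ is linear, it is then determined by its values on these brackets through (SI-2); this gives both the uniqueness and the formula. I will list the relevant brackets. For $X=x\ot t_i$ we get $[x\ot t_i,d_j]=-\delta_{ij}x$. For $X=t_iK_l$ we get $[t_iK_l,d_j]=-\delta_{ij}K_l$. For $X=t_it_kd_l$ we get $[t_it_kd_l,d_j]=-(\delta_{ij}t_kd_l+\delta_{kj}t_id_l)$, and in type $S,H$ the analogous brackets of $d_{ij}(t^\al)$ or $d_H(t^\al)$ with $|\al|=3$ against $d_l$. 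These span $\mf g$, $\mathrm{span}\{K_l\}$ and $(X_n)_0$ respectively.

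Next I compute the traces $tr(\ad X\circ\ad Y|_{\mf L_0})$. For $X=x\ot t_i$ and $Y=d_j$, the map $\ad Y$ sends $\mf L_0$ into $\mf L_{-1}$, and the only nonzero part is $(X_n)_0\to\mf L_{-1}$. Then $\ad X$ sends $\mf L_{-1}$ into $\mf g\ot 1$, since $[x\ot t_i,d_k]=-\delta_{ik}x$. The composite maps $(X_n)_0$ into $\mf g$ and so has no diagonal part; its trace is $0$. This forces $\gamma(\mf g)=0$, and the same argument gives $\gamma(K_l)=0$. For $X=t_it_kd_l$ and $Y=d_j$ with $X_n=W_n$, the composite $\ad X\ad Y$ is nonzero only on $\mathrm{span}\{t_pd_q\}$. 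There $[d_j,t_pd_q]=\delta_{jp}d_q$ and $[t_it_kd_l,d_q]=-(\delta_{iq}t_kd_l+\delta_{kq}t_id_l)$, so the diagonal contribution is picked up from $t_pd_q=t_kd_l$ with $p=j$. Summing yields $tr=-(\delta_{ij}\delta_{kl}\cdot 1+\delta_{kj}\delta_{il})$, up to a sign convention. Comparing this with $\gamma(-(\delta_{ij}t_kd_l+\delta_{kj}t_id_l))$ gives $\gamma(t_kd_l)=\delta_{kl}$. On the Cartan part this gives $\mcal E_W(t_id_i)=1$, so $\mcal E_W|_{\mf h_W}=\mu_n$.

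Conversely, for (1) I must check that this $\gamma$ satisfies (SI-2) for all $X,Y$. That reduces to the above trace computation for basis elements, plus the observation that $\gamma$ vanishes on $\mf g$ and $\mathcal Z$. Along the way I must confirm that the $\mf g\ot A_n$, $\mcal Z$ parts of $\mf L_0$ contribute nothing to the trace. They do not, since $\ad d_j$ kills $\mf g\ot 1$ and $K_l$.

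For (2), $(X_n)_0\cong\mf{sl}_n$ or $\mf{sp}_n$ is semisimple, so $(X_n)_0=[(X_n)_0,(X_n)_0]$. Any character $\gamma$ of $\mf L_0$, being a Lie algebra homomorphism to $\C$, kills it. Combined with $\gamma(\mf g)=0=\gamma(K_l)$ from the step above, we get $\gamma=0$. It then remains to verify that the traces also vanish in types $S,H$ (consistency for existence). Here I expect the main obstacle: $tr(\ad X\ad Y|_{(X_n)_0})$ is a linear functional in $[X,Y]\in (X_n)_0$ that is $(X_n)_0$-invariant, by equivariance of the bracket. It therefore factors through the abelianization of $(X_n)_0$ and is zero. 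This invariance argument avoids explicit computation with the $d_{ij}(t^\al)$, $d_H(t^\al)$ formulas. One subtlety is that the trace need not be a function of $[X,Y]$ alone. I would handle this by checking that the trace on $\mf L_1\otimes\mf L_{-1}$ factors through the bracket, using the irreducibility from Lemma \ref{lem X_n acts irr}, and falling back on explicit computation if needed.
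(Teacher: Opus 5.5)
Your argument for (1), and for the ``only if'' half of (2), is sound. It is also more self-contained than the paper's proof, which imports the whole $X_n$-part (both directions) from the proof of \cite[Lemma 2.1]{DSY20}. The paper then adds only that $\mathfrak g\oplus\mathcal Z$ contributes nothing to the traces, together with explicit brackets for (SI-1).

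The genuine gap is the ``if'' half of (2). You never prove that $\mathcal E_X=0$ satisfies (SI-2), i.e.\ that $\mathrm{tr}(\ad X\circ\ad Y|_{\mathfrak L_0})=0$ for all $X\in(X_n)_1$, $Y\in\mathfrak L_{-1}$ when $X_n=S_n,H_n$. Semisimplicity of $(X_n)_0$ only identifies the unique candidate; it does not show the candidate works. Your invariance argument presupposes that the trace is a function of $[X,Y]$, which is exactly what (SI-2) asserts, so as written it is circular. You flag this yourself, but your proposed remedy does not work. Irreducibility of $\mathfrak L_{-1}$ (Lemma \ref{lem X_n acts irr}) says nothing about whether the trace form vanishes on the kernel of the bracket map $\mathfrak L_1\otimes\mathfrak L_{-1}\to\mathfrak L_0$.

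The missing step follows from your own computation in (1). Take $X\in(X_n)_1\subseteq(W_n)_1$ and $Y=d_j$, and set $T=\ad X\circ\ad Y$.
\begin{enumerate}
\item Since $\ad Y$ kills $\mathfrak g\otimes 1$ and the $K_l$, the trace over $\mathfrak L_0$ equals the trace of $T$ on $(X_n)_0$.
\item Viewed on all of $(W_n)_0=\mathfrak{gl}_n$, $T$ factors through $\mathfrak L_{-1}$, and $[(X_n)_1,\mathfrak L_{-1}]\subseteq(X_n)_0$. Hence $T(\mathfrak{gl}_n)\subseteq(X_n)_0$, and so $\mathrm{tr}(T|_{(X_n)_0})=\mathrm{tr}(T|_{\mathfrak{gl}_n})$.
\item By (1), $\mathrm{tr}(T|_{\mathfrak{gl}_n})=\mathcal E_W([X,Y])$, the matrix trace of $[X,Y]$.
\item This is $0$, because $(S_n)_0$ and $(H_n)_0$ consist of divergence-free linear fields and so lie in $\mathfrak{sl}_n$.
\end{enumerate}
Alternatively, here is a correct invariance argument. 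The trace form is $\mathfrak L_0$-invariant, so it gives an $(X_n)_0$-module map $(X_n)_1\to\mathfrak L_{-1}^*$. This map is zero because $(X_n)_1$ is irreducible and not isomorphic to $(\C^n)^*$.

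A smaller point concerns (SI-1). Your bracket $[t_it_jd_i,t^sK_j]$ uses an element that lies in neither $S_n$ nor $H_n$. In those types, use instead e.g.\ $[d_{ij}(t^{r+e_j}),t_iK_l]=(r_j+1)t^rK_l$ for $i\neq j$ and its Hamiltonian analogue, as the paper does.
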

	\begin{proof}
	Proof of both the statements goes parallelly and we write it for all $X_n$ together. It follows from the proof of \cite[Lemma 2.1]{DSY20} that $\mcal E_X|_{ ({X}_n)_0}$ is a semi-infinite character for $X_n$. In particular, we get that $X_n$ is generated by $(X_n)_{-1} , (X_n)_0 $ and $(X_n)_1$ with the required property stated in the Lemma, i.e $\mcal E_W(t_id_j)=\de_{ij}$ and $\mcal E_X|_{(X_n)_0}=0$ for $X_n=S_n,H_n$. Now note that $[(X_n)_0,  \mf g \op \mcal Z(\mf L(\mf g))]=[\mf L(\mf g,X_n)_{-1},  \mf g \op \mcal Z(\mf L(\mf g,X_n))]=0$. From this observation it will follow that $\mcal E_X(x)=0$ for all $x \in  \mf g \op \mcal Z(\mf L(\mf g))$, by the help of (SI-2) property.\\
Now	to complete the proof we need to show that $\mf L(\mf g, X_n)$ is generated by $\mf L(\mf g, X_n)_{-1}, \mf L(\mf g, X_n)_0$ and $\mf L(\mf g, X_n)_1$. Now the result follows from the following observation and the fact that $X_n$ is generated by $(X_n)_{-1} , (X_n)_0 $ and $(X_n)_1$.

$$  [t^rd_i,x \ot t_i]=x\ot t^r  \,\,\, \text{and} \,\,\, [t^rd_i, t_iK_j]=t^rK_j; \,\,   $$
$$[d_{ij}(t^{r+e_j}),x\ot t_i] =(r_j+1)x\ot t^r  \,\,\, \text{and} \,\,\, [d_{ij}(t^{r+e_j}), t_iK_j]=(r_j+1) t^rK_j;   $$
$$ [d_{H}(t^{r+e_j}),x\ot t_{m+j}] =(r_j+1)x\ot t^r \,\,\, \text{and} \,\,\, [d_{H}(t^{r+e_j}),t_{m+j}K_s]=(r_j+1) t^rK_s, $$
$\forall r \in \N^n, \, 1\leq j,s \leq n, \, x\in \mf g. $

	\end{proof}
	
	\subsubsection{\bf Tilting modules}  
\begin{definition}
	An object $M\in\mathcal{O}$ is said to admit a $\Delta$-flag if there exists an increasing filtration
	$$0=M_0\subset M_1\subset M_2\subset\cdots\cdots$$
	such that $M=\bigcup\limits_{i=0}^{\infty}M_i$ and $M_{i+1}/M_{i}\cong \Delta_X(\lambda_i)$ for all $i\geq 1$  where $\lambda_i\in\Lambda^+_{\mf g,X,\C^n}$ for all $i$.
\end{definition}

According to Soergel's construction (see \cite{Soe}), we get that associated with each $(\lambda,\mu,\mbf c)\in \Lambda^+_{\mf g, X,\C^n}$, there exists a uniquely determined indecomposable tilting module $T_X(\lambda,\mu,\mbf c)$  whose $\Delta$-flag starts with $\Delta_X(\lambda,\mu,\mbf c)$. The following result follows with similar lines of \cite[Theorem 5.2]{Soe}.

\begin{proposition}\label{tilting}
	For each $(\lambda,\mu,\mbf c)\in\Lambda^+_{\mf g,X, \C^n}$, up to isomorphism, there exists a unique indecomposable object $T_X(\lambda,\mu,\mbf c)\in\mathcal{O}$ such that
	\begin{itemize}
		\item[(1)] $Ext_{\mathcal{O}}^1(\Delta_X(\la',\mu',\mbf c'), T_X(\lambda,\mu,\mbf c))=0,\,\forall\,(\la',\mu',\mbf c')\in\Lambda^+_{\mf g,X, \C^n}$.
		\item[(2)] $T_X(\lambda,\mu,\mbf c)$ admits a $\Delta$-flag, starting with $\Delta_X(\lambda,\mu,\mbf c)$ at the bottom.
	\end{itemize}
\end{proposition}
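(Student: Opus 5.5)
We follow Soergel's construction in \cite[Theorem 5.2]{Soe}. By Lemma \ref{lem: shift functor} we may work in $\mathcal O_{\geq 0}$ and with the grading by depth. The grading satisfies $\mf L(\mf g,X_n)_{i}=0$ for $i<-1$ and $\dim \mf L(\mf g,X_n)_i<\infty$. By Lemma \ref{semi-inf}, $\mf L(\mf g,X_n)$ is generated by its components of degrees $-1,0,1$ and admits the semi-infinite character $\mcal E_X$. These are exactly Soergel's standing hypotheses, so his machinery applies.

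\textbf{Step 1: reduce to truncated subcategories.} For each $N$, let $\mathcal O^{\leq N}$ denote the Serre subcategory of modules whose grading and weights lie in a finite bounded region. In each such category there are only finitely many simples up to depth $N$, namely the $L_X(\la,\mu,\mbf c)$ of Proposition \ref{lem1}. The standard modules $\Delta_X$ are induced from the parabolic ${\rm P}_X$ or from $\mf L(\mf g,X_n)_{\geq 0}$, and they satisfy the usual Ext-vanishing:
$$\mathrm{Ext}^i_{\mathcal O}(\Delta_X(\la,\mu,\mbf c),\nabla_X(\la',\mu',\mbf c'))=0 \quad \text{for } i\ge1,$$
and $\mathrm{Hom}=\delta\,\C$. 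This follows by Frobenius reciprocity, because $\Delta_X$ is induced and $\nabla_X$ is coinduced from a finite-dimensional module on which the complementary part acts trivially. Proposition \ref{pro: Costandard mod rel} and Proposition \ref{description of socle} show that these $\nabla_X$ lie in $\mathcal O$. Together these facts give the structure of a highest-weight-type category with partial order given by depth, refined by $\succeq$.

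\textbf{Step 2: existence.} I would construct $T_X(\la,\mu,\mbf c)$ by the standard ``universal extension'' procedure. Start with $\Delta_X(\la,\mu,\mbf c)$ of depth $0$. Inductively on depth $d=1,2,\dots$, take the finite-dimensional space $E_d=\mathrm{Ext}^1(\Delta_X(\nu),M)$ for $\nu$ of depth $d$. It is finite-dimensional because graded components of $U(\mf L(\mf g,X_n)_{\geq1})$ are finite-dimensional. Form the universal extension $0\to M\to M'\to \bigoplus \Delta_X(\nu)\otimes E_d^*\to0$, which kills these Ext groups. Since $\Delta_X(\nu)$ of depth $d$ has no extensions with standards of smaller depth in the wrong direction, earlier vanishing is preserved. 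The union over $d$ gives a module with a $\Delta$-flag starting at $\Delta_X(\la,\mu,\mbf c)$. It lies in $\mathcal O$ because each graded piece stabilizes after finitely many steps. Condition (1) then holds. Finally take the indecomposable summand containing the bottom $\Delta_X(\la,\mu,\mbf c)$.

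\textbf{Step 3: uniqueness.} Given two such objects $T,T'$, use (1) and the $\Delta$-flag to lift the identity of $\Delta_X(\la,\mu,\mbf c)$ to maps $T\to T'$ and $T'\to T$. The composite restricts to the identity on the bottom, and the graded pieces are finite-dimensional. Hence a Fitting-lemma argument applied degree by degree (endomorphism rings are local) shows the composite is an isomorphism.

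\textbf{Main obstacle.} The main obstacle is Step 2 in the infinite setting. We must ensure that the $\Delta$-flag is countable and that the resulting module still has finite-dimensional graded pieces and is ${\rm P}_X$-locally finite. We also need $\mathrm{Ext}^1_{\mathcal O}(\Delta,\Delta')\neq0$ only when the depth strictly increases. I would handle this by checking that $\mathrm{Ext}^1(\Delta_X(\nu),\Delta_X(\nu'))=0$ unless ${\rm depth}(\nu)>{\rm depth}(\nu')$, using that $\Delta_X(\nu')$ is free over $U(\mf L(\mf g,X_n)_{\geq1})$ and that $\mf L(\mf g,X_n)_0$ is reductive, so its finite-dimensional modules are semisimple at each fixed degree. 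Here the semi-infinite character is where Soergel's duality (his Theorem 5.2, with $\mcal E_X$ twisting) enters, if one instead wants the tilting modules via semi-regular bimodules.
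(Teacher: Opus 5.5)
\textbf{Gap: the semisimplicity of degree-zero modules fails in $\mathcal O$ as defined.} Your route is the same as the paper's. The paper gives no argument beyond citing Soergel's Theorem~5.2, and your Ringel--Soergel construction (iterated universal extensions ordered by depth, then uniqueness by lifting the bottom standard and using locality of degree-preserving endomorphism rings) supplies the detail it omits. But the step everything rests on does not hold: ``$\mf L(\mf g,X_n)_0$ is reductive, so its finite-dimensional modules are semisimple at each fixed degree.''

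That inference requires the centre to act semisimply, and here it need not. We have $\mf L(\mf g,X_n)_0=\mf g\oplus(X_n)_0\oplus\mathrm{span}\{K_1,\dots,K_n\}$ with the $K_i$ central. The definition of $\mcal O$ only requires $\mf H=\mf h\oplus\mf h_X$ to act semisimply, and this $\mf H$ omits the $K_i$, although the paper calls it a Cartan subalgebra.

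For $\nu=(\la,\mu,\mbf c)$, let $E=L^0_X(\nu)\otimes\C^2$ with $K_1$ acting by $c_1\,\mathrm{id}+N$, where $N\neq 0$ is nilpotent. Then $U(\mf L(\mf g,X_n))\otimes_{U({\rm P}_X)}E$ lies in $\mcal O$ and is a non-split self-extension of $\Delta_X(\nu)$. Likewise $\mathrm{Ext}^1_{\mcal O}(\Delta_X(\nu),\nabla_X(\nu))\neq0$, so the vanishing you claim in Step~1 fails.

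More generally, let $M\in\mcal O_{\geq d}$ and let $\nu$ have depth $d$. Then $\mathrm{Ext}^1_{\mcal O}(\Delta_X(\nu),M)\cong\mathrm{Ext}^1_{\mf L(\mf g,X_n)_0}(L^0_X(\nu),M_d)$, where the right side is Ext among finite-dimensional $\mf H$-semisimple modules. One direction passes to degree-$d$ parts; the other induces an extension of $L^0_X(\nu)$ by $M_d$ and pushes out along $U(\mf L(\mf g,X_n))\otimes_{U({\rm P}_X)}M_d\to M$. The right side is nonzero whenever $L^0_X(\nu)$ is a composition factor of $M_d$. This is because a nonzero finite-dimensional module over the localization of $\C[K_1,\dots,K_n]$ at $\mbf c$ is never injective among finite-dimensional modules.

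Apply this with $d$ the smallest depth occurring in a $\Delta$-flag of a candidate $T$; such a $d$ exists since $\dim T_j<\infty$. It follows that no object of $\mcal O$, as literally defined, satisfies (1) and (2). Concretely, the same-depth extensions you implicitly assume vanish do not. Killing them by universal extensions never terminates, and the union acquires an infinite-dimensional degree-$0$ piece.

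\textbf{Repair, and two smaller points.} Require the $K_i$ to act semisimply: take $\mf H\oplus\mathrm{span}\{K_1,\dots,K_n\}$, a genuine Cartan subalgebra of $\mf L(\mf g,X_n)_0$, or fix the central character $\mbf c$. The paper tacitly needs this too. Its tilting character formula gives $\mathrm{ch}\,T_X(\la,\mu,\mbf c)=\mathrm{ch}\,\Delta_X(\la,\mu,\mbf c)$, i.e.\ $T_X=\Delta_X$, for non-exceptional weights, which the self-extension above contradicts.

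Under that hypothesis, Weyl's theorem makes finite-dimensional graded $\mf L(\mf g,X_n)_0$-modules in $\mcal O$ semisimple; for $W_n$ the central element $\sum_i t_id_i$ of $\mathfrak{gl}_n$ already lies in $\mf h_X$. Then $\mathrm{Ext}^1$ between standards of equal depth vanishes, and standards of smaller depth than $M$ have no extensions with $M$ for degree reasons. Your construction and uniqueness argument then go through as described.

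Two smaller points:
\begin{enumerate}
\item In Step~1, the claim that a truncation has only finitely many simples is false, since $\mbf c$ ranges over $\C^n$. You never use it.
\item After passing to ``the indecomposable summand containing the bottom $\Delta_X(\la,\mu,\mbf c)$'', you still need to show that this summand carries a $\Delta$-flag starting with $\Delta_X(\la,\mu,\mbf c)$. Alternatively, prove that the iterated universal extension is already indecomposable.
\end{enumerate}
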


We call the indecomposable module $T_X(\lambda,\mu,\mbf c)$ as the tilting module corresponding to $(\lambda,\mu,\mbf c)\in\Lambda^+_{\mf g,X, \C^n}$.
Now we get the following formula by Theorem \cite[Theorem 5.12]{Soe} together with the help of Lemma \ref{semi-inf}.
\begin{proposition} \label{soegrel formula}
	Let $(\lambda, \mu,\mbf c), (\lambda', \mu',\mbf c') \in\Lambda^+_{\mf g,X, \C^n}.$ 
	Then we have
	$$[T_X(\lambda,\mu,\mbf c):\Delta_X(\la',\mu',\mbf c')]=[\mathcal{V}_X(-\omega_{\mf g} \la',-\omega_{X}\mu'-\mcal E_X|_{\mf h_X}, -\mbf {c'}):
	L_X(-\omega_{\mf g} \la,-\omega_{X}\mu-\mcal E_X|_{\mf h_X}, -\mbf {c})],$$
	where $\omega_{\mf g}$ and $\omega_{X}$  are the longest elements in the Weyl group of $\mf g$ and $(X_n)_0$ respectively.
	
\end{proposition}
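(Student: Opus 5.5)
We would deduce the formula from Soergel's reciprocity \cite[Theorem 5.12]{Soe}, which says, for a $\Z$-graded Lie algebra $\mf L$ with $\dim \mf L_i<\infty$ and a semi-infinite character $\gamma$, that
$$[T(\la):\Delta(\la')]=[\nabla(-\omega\la'-\gamma): L(-\omega\la-\gamma)],$$
with $\omega$ the longest Weyl element of $\mf L_0$. The work is to check Soergel's hypotheses for $\mf L(\mf g,X_n)$ with the triangular structure of Section 3, and then translate $\nabla$, $\gamma$ and the duality on weights into the present notation.

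\textbf{Step 1 (hypotheses).} The grading has $\mf L(\mf g,X_n)_i=0$ for $i<-1$, the graded pieces are finite dimensional, and $\mf L(\mf g,X_n)_0=\mf g\op (X_n)_0\op{\rm span}\{K_j\}$ is reductive with Cartan subalgebra $\mf H$ (plus the centre). Lemma \ref{semi-inf} supplies a semi-infinite character $\mcal E_X$, which includes the generation condition (SI-1). Soergel's category of graded modules, locally finite over the nonnegative part, corresponds to our $\mcal O$ after the usual reversal of the grading (or passage to the opposite algebra via $\mf L_{i}\leftrightarrow\mf L_{-i}$). Since our $\mcal O$ uses ${\rm P}_X=\mf L_{-1}\op\mf L_0$-local finiteness and induces from ${\rm P}_X$, our $\Delta_X$ is Soergel's standard object. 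Proposition \ref{tilting} identifies $T_X$ with his tilting object.

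\textbf{Step 2 (translating the weights).} Soergel's duality sends the $\mf L_0$-simple of highest weight $\nu$ to the dual of the simple twisted by $-\gamma$, whose highest weight is $-\omega\nu-\gamma$. Here $\mf L_0$ is a direct sum, so $\omega=(\omega_{\mf g},\omega_X)$ acts componentwise. By Lemma \ref{semi-inf}, $\mcal E_X$ vanishes on $\mf g$ and on the centre, and on $(X_n)_0$ it restricts to $\mcal E_X|_{\mf h_X}$ on $\mf h_X$. The central characters $\mbf c$ therefore transform to $-\mbf c$, since the dual of a module on which $K_i$ acts by $c_i$ has $K_i$ acting by $-c_i$. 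This yields the triples $(-\omega_{\mf g}\la',-\omega_X\mu'-\mcal E_X|_{\mf h_X},-\mbf c')$ and $(-\omega_{\mf g}\la,-\omega_X\mu-\mcal E_X|_{\mf h_X},-\mbf c)$. These lie in $\La^+_{\mf g,X,\C^n}$ because $-\omega$ preserves dominant integral weights and $\mcal E_X|_{\mf h_X}$ is $0$ or $\mu_n$, which is a character of $\mf{gl}_n$.

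\textbf{Step 3 (realizing the costandard module).} Replace $\nabla_X$ by $\mcal V_X$ using Proposition \ref{pro: Costandard mod rel}. Note that Soergel's $\nabla$ is the coinduced module ${\rm Hom}_{U(\mf L_{\ge0})}(U(\mf L),-)$, which matches our definition. Composition multiplicities are well defined because each graded piece is finite dimensional and Proposition \ref{lem1} classifies the simples, so the two sides of the displayed equality agree.

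The main obstacle is Step 1: checking that Soergel's standing assumptions carry over. Two points need care. First, Soergel works with a Cartan-type setup in which $\mf L_0$ acts semisimply through a fixed $\mf H$, whereas here the centre $\mcal Z(\mf L(\mf g,X_n))$ acts by arbitrary scalars; I would handle this by working in a fixed block of central character. Second, the sign and grading conventions must match exactly, especially the placement of $\mf L_{-1}$ on the finite side. I would first verify that his proof of \cite[Theorem 5.12]{Soe} uses only the existence of $\gamma$ and of the semiregular bimodule $S_\gamma=U(\mf L)\ot_{U(\mf L_{\ge0})}\C_\gamma$, and that it goes through verbatim.
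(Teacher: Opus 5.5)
Your route is the paper's route. You apply \cite[Theorem 5.12]{Soe} with the semi-infinite character of Lemma~\ref{semi-inf}, dualize $L^0_X(\la,\mu,\mbf c)$ to get highest weight $(-\omega_{\mf g}\la,-\omega_X\mu)$ with $K_i$ acting by $-c_i$, and replace $\nabla_X$ by $\mcal V_X$. The trouble is at the one point where you go beyond the paper's one-line citation: matching the conventions. Write $\mf L_i=\mf L(\mf g,X_n)_i$.

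In Step~1 you reverse the grading, $\mf L'_i:=\mf L_{-i}$, so that $\Delta_X$ (induced from ${\rm P}_X=\mf L_{\le 0}$) becomes Soergel's standard object. In Step~2, however, you still use $\gamma=\mcal E_X$, which is the semi-infinite character of the \emph{unreversed} grading.

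For $\mf L'$, condition (SI-2) reads $\gamma'([Y,X])=\mathrm{tr}(\ad Y\circ\ad X|_{\mf L_0})$ for $X\in\mf L_1$, $Y\in\mf L_{-1}$. Since $\ad X\circ\ad Y-\ad Y\circ\ad X=\ad[X,Y]$ is traceless on the reductive algebra $\mf L_0$, this gives $\gamma'([Y,X])=\mcal E_X([X,Y])$, i.e.\ $\gamma'=-\mcal E_X$. Your own rule $\nu\mapsto-\omega\nu-\gamma'$ then yields
\[
[T_X(\la,\mu,\mbf c):\Delta_X(\la',\mu',\mbf c')]=[\mcal V_X(-\omega_{\mf g}\la',-\omega_X\mu'+\mcal E_X|_{\mf h_X},-\mbf c'):L_X(-\omega_{\mf g}\la,-\omega_X\mu+\mcal E_X|_{\mf h_X},-\mbf c)].
\]
This agrees with the statement for $S_n,H_n$, where $\mcal E_X=0$. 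It disagrees for $W_n$, where $\mcal E_W|_{\mf h_W}=\mu_n$.

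This is not a harmless convention choice: for $X_n=W_n$ the displayed formula is false, so no argument can close the gap. Here is a counterexample.

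\begin{itemize}
\item Let $v$ span the trivial ${\rm P}_W$-module. Put $F=\C v\op\mf L_{-1}^*$, with $\mf L_{-1}^*\cong L^0_W(0,\mu_1,\mbf 0)$ placed in degree $1$, $\mf L_0$ acting contragrediently, and $\mf L_{-1}$ acting by $y\cdot\phi=\phi(y)v$. This is a ${\rm P}_W$-module.
\item Inducing gives an extension $0\to\Delta_W(0,0,\mbf 0)\to U(\mf L)\ot_{U({\rm P}_W)}F\to{}^1\Delta_W(0,\mu_1,\mbf 0)\to 0$.
\item The extension does not split. A lift of $\phi$ to degree $1$ has the form $\phi+xv$ with $x\in\mf L_1$. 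Since $[y,x]\in\mf L_0$ kills $v$, we get $y\cdot(\phi+xv)=\phi(y)v\neq 0$ for suitable $y$. Hence $\Delta_W(0,0,\mbf 0)$ is not tilting.
\item The statement, however, gives $[T_W(0,0,\mbf 0):\Delta_W(\la',\mu',\mbf c')]=[\mcal V_W(-\omega_{\mf g}\la',-\omega_W\mu'-\mu_n,-\mbf c'):L_W(0,-\mu_n,\mbf 0)]$. Since $(0,-\mu_n,\mbf 0)$ is not exceptional, Theorem~\ref{Thm irr thm} and Proposition~\ref{pro V(0,mu,0) red} show this vanishes unless $(\la',\mu',\mbf c')=(0,0,\mbf 0)$. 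That would force $T_W(0,0,\mbf 0)=\Delta_W(0,0,\mbf 0)$, a contradiction.
\item With $+\mcal E_W$ one instead gets $[T_W(0,0,\mbf 0):\Delta_W(0,\mu_1,\mbf 0)]=[\mcal V_W(0,\mu_{n-1},\mbf 0):L_W(0,\mu_n,\mbf 0)]=1$, which matches the extension.
\end{itemize}

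The paper's proof is the same bare citation of \cite{Soe} and skips this check too. Once $\gamma$ is corrected, what your argument actually proves is the $+\mcal E_X|_{\mf h_X}$ version. For $W_n$ the exceptional tilting modules are then $T_W(0,\mu_j,\mbf 0)$, $0\le j\le n-1$, with $\Delta$-flag $\Delta_W(0,\mu_j,\mbf 0)$, $\Delta_W(0,\mu_{j+1},\mbf 0)$.
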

\begin{proof}
	The proof of this proposition follows from the same lines with \cite[Theorem 5.12]{Soe} and Remark 2.2 of \cite{Soe}, together with the observation that highest weight of the irreducible module $L^0(\la,\mu,\mbf c)^*$ is $(-\omega_{\mf g}\la, -\omega_X
	 \mu)$ and $K_i$ acts on $  L^0_X(\la,\mu,\mbf c)^*$ by $-c_i$ for all $1 \leq i \leq n$. 
\end{proof}
\begin{remark}\label{weyl act}
	It should be noted that
	\begin{equation*}
		\omega_X(\epsilon_i)= \begin{cases}
		\epsilon_{n+1-i}, \,\,\, \, \text{for } X_n=W_n \text{ or } S_n\\
		-\epsilon_i, \,\,\, \, \text{for } X_n=H_n,
		\end{cases}
	\end{equation*}
for all	$ 1 \leq i \leq n. \,$
\end{remark}

Now we will determine the multiplicity of the standard module $\Delta_X(\la',\mu,\mbf c')$ occurring in the $\Delta$-filtration of the tilting module $T_X(\lambda,\mu,\mbf c)$ for any $(\lambda,\mu,\mbf c),(\lambda',\mu',\mbf c'), \in\Lambda^+_{\mf g,X} \times \C^n$.

\begin{proposition}\label{prop mult Delta(lam)}
	Let $(\lambda',\mu',\mbf c'),$ $(\lambda,\mu,\mbf c)\in\Lambda^+_{\mf g,W} \times \C^n$. Then the following statements hold.
	\begin{itemize}
		\item[(1)] If $(\la',\mu',\mbf c')=(0,-2\sum\limits_{i=1}^k\epsilon_{n+1-i}-\sum\limits_{i=k+1}^n\epsilon_{n+1-i},\mbf 0) $ for some $k$ with $0\leq k \leq n-1$. Then $[T_W(\lambda,\mu,\mbf c):\Delta_W(\la',\mu',\mbf c')] \neq 0 $ if and only if $\la=0$, $\mbf c=\mbf 0$ and  $\mu=\mu'$ or $\mu=\mu'-\epsilon_{n-k}$. Moreover, $[T_W(0,\mu',\mbf 0):\Delta_W(0,\mu',\mbf 0)]=[T_W(0,\mu'-\epsilon_{n-k}, \mbf 0):\Delta_W(0,\mu',\mbf 0)]=1$.
		\item[(2)] If $(\la',\mu',\mbf c')\neq (0,-2\sum\limits_{i=1}^k\epsilon_{n+1-i}-\sum\limits_{i=k+1}^n\epsilon_{n+1-i},\mbf 0) $ for some $k$ with $0\leq k \leq n-1$. Then $[T_W(\lambda,\mu,\mbf c):\Delta_W(\la',\mu',\mbf c')] \neq 0 $ if and only if $\la=\la'$, $\mbf c=\mbf c'$ and  $\mu=\mu'$. Moreover, $[T_W(\la,\mu,\mbf c):\Delta_W(\la,\mu,\mbf c)]=1$.
		
	\end{itemize}
\end{proposition}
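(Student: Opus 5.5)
The plan is to reduce everything to Proposition \ref{soegrel formula} and then read off composition multiplicities of the modules $\mathcal V_W$ from Theorem \ref{Thm irr thm} and Proposition \ref{pro V(0,mu,0) red}.

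\textbf{Step 1: the parameter map.} For $X_n=W_n$, Lemma \ref{semi-inf} gives $\mcal E_W|_{\mf h_W}=\mu_n=\sum_{i=1}^n\epsilon_i$. Introduce
$$\Theta(\la,\mu,\mbf c)=(-\omega_{\mf g}\la,\,-\omega_W\mu-\mu_n,\,-\mbf c).$$
Proposition \ref{soegrel formula} then reads
$$[T_W(\la,\mu,\mbf c):\Delta_W(\la',\mu',\mbf c')]=[\mcal V_W(\Theta(\la',\mu',\mbf c')):L_W(\Theta(\la,\mu,\mbf c))].$$
The map $\Theta$ is an injection, indeed a bijection of $\La^+_{\mf g,W,\C^n}$. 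It comes from dualizing $L^0$ and then twisting by a character of $\mf{gl}_n$, so it preserves dominant integrality. Moreover, $\omega_{\mf g}$ and $\omega_W$ are involutions, so $\Theta$ can be inverted explicitly. Finally, irreducibles $L_W$ with distinct parameters are non-isomorphic by Proposition \ref{lem1}(3), so multiplicities may be compared parameter by parameter.

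\textbf{Step 2: which $\mu'$ are exceptional.} Using Remark \ref{weyl act}, $\omega_W(\epsilon_i)=\epsilon_{n+1-i}$, compute $\Theta$ on the weights in (1). For
$$\mu'=-2\sum_{i=1}^k\epsilon_{n+1-i}-\sum_{i=k+1}^n\epsilon_{n+1-i}$$
one gets
$$-\omega_W\mu'=2\sum_{i\le k}\epsilon_i+\sum_{i>k}\epsilon_i,$$
hence $-\omega_W\mu'-\mu_n=\mu_k$. Therefore $\Theta(0,\mu',\mbf 0)=(0,\mu_k,\mbf 0)$. As $k$ runs over $0\le k\le n-1$, these are exactly the exceptional parameters with $k\neq n$. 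Conversely, $\Theta(\la',\mu',\mbf c')$ is such a parameter only for these $(\la',\mu',\mbf c')$.

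\textbf{Step 3: case (1).} Here $\mcal V_W(0,\mu_k,\mbf 0)$ with $k\le n-1$ has exactly the two composition factors $L_W(0,\mu_k,\mbf 0)$ and $L_W(0,\mu_{k+1},\mbf 0)$, each with multiplicity one, by Proposition \ref{pro V(0,mu,0) red}(2). So the multiplicity is nonzero (and then equal to $1$) iff $\Theta(\la,\mu,\mbf c)$ is one of these two parameters.
\begin{itemize}
\item $\Theta(\la,\mu,\mbf c)=(0,\mu_k,\mbf 0)$ forces $\la=0$, $\mbf c=\mbf 0$, $\mu=\mu'$.
\item $\Theta(\la,\mu,\mbf c)=(0,\mu_{k+1},\mbf 0)$ gives $-\omega_W\mu=-\omega_W\mu'+\epsilon_{k+1}$, i.e. $\mu=\mu'-\omega_W(\epsilon_{k+1})=\mu'-\epsilon_{n-k}$, again with $\la=0$, $\mbf c=\mbf 0$.
\end{itemize}

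\textbf{Step 4: case (2).} Now $\Theta(\la',\mu',\mbf c')$ is either non-exceptional or equals $(0,\mu_n,\mbf 0)$. In both situations $\mcal V_W(\Theta(\la',\mu',\mbf c'))$ is irreducible by Theorem \ref{Thm irr thm}(1),(2). By Proposition \ref{lem: simple socles} it is therefore isomorphic to $L_W(\Theta(\la',\mu',\mbf c'))$. The multiplicity is then $1$ if $\Theta(\la,\mu,\mbf c)=\Theta(\la',\mu',\mbf c')$ and $0$ otherwise, and injectivity of $\Theta$ gives the claim.

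\textbf{Main obstacle.} The only delicate point is the bookkeeping in Step 2: tracking the sign conventions for $\omega_W$ and the shift by $\mcal E_W|_{\mf h_W}=\mu_n$ so that the exceptional weights match exactly. In particular, one must check that $k=n$ correctly falls into case (2). The remaining steps are direct citations.
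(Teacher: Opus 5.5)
Your proposal is correct and follows the paper's own route. Like the paper, you apply Proposition \ref{soegrel formula} with $\mcal E_W|_{\mf h_W}=\mu_n$, identify exactly which $\mu'$ make $\mathcal V_W(-\omega_{\mf g}\la',-\omega_W\mu'-\mu_n,-\mbf c')$ one of the reducible modules $\mcal V_W(0,\mu_k,\mbf 0)$ with $0\le k\le n-1$, read off its two multiplicity-one composition factors from Proposition \ref{pro V(0,mu,0) red}, and otherwise use irreducibility. Your explicit checks that $\Theta$ is injective and that $k=n$ falls into case (2), together with your correct formula for $\mu'$ (the paper's proof has the typo $\epsilon_{k+1-i}$), simply spell out steps the paper leaves implicit.
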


\begin{proof}
	By Proposition \ref{soegrel formula} and Lemma  \ref{semi-inf} we have,
	 $$[T_W(\lambda,\mu,\mbf c):\Delta_W(\la',\mu',\mbf c')]=[\mathcal{V}_W(-\omega_{\mf g} \la',-\omega_{W}\mu'-\mu_n, -\mbf {c'}):
	L_W(-\omega_{\mf g} \la,-\omega_{W}\mu-\mu_n, -\mbf {c})].$$
	Now we claim that when $\mathcal{V}_W(-\omega_{\mf g} \la',-\omega_{W}\mu'-\mu_n, -\mbf {c'})$ not simple, then $(\la',\mu',\mbf c') $ satisfy the hypothesis of (1). Suppose $\mathcal{V}_W(-\omega_{\mf g} \la',-\omega_{W}\mu'-\mu_n, -\mbf {c'})$ not simple. Then by Proposition \ref{pro V(0,mu,0) red}, we have $\la' =0, \mbf c'=\mbf 0$ and $\mu'$ satisfy the relation $-\omega_W \mu'-\mu_n=\mu_k=\epsilon_1 \cdots + \epsilon_k,$ for some $0 \leq k \leq n-1$. From this we have $\mu'=-2\sum\limits_{i=1}^k\epsilon_{k+1-i}-\sum\limits_{i=k+1}^n\epsilon_{n+1-i}$, this proves the claim.\\
	 Again from Proposition \ref{pro V(0,mu,0) red}, we have $\mcal V_W(0, \mu_k,\mbf 0)$ have two composition factors $L_W(0,\mu_k,\mbf 0)$ and $L_W(0,\mu_{k+1},\mbf 0)$. Hence we get $\la=0, \mbf c=0$ and $-\omega_W\mu-\mu_n = \mu_k $ or $\mu_{k+1}$, i.e $\mu =\mu'$ or $\mu =\mu'-\epsilon_{n-k}. $ Now the result (1) follows from Proposition \ref{pro V(0,mu,0) red}.\\
	From the proof of (1) it is clear that when $(\la',\mu',\mbf c')$ satisfied the condition of (2), then $\mathcal{V}_W(-\omega_{\mf g} \la',-\omega_{W}\mu'-\mu_n, -\mbf {c'})$ is simple. Hence it has only one composition factor, therefore we have $\la'=\la, \mu'=\mu, \mbf c'=\mbf c$. This completes the proof of (2).
\end{proof}
By similar arguments of Proposition \ref{prop mult Delta(lam)} we have the following proposition. 
 \begin{proposition}\label{prop mult Delta(lam)}
 	Let $(\lambda',\mu',\mbf c'),$ $(\lambda,\mu,\mbf c)\in\Lambda^+_{\mf g,X} \times \C^n$, for $X_n=S_n $ or $H_n$. Then the following statements hold.
 	\begin{itemize}
 		\item[(1)] If $(\la',\mu',\mbf c')=(0,-\sum\limits_{i=1}^k\epsilon_{n+1-i},\mbf 0) \in\Lambda^+_{\mf g,S} \times \C^n$ for some $k$ with $0\leq k \leq n-1$. Then $[T_S(\lambda,\mu,\mbf c):\Delta_S(\la',\mu',\mbf c')] \neq 0 $ if and only if $\la=0$, $\mbf c=\mbf 0$ and  $\mu=\mu'$ or $\mu=\mu'-\epsilon_{n-k}$. Moreover, $[T_S(0,\mu',\mbf 0):\Delta_S(0,\mu',\mbf 0)]=[T_S(0,\mu'-\epsilon_{n-k}, \mbf 0):\Delta_S(0,\mu',\mbf 0)]=1$.\\
 		
 		\item[(2)] If $(\la',\mu',\mbf c')\neq (0,-\sum\limits_{i=1}^k\epsilon_{n+1-i},\mbf 0) \in\Lambda^+_{\mf g,S} \times \C^n$ for some $k$ with $0\leq k \leq n-1$. Then $[T_S(\lambda,\mu,\mbf c):\Delta_S(\la',\mu',\mbf c')] \neq 0 $ if and only if $\la=\la'$, $\mbf c=\mbf c'$ and  $\mu=\mu'$. Moreover, $[T_S(\la,\mu,\mbf c):\Delta_S(\la,\mu,\mbf c)]=1$.
 			\item[(3)] If $(\la',\mu',\mbf c')=(0,\mu_k,\mbf 0) \in\Lambda^+_{\mf g,H} \times \C^n $ for some $k$ with $0\leq k \leq m$. Then $[T_H(\lambda,\mu,\mbf c):\Delta_H(\la',\mu',\mbf c')] \neq 0 $ if and only if $\la=0$, $\mbf c=\mbf 0$ and $\mu=\mu'-\epsilon_{k}$ or $\mu=\mu'$ or $\mu=\mu'+\epsilon_{k+1}$. Moreover, $[T_H(0,\mu'-\epsilon_k,\mbf 0):\Delta_H(0,\mu',\mbf 0)]=[T_H(0,\mu'+\epsilon_{k+1}, \mbf 0):\Delta_H(0,\mu',\mbf 0)]=1$ and $[T_H(0,\mu', \mbf 0):\Delta_H(0,\mu',\mbf 0)]=2$\\
 		
 		\item[(4)] If $(\la',\mu',\mbf c')\neq (0,\mu_k,\mbf 0) \in\Lambda^+_{\mf g,H} \times \C^n$ for some $k$ with $0\leq k \leq m$. Then $[T_H(\lambda,\mu,\mbf c):\Delta_H(\la',\mu',\mbf c')] \neq 0 $ if and only if $\la=\la'$, $\mbf c=\mbf c'$ and  $\mu=\mu'$. Moreover, $[T_H(\la,\mu,\mbf c):\Delta_H(\la,\mu,\mbf c)]=1$.
 		
 	\end{itemize}
 \end{proposition}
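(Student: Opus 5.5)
The plan is to argue exactly as for the $W_n$ case, using Proposition \ref{soegrel formula} together with Lemma \ref{semi-inf}(2). For $X_n=S_n,H_n$ the semi-infinite character vanishes, so $\mcal E_X|_{\mf h_X}=0$ and the reciprocity reads
$$[T_X(\lambda,\mu,\mbf c):\Delta_X(\la',\mu',\mbf c')]=[\mathcal{V}_X(-\omega_{\mf g}\la',-\omega_X\mu',-\mbf c'):L_X(-\omega_{\mf g}\la,-\omega_X\mu,-\mbf c)].$$
Everything then reduces to knowing the composition factors of the Shen--Larsson modules. By Theorem \ref{Thm irr thm} these modules are simple unless they are exceptional, and the exceptional ones are described by Propositions \ref{pro V(0,mu,0) S} and \ref{pro V(0,mu,0) H}.

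\textbf{Step 1: the exceptional case.} Determine when $\mathcal V_X(-\omega_{\mf g}\la',-\omega_X\mu',-\mbf c')$ is exceptional. This happens exactly when $\la'=0$, $\mbf c'=\mbf 0$ and $-\omega_X\mu'=\mu_k$. Remark \ref{weyl act} then gives the value of $\mu'$ in each case.
\begin{itemize}
\item For $S_n$ we have $\omega_S\epsilon_i=\epsilon_{n+1-i}$, so $\mu'=-\sum_{i=1}^k\epsilon_{n+1-i}$ with $0\le k\le n-1$.
\item For $H_n$ we have $\omega_H=-1$, so $\mu'=\mu_k$ with $0\le k\le m$.
\end{itemize}
These are precisely the hypotheses of (1) and (3).

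\textbf{Step 2: reading off multiplicities in the exceptional case.}
\begin{itemize}
\item For $S_n$, Proposition \ref{pro V(0,mu,0) S} says the composition factors are $L_S(0,\mu_k,\mbf 0)$ and $L_S(0,\mu_{k+1},\mbf 0)$, each with multiplicity one. Hence $\la=0$, $\mbf c=\mbf 0$, and $-\omega_S\mu$ equals $\mu_k$ or $\mu_{k+1}$. Applying $-\omega_S$ gives $\mu=\mu'$ or $\mu=\mu'-\epsilon_{n-k}$, both with multiplicity one.
\item For $H_n$, Proposition \ref{pro V(0,mu,0) H} says the factors are $L_H(0,\mu_{k\mp1},\mbf 0)$ with multiplicity one and $L_H(0,\mu_k,\mbf 0)$ with multiplicity two. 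Since $-\omega_H=\mathrm{id}$, this gives $\mu\in\{\mu'-\epsilon_k,\mu',\mu'+\epsilon_{k+1}\}$ with multiplicities $1,2,1$. When $k=0$ or $k=m$ the boundary terms are absent, by the convention $L_H(0,\mu_{-1},\mbf 0)=L_H(0,\mu_{m+1},\mbf 0)=0$.
\end{itemize}

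\textbf{Step 3: the non-exceptional case, (2) and (4).} Here $\mathcal V_X(-\omega_{\mf g}\la',-\omega_X\mu',-\mbf c')$ is irreducible by Theorem \ref{Thm irr thm}. By Proposition \ref{description of socle} it is therefore isomorphic to $L_X(-\omega_{\mf g}\la',-\omega_X\mu',-\mbf c')$. Nonzero multiplicity then forces equality of the parameters, and the multiplicity is $1$. Injectivity of $\nu\mapsto-\omega\nu$ and $\mbf c\mapsto-\mbf c$ yields $(\la,\mu,\mbf c)=(\la',\mu',\mbf c')$.

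\textbf{Main obstacle.} The delicate point is the bookkeeping in Steps 1 and 2. One must check that the resulting $\mu'$ and $\mu$ are honest dominant weights for $(X_n)_0$, which is why the statement says they lie in $\Lambda^+_{\mf g,S}\times\C^n$. For $\mf{sl}_n$ one must also interpret $\epsilon_i$ modulo the trace, so that $\mu_k$ and $\mu'-\epsilon_{n-k}$ are compared correctly on $\mf h_S$. The other place needing care is the $H_n$ boundary cases $k=0,m$.
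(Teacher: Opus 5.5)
Your route is the paper's route. The paper proves this statement only by saying the $W_n$ argument carries over: Soergel reciprocity (Proposition~\ref{soegrel formula}) with $\mcal E_X=0$, irreducibility of non-exceptional $\mcal V_X$ from Theorem~\ref{Thm irr thm}, and composition factors of exceptional modules read off from Propositions~\ref{pro V(0,mu,0) S} and \ref{pro V(0,mu,0) H}. Steps 1 and 3 are fine. However, the boundary cases you list as the ``main obstacle'' are not bookkeeping. At those values the propositions you quote are false, so Step 2, and the statement itself, fails there. This is a defect you share with the paper's proof.

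\textbf{$S_n$, $k=n-1$.} On $\mf h_S$ we have $\mu_n=\mu_0=0$. Proposition~\ref{pro V(0,mu,0) S} would then say that $\mcal V_S(0,\mu_{n-1},\mbf 0)$ modulo its socle is one-dimensional. But $\mcal V_S(0,\mu_{n-1},\mbf 0)$ is the module $\Omega^{n-1}$ of polynomial $(n-1)$-forms restricted to $S_n$.
\begin{enumerate}
\item Its socle is generated by constant forms, so it lies in $\ker d$.
\item The map $d\colon\Omega^{n-1}\to\Omega^n$ is a surjective module map.
\item $\Omega^n\cong A_n$ as $S_n$-modules, since divergence-free fields preserve the volume form.
\item $A_n$ has composition factors $L_S(0,\mu_0,\mbf 0)$ and $A_n/\C\cong dA_n=L_S(0,\mu_1,\mbf 0)$, up to a shift of depth.
\end{enumerate}
So the factors are $L_S(0,\mu_{n-1},\mbf 0)$, $L_S(0,\mu_0,\mbf 0)$ and $L_S(0,\mu_1,\mbf 0)$. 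For $n\ge 3$, reciprocity gives an extra nonzero multiplicity at $\mu=-\epsilon_n$, which is neither $\mu'$ nor $\mu'-\epsilon_1\equiv 0$. For $n=2$ it gives $[T_S(0,\mu',\mbf 0):\Delta_S(0,\mu',\mbf 0)]=2$, not $1$.

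\textbf{$H_n$, $k=0$.} Here $\mcal V_H(0,\mu_0,\mbf 0)=A_n$ with the Poisson action. Since $A_n/\C\cong H_n$ is simple, $[\mcal V_H(0,\mu_0,\mbf 0):L_H(0,\mu_0,\mbf 0)]=1$. Hence the asserted $[T_H(0,\mu_0,\mbf 0):\Delta_H(0,\mu_0,\mbf 0)]=2$ is wrong.

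Your argument is valid exactly where the quoted composition-factor statements hold. At these boundary values you must recompute the factors directly from the de Rham complex rather than cite them, and the statement needs to be corrected accordingly.
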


The following theorem is a consequence of Proposition \ref{prop mult Delta(lam)} and Proposition \ref{soegrel formula} and equation (4.1).
\begin{theorem}
	The following statements hold. 
	\begin{itemize}
		\item[(1)] If $(\la,\mu,\mbf c)=(0,-2\sum\limits_{i=1}^k\epsilon_{n+1-i}-\sum\limits_{i=k+1}^n\epsilon_{n+1-i},\mbf 0) \in\Lambda^+_{\mf g,W} \times \C^n $ for some $k$ with $0\leq k \leq n-1$. Then 
		$$\ch(T_W(0,\mu,\mbf 0))=\Upsilon_W(\ch(L^0_W(0,\mu,\mbf 0))+\ch(L^0_W(0, \mu+\epsilon_{n-k},\mbf 0))) .$$
		
		\item[(2)]	If $(\la,\mu,\mbf c)\neq (0,\sum\limits_{i=1}^k\epsilon_{n+1-i}-\sum\limits_{i=k+1}^n\epsilon_{n+1-i},\mbf 0) \in\Lambda^+_{\mf g,W} \times \C^n $ for some $k$ with $0\leq k \leq n-1$. Then 
		$$\ch(T_W(\la,\mu,\mbf c))=\Upsilon_W(\ch(L^0_W(\la,\mu,\mbf c)).$$ 
			\item[(3)] If $(\la,\mu,\mbf c)=(0,-\sum\limits_{i=1}^k\epsilon_{n+1-i},\mbf 0)  \in\Lambda^+_{\mf g,S} \times \C^n $ for some $k$ with $0\leq k \leq n-1$. Then 
		$$\ch(T_S(0,\mu,\mbf 0))=\Upsilon_S(\ch(L^0_S(0,\mu,\mbf 0))+\ch(L^0_S(0, \mu+\epsilon_{n-k},\mbf 0))) .$$
		
		\item[(4)]	If $(\la,\mu,\mbf c)\neq (0,-\sum\limits_{i=1}^k\epsilon_{n+1-i},\mbf 0) \in\Lambda^+_{\mf g,S} \times \C^n $ for some $k$ with $0\leq k \leq n-1$. Then 
		$$\ch(T_S(\la,\mu,\mbf c))=\Upsilon_S(\ch(L^0_S(\la,\mu,\mbf c)).$$
		\item[(5)] If $(\la,\mu,\mbf c)=(0,\mu_k,\mbf 0)  \in\Lambda^+_{\mf g,H} \times \C^n $ for some $k$ with $0\leq k \leq m$. Then 
		$$\ch(T_H(0,\mu,\mbf 0))=\Upsilon_H(\ch(L^0_H(0,\mu+\epsilon_k,\mbf 0))+2\ch(L^0_H(0,\mu,\mbf 0))+\ch(L^0_H(0, \mu-\epsilon_{k+1},\mbf 0))) .$$
		
		\item[(6)]	If $(\la,\mu,\mbf c)\neq (0,\mu_k,\mbf 0) \in\Lambda^+_{\mf g,H} \times \C^n $ for some $k$ with $0\leq k \leq m$. Then 
		$$\ch(T_H(\la,\mu,\mbf c))=\Upsilon_H(\ch(L^0_H(\la,\mu,\mbf c)),$$
	\end{itemize}
where $\Upsilon_X =\ch(\Delta_X(\la,\mu,\mbf c))$.
\end{theorem}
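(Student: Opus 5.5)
The character of $T_X(\la,\mu,\mbf c)$ is read off from its $\Delta$-flag. Since characters are additive on short exact sequences (Lemma \ref{Lem Char}(1)), and $T_X(\la,\mu,\mbf c)$ is the union of a filtration whose subquotients are standard modules, we get
$$\ch(T_X(\la,\mu,\mbf c))=\sum_{(\la',\mu',\mbf c')}[T_X(\la,\mu,\mbf c):\Delta_X(\la',\mu',\mbf c')]\,\ch(\Delta_X(\la',\mu',\mbf c')).$$
This sum is finite: by Proposition \ref{soegrel formula}, each multiplicity is a composition multiplicity in some $\mcal V_X$. In Section 5.1 we showed $\ch(\Delta_X(\la',\mu',\mbf c'))=\Upsilon_X\,\ch(L^0_X(\la',\mu',\mbf c'))$. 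So everything reduces to listing, for fixed $(\la,\mu,\mbf c)$, all $(\la',\mu',\mbf c')$ with nonzero multiplicity, together with the multiplicities. This inverts Proposition \ref{prop mult Delta(lam)}, which fixes $\Delta(\la',\mu',\mbf c')$ and lets $T$ vary.

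\textbf{Generic case.} Take $(\la,\mu,\mbf c)$ not of the exceptional shape in (1), (3) or (5). Then any $(\la',\mu',\mbf c')$ with nonzero multiplicity falls under part (2) or (4) of Proposition \ref{prop mult Delta(lam)}, which forces $(\la',\mu',\mbf c')=(\la,\mu,\mbf c)$ with multiplicity $1$. We must also rule out an exceptional $(\la',\mu',\mbf c')$ contributing. By part (1) or (3), such a $\Delta$ appears in $T_X(\la,\mu,\mbf c)$ only if $\la=0$, $\mbf c=\mbf 0$ and $\mu$ is $\mu'$ or a specified neighbour of $\mu'$. Here we must use the hypothesis: the neighbour weights $\mu'-\epsilon_{n-k}$ must be shown to be excluded by the hypothesis of (2), (4) or (6). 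This gives $\ch(T_X)=\Upsilon_X\ch(L^0_X(\la,\mu,\mbf c))$.

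\textbf{Exceptional case.} Take $\mu$ exceptional, as in (1), (3) or (5). Collect every $\mu'$ with $\mu=\mu'$ or $\mu=\mu'-\epsilon_{n-k}$ (types $W$, $S$), or $\mu\in\{\mu'-\epsilon_k,\mu',\mu'+\epsilon_{k+1}\}$ (type $H$). This gives $\mu'=\mu$ and $\mu'=\mu+\epsilon_{n-k}$, with multiplicity $1$ each, in types $W$ and $S$. In type $H$ it gives $\mu'=\mu+\epsilon_k$, $\mu'=\mu$ (multiplicity $2$) and $\mu'=\mu-\epsilon_{k+1}$. We then check that each such $\mu'$ is again dominant integral and of exceptional form. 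Where it fails to be, for example at the boundary $k=0$, $k=n-1$ or $k=m$, that term drops out or is governed by the generic clause. Substituting $\ch(\Delta)=\Upsilon_X\ch(L^0_X)$ gives the stated formulas.

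\textbf{Main obstacle.} The main work is the index bookkeeping. We must check that $\mu+\epsilon_{n-k}$ has the form $-2\sum_{i=1}^{k'}\epsilon_{n+1-i}-\sum_{i>k'}\epsilon_{n+1-i}$ for the appropriate $k'$ (here $k'=k-1$, since adding $\epsilon_{n-k}$ moves one coefficient from $-2$ to $-1$). We must also track $\omega_X$ (Remark \ref{weyl act}) and the shift $\mcal E_X|_{\mf h_X}$ (Lemma \ref{semi-inf}) consistently, since these decide which neighbours occur. I would redo this translation directly from Proposition \ref{soegrel formula}, Proposition \ref{pro V(0,mu,0) red}, Proposition \ref{pro V(0,mu,0) S} and Proposition \ref{pro V(0,mu,0) H}, treating each type separately, to confirm the signs of the $\epsilon$-shifts appearing in (1), (3) and (5).
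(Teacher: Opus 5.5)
Your route is the paper's route, and it is the right one. The paper's entire proof is that the theorem follows from the $\Delta$-multiplicity propositions, Proposition~\ref{soegrel formula}, and $\mathrm{ch}(\Delta_X)=\Upsilon_X\,\mathrm{ch}(L^0_X)$. However, the one step with real content, inverting the multiplicity proposition, is done wrongly in your proposal, and the ``index bookkeeping'' you defer does not come out as you assert. Set $\nu_k=-(\epsilon_1+\cdots+\epsilon_n)-(\epsilon_{n+1-k}+\cdots+\epsilon_n)$ for $0\le k\le n$; the weights in (1) are then $\nu_0,\dots,\nu_{n-1}$. In $\nu_k$ the coefficient of $\epsilon_{n-k}$ is $-1$ (it is the term $i=k+1$), not $-2$. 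Hence $\nu_k-\epsilon_{n-k}=\nu_{k+1}$, while $\nu_k+\epsilon_{n-k}$ has entry $0$ at position $n-k$ and $-1$ at position $n-k-1$. So for $k\le n-2$ it is not even dominant, and $L^0_W(0,\nu_k+\epsilon_{n-k},\mathbf{0})$ is undefined. For $k=n-1$ it equals $(0,-2,\dots,-2)$, which is not exceptional. Your justification ``adding $\epsilon_{n-k}$ moves one coefficient from $-2$ to $-1$'' is therefore false. The slip comes from the fact that in the multiplicity proposition the index $k$ belongs to the \emph{standard} module: $\Delta_W(0,\nu_k,\mathbf{0})$ occurs in $T_W(0,\nu_k,\mathbf{0})$ and in $T_W(0,\nu_{k+1},\mathbf{0})$. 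If you fix the tilting weight $\mu=\nu_k$ instead, the second subquotient is $\Delta_W(0,\nu_{k-1},\mathbf{0})$ with $\nu_{k-1}=\nu_k+\epsilon_{n+1-k}$, and it exists only for $1\le k\le n$. For example, with $n=2$ you get $\nu_0=(-1,-1)$, $\nu_1=(-1,-2)$, $\nu_2=(-2,-2)$, and $T_W(0,\nu_1,\mathbf{0})$ has flag $\Delta(\nu_1),\Delta(\nu_0)$, not $\Delta(\nu_1),\Delta((0,-2))$.

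This breaks both halves of your argument.
\begin{itemize}
\item In the exceptional case you get the wrong second term. At $k=0$ there is no second term at all: $T_W(0,\nu_0,\mathbf{0})=\Delta_W(0,\nu_0,\mathbf{0})$. So ``that term drops out'' contradicts formula (1) rather than proving it.
\item In the generic case, the step ``the neighbour weights must be shown to be excluded by the hypothesis of (2)'' cannot be carried out. The neighbours are $\nu_1,\dots,\nu_n$, and $\nu_n=-2(\epsilon_1+\cdots+\epsilon_n)$ is dominant and not excluded by (2). (The hypothesis of (2) as printed also drops the factor $-2$.) Yet $T_W(0,\nu_n,\mathbf{0})$ contains $\Delta_W(0,\nu_{n-1},\mathbf{0})$.
\item Your type $H$ list has the same conflation. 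Since $\omega_H=-1$ and $\mathcal{E}_H=0$, $\Delta_H(0,\mu_j,\mathbf{0})$ occurs in $T_H(0,\mu_k,\mathbf{0})$ exactly for $j=k-1,k,k+1$. So the extra terms are $\mu_k+\epsilon_{k+1}$ and $\mu_k-\epsilon_k$, not $\mu_k+\epsilon_k$ and $\mu_k-\epsilon_{k+1}$, which are generally non-dominant.
\item In type $S$ the second term is likewise $\mu+\epsilon_{n+1-k}$.
\end{itemize}
Carried out correctly from the paper's own propositions, your method proves a re-indexed statement. For $1\le k\le n$, $\mathrm{ch}\,T_W(0,\nu_k,\mathbf{0})=\Upsilon_W\bigl(\mathrm{ch}\,L^0_W(0,\nu_k,\mathbf{0})+\mathrm{ch}\,L^0_W(0,\nu_k+\epsilon_{n+1-k},\mathbf{0})\bigr)$, and otherwise $\mathrm{ch}\,T_W=\Upsilon_W\,\mathrm{ch}\,L^0_W$; types $S$ and $H$ change analogously. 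It does not prove formulas (1)--(6) as printed. Your proposal reaches the printed indices only through the erroneous inversion.
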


\vspace{5cm} 
{\bf Acknowledgments:} 
The author would to like to thank Institute postdoctoral Fellowship of IISc Bangalore for funding this research.

\end{document}